\newcommand{\Rthree}{(\mathbb{R}^{3},\xi_{std})}
\newcommand{\Sthree}{(S^{3},\xi_{std})}
\newcommand{\SOT}{(S^{3},\xi_{OT})}
\newcommand{\Mxi}{(M,\xi)}
\newcommand{\AOB}{(\Sigma,\Phi)}
\newcommand{\OBOT}{(S^{1}\times[0,1], D^{-}_{S^{1}\times\lbrace\frac{1}{2}\rbrace})}
\newcommand{\disk}{\mathbb{D}}
\newcommand{\rsa}{\rightsquigarrow}
\newcommand{\be}{\begin{enumerate}}
\newcommand{\ee}{\end{enumerate}}
\newtheorem{thm}{Theorem}[section]
\newtheorem{prop}[thm]{Proposition}
\newtheorem{defn}[thm]{Definition}
\newtheorem{lemma}[thm]{Lemma}
\newtheorem{cor}[thm]{Corollary}
\newtheorem{rmk}[thm]{Remark}
\begin{document}

\title{Contact surgery and supporting open books}

\author{Russell Avdek}
\address{University of Southern California, Los Angeles, CA 90089}
\email{avdek@usc.edu}
\date{This version: May, 2011}

\begin{abstract}
Let $\Mxi$ be a contact 3-manifold.  We present two new algorithms, the first of which converts an open book $(\Sigma,\Phi)$ supporting $\Mxi$ with connected binding into a contact surgery diagram.  The second turns a contact surgery diagram for $\Mxi$ into a supporting open book decomposition.  These constructions lead to a refinement of a result of Ding-Geiges \cite{DG:LWall}, which states that every such $\Mxi$ may be obtained by contact surgery from $\Sthree$, as well as bounds on the support norm and genus \cite{EtOzb:Support} of contact manifolds obtained by surgery in terms of classical link data.  We then introduce Kirby moves called ribbon moves which use mapping class relations to modify contact surgery diagrams.  Any two surgery diagrams of the same contact 3-manifold are related by a sequence of Legendrian isotopies and ribbon moves.  As most of our results are computational in nature, a number of examples are analyzed.
\end{abstract}

\maketitle


\section{Introduction}

\subsection{Open books and contact structures}

A \emph{contact structure} on an oriented 3-manifold $M$ is a hyperplane distribution $\xi$ for which there exists a globally defined one-form $\alpha$ satisfying $\xi =$ Ker$(\alpha)$ and $\alpha\wedge d\alpha >0$ with respect to the prescribed orientation on $M$.  In this paper, we will consider two contact manifolds $\Mxi$ and $(N,\zeta)$ to be equivalent if they are diffeomorphic, i.e. if there is a diffeomorphism $\Psi:M\rightarrow N$ for which $T\Psi(\xi)=\zeta$.

An \emph{open book decomposition} of a closed, oriented 3-manifold $M$ is a pair $(B,\pi)$ consisting of an oriented link $B\subset M$ (called the \emph{binding}) and a fibration $M\setminus B \xrightarrow{\pi} S^{1}$.  The preimage of a point on the circle gives an oriented surface with boundary called the \emph{page} of the decomposition.  Similarly, provided a compact oriented surface with non-empty boundary $\Sigma$ and a diffeomorphism $\Phi:\Sigma\rightarrow\Sigma$ which restricts to the identity on a neighborhood of $\partial\Sigma$, the \emph{abstract open book} associated to the pair $(\Sigma,\Phi)$ is the 3-manifold
\begin{equation*}
\begin{gathered}
M_{\AOB}=(\Sigma\times[0,1]) / \sim,\quad \text{where}\\
(x,1)\sim(\Phi(x),0)\,\, \forall x\in\Sigma \quad\text{and}\quad (x,\theta)\sim(x,\theta') \,\,\forall x\in\partial\Sigma;\,\,\theta,\theta'\in[0,1].
\end{gathered}
\end{equation*}
Every abstract open book $\AOB$ admits the open book decomposition $(B,\pi)=(\partial\Sigma,(x,\theta)\mapsto\theta)$ and every open book decomposition determines an abstract open book (up to conjugation of the diffeomorphism $\Phi$).  Accordingly, we will refer to either structure simply as an open book unless an isotopy class for the binding is specified and will use abstract open book notation unless otherwise specified.

Following Giroux \cite{Giroux:ContactOB}, we say that an open book $\AOB$ \emph{supports} or \emph{is compatible with}  $\Mxi$ if
\be
\item $M=M_{\AOB}$, and
\item there is a contact 1-form $\alpha$ for $\xi=\text{Ker}(\alpha)$ which is a positive length element on the binding and such that the Reeb vector field $R_{\alpha}$ is transverse to the interiors of all the pages.
\ee
In \cite{ThWi:OB} Thurston and Winkelnkemper showed that every open book decomposition $\AOB$ gives rise to a compatible contact manifold $(M_{\AOB},\xi_{\AOB})$ which depends only on $\Sigma$ and the conjugacy class of $\Phi$.  The following theorem asserts that all contact 3-manifolds arise in this way.

\begin{thm}[\cite{Giroux:ContactOB}]\label{Thm:GirCor}
Let $M$ be a closed, oriented 3-manifold.  Then,
\be
\item $\Mxi$ is supported by some $\AOB$, and
\item the mapping $\AOB\mapsto (M_{\AOB},\xi_{\AOB})$ determines a one-to-one correspondence between
    \be
    \item isotopy classes of positive contact structures on $M$ and
    \item isotopy classes of open book decompositions of $M$ up to positive stabilization.
    \ee
\ee
\end{thm}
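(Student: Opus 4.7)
The plan is to treat part (1) and part (2) separately, building both on the notion of a \emph{contact cell decomposition} of $(M,\xi)$: a CW structure on $M$ in which every 1-cell is Legendrian, every 2-cell is convex with Legendrian boundary of Thurston--Bennequin number $-1$, and every 3-cell is a Darboux ball. Existence of such a decomposition follows from starting with any smooth triangulation, Legendrian-realizing its 1-skeleton using Darboux charts, and then perturbing the 2-cells to be convex while adjusting the $tb$ of each cell's boundary to be $-1$ by introducing zigzags.

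For part (1), given a contact cell decomposition with 1-skeleton $K$, I would let $\Sigma$ be a ribbon of $K$ in a convex neighborhood $H_{1}$ of $K$, i.e.\ a compact surface that deformation retracts to $K$ and meets $\xi$ transversally along $\partial\Sigma$. The $tb=-1$ condition on each 2-cell forces the complement $H_{2}=M\setminus H_{1}$ to be a standard neighborhood of a graph on which the dividing set behaves correctly; convex surface theory then gives a product structure $\Sigma\times[1/2,1]/\sim$ on $H_{2}$ compatible with $\xi$, and hence a monodromy $\Phi$ of a page. Choosing a contact form adapted to this product structure makes $R_{\alpha}$ transverse to interiors of pages and positive on $\partial\Sigma$, so $(\Sigma,\Phi)$ supports $(M,\xi)$.

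For part (2), the Thurston--Winkelnkemper construction shows the map $(\Sigma,\Phi)\mapsto(M_{(\Sigma,\Phi)},\xi_{(\Sigma,\Phi)})$ is well-defined on conjugacy classes and invariant under positive stabilization (a positive stabilization may be realized as attaching a canceling pair of handles supported in a Darboux ball). Surjectivity is part (1). For injectivity, the idea is to show that any two contact cell decompositions underlying supporting open books of the same $(M,\xi)$ are connected by a sequence of two elementary moves: subdividing a Legendrian 1-cell by inserting a vertex, and subdividing a convex 2-cell by a Legendrian arc with endpoints on its boundary. Each of these moves corresponds on the open book side to exactly one positive Hopf-band stabilization, so two supporting open books differ by a sequence of positive stabilizations and their inverses; using the common-stabilization trick one then arranges only positive stabilizations.

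The main obstacle is the injectivity statement in part (2). Producing a contact cell decomposition from an \emph{arbitrary} supporting open book — rather than just one built as above — requires fine control over Legendrian realization on the pages and a careful convex-surface argument to ensure the resulting 2-cells achieve $tb=-1$ after isotopy. Equally delicate is connecting any two such decompositions by the two elementary moves without leaving the Legendrian/convex category; this rests on a bypass-attachment analysis and is the technical heart of Giroux's argument.
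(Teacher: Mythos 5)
First, note that the paper does not prove this theorem: it is quoted from Giroux, and the only portion the paper reproduces is an outline of the existence statement (1), in Section \ref{Sec:GirBook}, via contact cell decompositions (Definition \ref{Def:ContactCell}) and Theorem \ref{Thm:GRibbon}. Your argument for part (1) follows essentially that same route --- Legendrian 1-skeleton, convex two-cells with $tb=-1$, the ribbon $\Sigma$, and a product structure on the complementary handlebody --- so in outline it agrees with the paper. Two technical corrections, though: adding zigzags (stabilization) \emph{decreases} $tb$, so it cannot be used to raise $tb(\partial\disk)$ up to $-1$; the correct move for a two-cell with $tb<-1$ is to subdivide it by Legendrian arcs (exactly Step 3 of the paper's Algorithm 2), while a sufficiently fine triangulation placing each cell in a Darboux ball is what guarantees $tb\le -1$ in the first place, via Bennequin's inequality. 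Also, Theorem \ref{Thm:GRibbon} requires, in addition to $tb(\partial\disk)=-1$, that each 2-cell meet $\partial\Sigma$ exactly twice; this condition must be arranged and verified, and your sketch omits it.

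The genuine gap is in part (2). Well-definedness and invariance under positive stabilization are fine, and surjectivity is part (1). But injectivity --- that any two open books supporting isotopic contact structures admit a common positive stabilization --- is the hard half of Giroux's theorem, and your proposal does not prove it: you name two elementary moves on contact cell decompositions and \emph{assert} that any two decompositions (and any two supporting open books) are related by them, which is precisely the statement requiring proof. As you acknowledge, this needs (i) a way to extract a contact cell decomposition from an arbitrary supporting open book with the required $tb$ and intersection properties, and (ii) a convex-surface/bypass argument connecting any two such decompositions through the elementary moves; neither is supplied. Since the paper only cites Giroux here and never uses more than part (1) together with stabilization-invariance, the fair summary is that your proposal reproduces the paper's sketch of (1) and the standard strategy for (2), but does not constitute a proof of (2).
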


In light of Theorem \ref{Thm:GirCor}, it is natural to ask how properties of surfaces and their diffeomorphisms translate into contact-geometric qualities.  Important progress has been made with the sobering arc criterion of Goodman \cite{Goodman:Sober} and the right-veering program of Honda-Kazez-Mati\'{c} \cite{HKM:RightVeering}.

\begin{thm} \label{Thm:GirouxStab}
Let $\Mxi$ be a contact 3-manifold.  Then the following are equivalent:
\be
\item $\xi$ is overtwisted.
\item $\Mxi$ admits a compatible open book decomposition which is a negative stabilization of some other open book decomposition for $M$.
\item $\Mxi$ has a supporting open book decomposition whose page contains a sobering arc.
\item $\Mxi$ is supported by an open book whose monodromy is not right-veering.
\ee
\end{thm}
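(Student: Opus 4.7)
The plan is to prove the four equivalences by establishing a short cycle of implications, relying on the two cited theorems for the hard directions into $(1)$ and on a single local computation for the implications out of $(2)$. Concretely, my plan is to establish $(2)\Rightarrow(3)$, $(2)\Rightarrow(4)$, $(3)\Rightarrow(1)$, $(4)\Rightarrow(1)$, and $(1)\Rightarrow(2)$; together these close all four equivalences.

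The easy implications $(2)\Rightarrow(3)$ and $(2)\Rightarrow(4)$ both fall out of the standard local picture of a negative stabilization. Writing the stabilized open book as $\Sigma = \Sigma' \cup h$ for the added $1$-handle, with monodromy $\Phi = \Phi' \circ \tau_c^{-1}$ where $c$ intersects the co-core arc $\alpha$ of $h$ exactly once, a direct picture in the Hopf band will show that $\Phi(\alpha)$ veers to the \emph{left} at both endpoints of $\alpha$, witnessing non-right-veering, and that $\alpha$ and $\Phi(\alpha)$ intersect in the minimal way required by Goodman's sobering inequality. These two verifications are essentially the same local computation applied to the same arc, so I would present them as one check.

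The main obstacle is $(1)\Rightarrow(2)$. My plan here is to follow the Giroux-Etnyre strategy. Given an overtwisted disk $D\subset\Mxi$, I would start with any supporting open book produced by Theorem \ref{Thm:GirCor} and positively stabilize until $D$ can be isotoped into a favorable position with $\partial D$ lying on a page and meeting the binding in a controlled way. The idea is then to arrange a neighborhood of $D$ inside the open book as a standard model whose page is, after further positive stabilizations which do not change $\xi$, exactly a negative Hopf band plumbed onto a larger surface. The delicate step will be pinning down the sign of the plumbing: overtwistedness of $\xi$ near $D$ should force the contact framing to disagree with the page framing in precisely the way that produces a \emph{negative} rather than positive Hopf band, and verifying this requires a careful framing computation along $\partial D$.

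The remaining implications $(3)\Rightarrow(1)$ and $(4)\Rightarrow(1)$ will be direct citations: $(3)\Rightarrow(1)$ is Goodman's sobering arc theorem \cite{Goodman:Sober}, and $(4)\Rightarrow(1)$ is the contrapositive of the Honda-Kazez-Mati\'c theorem \cite{HKM:RightVeering} that every tight contact 3-manifold is supported by an open book with right-veering monodromy. Combined with the chains $(2)\Rightarrow(3)\Rightarrow(1)$, $(2)\Rightarrow(4)\Rightarrow(1)$, and $(1)\Rightarrow(2)$, this closes the loop and establishes all four equivalences.
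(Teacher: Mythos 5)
First, a point of reference: the paper does not prove this theorem. It is stated in the introduction as a summary of known results, with $(1)\Leftrightarrow(2)$ due to Giroux, the sobering arc criterion $(3)$ due to Goodman \cite{Goodman:Sober}, and the right-veering criterion $(4)$ due to Honda--Kazez--Mati\'c \cite{HKM:RightVeering}; later sections (e.g.\ Section \ref{Sec:PosStab}) simply invoke it as a black box. So your proposal is measured against the literature rather than against an argument in the text. Your logical architecture closes all the equivalences, the citations for $(3)\Rightarrow(1)$ and $(4)\Rightarrow(1)$ are the correct ones, and the single local computation on the co-core of the negatively stabilized handle --- showing it is simultaneously a sobering arc and a witness to non-right-veering --- is standard and correct.

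The genuine gap is in $(1)\Rightarrow(2)$. The argument you sketch --- isotope the overtwisted disk $D$ into favorable position in a positively stabilized open book and then recognize a negative Hopf band summand near $D$ --- is not how this implication is proved, and it faces concrete obstructions you do not address. First, $\partial D$ has twisting $0$ relative to $D$, so $D$ can never be a $2$-cell of a contact cell decomposition (Theorem \ref{Thm:GRibbon} requires $tb=-1$); after the forced subdivision of $D$ the overtwistedness is no longer localized in the page, and there is no distinguished handle left to destabilize negatively. Second, a negative stabilization is a Murasugi sum with $\SOT$ and therefore \emph{changes} the homotopy class of the supported plane field (the $d_3$-type obstruction shifts by one), so the open book you are trying to exhibit must, before the negative Hopf band is attached, support a \emph{different} contact structure $\xi'$ in a shifted homotopy class; no purely local analysis near $D$ produces $\xi'$. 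The standard proof runs the other way: choose a plane field agreeing with $\xi$ over the $2$-skeleton whose three-dimensional obstruction is shifted by one, realize it by a contact structure $\xi'$, take any open book supporting $\xi'$, and negatively stabilize once. The result is overtwisted (by your $(2)\Rightarrow(3)\Rightarrow(1)$) and homotopic to $\xi$ as a plane field, hence isotopic to $\xi$ by Eliashberg's classification of overtwisted contact structures. Your proposal never invokes Eliashberg's theorem, and without that h-principle input the ``careful framing computation'' you defer is, in effect, the entire content of the implication.
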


Moreover, while every topological 3-manifold admits an open book decomposition with planar pages \cite{Alexander:OB}, not every contact 3-manifold is supported by a planar open book.

\begin{thm} [Etnyre \cite{Etnyre:Inter}] \label{Thm:Inter}
Let $\Mxi$ be a contact 3-manifold.
\be
\item  If $\Mxi$ is overtwisted, then it is supported by a planar open book decomposition.
\item  Suppose that $\Mxi$ is supported by a planar open book decomposition.  Then any symplectic filling $X$ for $\Mxi$ has connected boundary and is such that $b^{+}_{2}(X)=b^{0}_{2}(X)=0$.  If, in addition, $M$ is an integral homology sphere, then the intersection form for $X$ is diagonalizable.
\ee
\end{thm}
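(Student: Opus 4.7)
The plan is to handle the two parts separately, using Giroux's correspondence (Theorem~\ref{Thm:GirCor}) as the common backbone.

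For part (1), I would combine Alexander's planar open book theorem with Eliashberg's classification of overtwisted contact structures, which asserts that overtwisted contact structures on $M$ are isotopy-classified by their homotopy class of tangent plane field. The first step is to fix, by Alexander's theorem, a planar open book $(\Sigma_0,\Phi_0)$ of $M$, and let $\xi_0$ denote the contact structure it supports. The second step is to perform a sequence of planarity-preserving negative stabilizations: each such move attaches a $1$-handle between two \emph{distinct} boundary components of the current page---so that the Euler characteristic bookkeeping simply adds a new boundary circle and keeps the page planar---and composes the monodromy with a left Dehn twist along the new core curve. By Theorem~\ref{Thm:GirouxStab}, each negative stabilization produces an overtwisted contact structure, and it shifts the three-dimensional homotopy invariant $d_3$ by a quantity depending on the rotation number of a canonically associated Legendrian arc. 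The third step is to check that, by varying the stabilization data, every homotopy class of plane field on $M$ can be realized; Eliashberg's classification then identifies the resulting supported contact structure with $\xi$ up to isotopy.

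For part (2), the plan is to embed $X$ into a closed symplectic rational (or ruled) surface and extract the claimed constraints. First I would construct a symplectic cap $W$ for $\Mxi$ from the planar open book by attaching a symplectic $2$-handle along each binding component with the page framing, and then gluing on a piece $S^1\times D^3$ carrying a suitable symplectic form; because $\Sigma$ is planar, the pages cap off to symplectic $2$-spheres of self-intersection $0$. The resulting closed symplectic $4$-manifold $Z=X\cup_M W$ is foliated by these spheres, so by McDuff's theorem on symplectic $4$-manifolds containing a symplectic sphere of non-negative self-intersection, $Z$ is rational or ruled, and in particular $b_2^+(Z)=1$. A Mayer--Vietoris computation for $Z=X\cup_M W$ then completes the first claim: the sphere-fiber class is represented in $W$, so $b_2^+(W)\geq 1$, which forces $b_2^+(X)=b_2^0(X)=0$, i.e.\ the intersection form $Q_X$ is negative definite. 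The connectedness of $\partial X$ follows because the cap fills in all of $M$; any extra boundary component of $X$ would obstruct the construction of the closed manifold $Z$.

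For the final assertion, assume $M$ is an integral homology sphere. Then $H_2(M;\mathbb{Z})=0$, so Mayer--Vietoris gives $H_2(Z;\mathbb{Z})\cong H_2(X;\mathbb{Z})\oplus H_2(W;\mathbb{Z})$, with the intersection form splitting orthogonally as $Q_X\oplus Q_W$. Hence $Q_X$ is unimodular and negative definite, and occurs as a direct orthogonal summand of the diagonalizable rational-surface form $Q_Z$; Donaldson's diagonalization theorem (applied directly, or to a closed manifold obtained by capping $X$ with a rational homology ball bounding $M$) then forces $Q_X$ to be diagonal over $\mathbb{Z}$.

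The principal obstacle in part (2) is the symplectic cap construction itself: one must verify that the Weinstein $2$-handle attachments can be carried out compatibly with the supporting open book structure and that the resulting concave boundary genuinely admits a symplectic cap by $S^1\times D^3$. In part (1), the main technical point is the precise change in $d_3$ under each planarity-preserving negative stabilization and the fact that all homotopy classes of plane fields are thereby achieved. Everything else is a routine assembly of classical results.
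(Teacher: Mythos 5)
First, a caveat: the paper offers no proof of Theorem \ref{Thm:Inter}; it is imported wholesale from Etnyre \cite{Etnyre:Inter} and used as a black box (e.g.\ in Corollary \ref{Cor:PosTB}), so the only meaningful comparison is with Etnyre's published argument.

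Your part (1) has a genuine gap. A negative stabilization along \emph{any} choice of arc is a Murasugi sum with one fixed negative Hopf band, and hence --- by the very fact this paper uses in Section \ref{Sec:PosStab} --- replaces the supported contact structure $\xi_{0}$ by the contact connected sum of $\xi_{0}$ with one fixed overtwisted structure on $S^{3}$. Since $H^{2}(S^{3})=0$, this operation never changes the two-dimensional part of the homotopy class of the plane field (the induced $\mathrm{spin}^{c}$ structure), and it shifts $d_{3}$ by the same constant each time, independently of the stabilizing arc. So ``varying the stabilization data'' reaches exactly one $\mathrm{spin}^{c}$ structure and a single arithmetic progression of $d_{3}$-values, and Eliashberg's classification cannot be invoked. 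Etnyre's actual proof supplies the missing ingredient: he realizes every $\mathrm{spin}^{c}$ structure by placing a surgery presentation of $M$ on planar pages with adjustable rotation numbers and by Lutz twisting along binding components, and only then adjusts $d_{3}$. A smaller but real error: a $1$-handle attached between two \emph{distinct} boundary components of the page \emph{raises} the genus by one (it merges two boundary circles); to keep the page planar the handle must have both feet on a single boundary component, which is what creates the new boundary circle.

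Part (2) follows Etnyre's route in outline, but three steps need repair. (i) The closing piece cannot be $S^{1}\times D^{3}$: there the fibre spheres bound $3$-balls, so the capped-off pages would be nullhomologous (hence could not be symplectic, and $Z$ would not be fibred by them); one must instead glue in $D^{2}\times S^{2}$, capping off the base of the $S^{2}$-fibration, so that the fibre class stays essential. (ii) A square-\emph{zero} sphere in the cap does not give $b_{2}^{+}(W)\geq 1$; the correct deduction of $b_{2}^{+}(X)=b_{2}^{0}(X)=0$ is that a class $\alpha\in H_{2}(X)$ with $\alpha^{2}\geq 0$, together with the nonzero square-zero fibre class which it meets trivially, is incompatible with $Q_{Z}$ having signature $(1,b^{-})$, followed by a Mayer--Vietoris argument for $b_{2}^{0}$. (iii) The diagonalization is not lattice theory, and both of your fallbacks fail: an integral homology sphere need not bound a rational homology ball (the Poincar\'e sphere does not), and a negative definite unimodular orthogonal summand of a diagonal unimodular form need not be diagonal, since $-E_{8}\oplus\langle 1\rangle\oplus 8\langle -1\rangle\cong \langle 1\rangle\oplus 16\langle -1\rangle$. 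This last step genuinely requires gauge-theoretic input adapted to the embedding $X\subset Z$, which is what Etnyre invokes.
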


\begin{cor}[\cite{Etnyre:Inter}] \label{Cor:PosTB}
Suppose that $K$ is a Legendrian knot in $\Sthree$ whose Thurston-Bennequin number $tb(K)>0$. Then
\be
\item  Legendrian surgery on $K$ produces a contact manifold which cannot be supported by a planar open book, and
\item  K cannot be contained in the page of a planar open book decomposition of $\Sthree$.
\ee
\end{cor}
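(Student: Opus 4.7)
The plan is to deduce (1) directly from Theorem~\ref{Thm:Inter}(2), and to derive (2) from (1) using the correspondence between Legendrian surgery and Dehn-twist modifications of supporting open books. I expect no serious obstacle beyond lining up framing conventions, since the genuine content is already contained in Theorem~\ref{Thm:Inter}(2).

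For (1), I would let $\Mxi$ denote the result of Legendrian surgery on $K$. By Eliashberg's theorem, $\Mxi$ is Stein fillable: a Stein, hence symplectic, filling $X$ is produced by attaching a single 2-handle to $B^{4}$ along $K$ with smooth framing $tb(K)-1$. Then $H_{2}(X;\mathbb{Z})\cong\mathbb{Z}$ and the intersection form of $X$ is the $1\times 1$ matrix $[tb(K)-1]$. The assumption $tb(K)>0$ forces $tb(K)-1\geq 0$, so either $tb(K)\geq 2$, giving $b^{+}_{2}(X)\geq 1$, or $tb(K)=1$, in which case the form vanishes identically and $b^{0}_{2}(X)\geq 1$. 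Either outcome violates Theorem~\ref{Thm:Inter}(2), so $\Mxi$ admits no planar supporting open book.

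For (2), I would argue by contradiction. Suppose $K$ sits on a page of a planar open book $\AOB$ of $\Sthree$. A standard Legendrian realization argument places $K$ on the page as a Legendrian curve whose page framing coincides with $tb(K)$, and the familiar open book description of Legendrian surgery then shows that the surgered manifold is supported by $(\Sigma,\Phi\circ T_{K})$, where $T_{K}$ is the right-handed Dehn twist along $K$. This modified open book has the same planar page $\Sigma$ and hence provides a planar supporting open book for the result of Legendrian surgery on $K$, contradicting (1).

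The main bookkeeping to verify carefully is the two framing assertions used above: that the smooth 2-handle framing in the Stein filling is exactly $tb(K)-1$, and that when a Legendrian knot is realized on a page of a compatible open book its Thurston--Bennequin framing agrees with the page framing. Both are now standard in the contact topology literature, so modulo these normalizations the proof reduces to a one-line intersection-form calculation plus a one-step monodromy modification.
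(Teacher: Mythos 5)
Your proposal is correct and follows essentially the same route as the paper: part (1) via the Weinstein/Stein 2-handle attachment with smooth framing $tb(K)-1$ and the intersection-form obstruction of Theorem \ref{Thm:Inter}(2), and part (2) by realizing Legendrian surgery as a positive Dehn twist on the planar page (Theorem \ref{Thm:TwistSurgery}) to contradict (1). If anything, your case split between $tb(K)\geq 2$ (giving $b_{2}^{+}(X)\geq 1$) and $tb(K)=1$ (giving $b_{2}^{0}(X)\geq 1$) is slightly more careful than the paper's blanket claim that $b_{2}^{+}(X)=1$.
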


\begin{proof}
Suppose that $tb(K)>0$.  Legendrian surgery on $K$ gives rise to a symplectic filling $(X,\omega)$ of the contact manifold $(S^{3}_{K},\xi_{K})=\partial (X,\omega)$ obtained by surgery on $K$.  This follows from that fact that the surgery may be realized as the attachment of a symplectic 2-handle to the filling of $\Sthree$ by a 4-dimensional disk in the symplectic manifold $(\mathbb{R}^{4},dx_{1}\wedge dy_{1}+dx_{2}\wedge dy_{2})$.  See \cite{Weinstein:Handles}.  Legendrian surgery along $K$ is smoothly equivalent to a $tb(K)-1$ surgery with respect to the Seifert framing as can be seen by comparing the Seifert and contact framings of $K$ (c.f. \cite[\S 7.2]{OzbSt:SteinSurgery}).  Therefore, the union of a Seifert surface for $K$ in the filling together with the core disk of the 4-dimensional surgery 2-handle represents a non-zero class in $H_{2}(X,\mathbb{Z})$ with self intersection $tb(K)-1\geq 0$.  It follows that $b_{2}^{+}(X)=1$ so that $(S^{3}_{K},\xi_{K})$ cannot be supported by a planar open book decomposition by Theorem \ref{Thm:Inter}(2).  This establishes our first assertion.

If $K$ is contained in the page $\Sigma$ of a supporting open book decomposition of $\Sthree$, then Legendrian surgery on $K$ may be performed by precomposing the monodromy of this open book by a Dehn twist about $K$.  See Theorem \ref{Thm:TwistSurgery}.  This means that if $\Sigma$ is planar, then $(S^{3}_{K},\xi_{K})$ is supported by an open book with planar pages, contradicting the observations stated in the previous paragraph.  Therefore the second statement follows from the first.
\end{proof}

The existence result for planar open books on topological manifolds has been improved to show that for every topological link $L$ in a 3-manifold $M$, there is a planar open book decomposition of $M$ which contains $L$ in a single page.  This theorem was first proved by Calcut (see \cite[Theorem 7]{Calcut}, \cite[\S 2]{Calcut2}).  Another proof was later given by Onaran (see \cite[Theorem 1.2]{Ona:LegBook}).  This result, together with Theorem \ref{Thm:Inter} and Corollary \ref{Cor:PosTB}, imply that genus minimization for pages of open book decompositions is a purely \emph{contact-topological} problem.  These results have lead to the definition and study of the \emph{support invariants} \cite{EtOzb:Support, Ona:LegBook}.

\begin{defn}  Define the \emph{support genus, binding number}, and \emph{support norm} of $\Mxi$ by
\begin{equation*}
\begin{gathered}
sg\Mxi = min\lbrace\; g(\Sigma)\; : \; \exists \;\AOB \;\text{supporting}\; \Mxi \rbrace \\
bn\Mxi = min\lbrace\; \#(\partial\Sigma) \; : \; \exists \;\AOB \;\text{supporting} \; \Mxi \;\text{with} \;g(\Sigma)=sg\Mxi \rbrace \\
sn\Mxi = min\lbrace\; -\chi(\Sigma)\; : \; \exists \;\AOB \;\text{supporting} \;\Mxi \rbrace.
\end{gathered}
\end{equation*}
Similarly, for a Legendrian link $L$ in $\Mxi$ we may define $sg(M,\xi,L), bn(M,\xi,L)$, and $sn(M,\xi,L)$ by restricting to those open books supporting $\Mxi$ for which $L$ is contained in a single page.
\end{defn}

\begin{rmk}
In addition to Theorem \ref{Thm:Inter} there are obstructions to the existence of supporting planar open books such as those coming from Heegaard Floer homology \cite{OSS:PlanarFloer}, embedded contact homology \cite{Wendl:ECH}, symplectic fillability \cite{WendlNied:Stein}, and Dehn twist factorizations of mapping classes \cite{Wand}.  However, at the time of the writing of this paper there is no known example of a contact manifold whose support genus is greater than 1.
\end{rmk}

While ``having a common positive stabilization'' is a rather complex notion of equivalence between supporting open books,  much less is known about how contact surgery diagrams relate to one another.  Developments of this type appear in the works of Ding-Geiges \cite{DG:Surgery,DG:LWall,DG:Handles}, who have shown that every contact manifold may be obtained by contact surgery, and have listed a number of handle-slide and cancelation type moves which can be used to modify contact surgery diagrams.  Our goals in this paper are to develop Kirby moves \cite{FennRourke, Kirby} for contact manifolds by employing the open book perspective, as well as to analyze the problem of minimizing support invariants from the surgery perspective.  All results obtained are consequences of two algorithms described in Theorems \ref{Thm:Alg} and \ref{Thm:AlgLink}.

\subsection{From open books to surgery diagrams and the existence of surgery presentations}

Our first algorithm shows how to convert an open book presentation of a contact manifold to a contact surgery diagram.

\begin{thm} \label{Thm:Alg}
Suppose that the contact 3-manifold $\Mxi$ is presented as an open book ($\Sigma,\Phi$), with $\partial\Sigma$ connected and for which the monodromy $\Phi$ is given as a product of positive and negative Dehn twists on the Lickorish generators.  There is an algorithm which, from this data, creates a Legendrian link $L=L^{+}\cup L^{-}$ contained in $\Sthree$ for which contact $+1$ surgery on $L^{+}$ and contact $-1$ surgery on $L^{-}$ yields $\Mxi$.  Every connected component of $L$ is either an unknot with $tb=-1$ or an unknot with $tb=-2$.  Moreover, any two-component sublink of $L$ is an unlink, a Hopf link, or a $(-4,2)$-torus link.
\end{thm}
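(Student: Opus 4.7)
The plan is to construct the surgery diagram incrementally, starting from the standard tight contact structure on $\#_{2g}(S^{1}\times S^{2})$ supported by $(\Sigma, id)$ and then converting each Dehn twist in the factorization of $\Phi$ into a contact $\pm 1$ surgery by invoking Theorem \ref{Thm:TwistSurgery}. For the base case, $(\#_{2g}(S^{1}\times S^{2}),\xi_{std})$ is obtained from $\Sthree$ by contact $+1$ surgery on $2g$ disjoint Legendrian unknots with $tb=-1$, and the page $\Sigma$ (genus $g$ with one boundary) can be drawn explicitly inside this diagram as a planar disk with $g$ pairs of bands, each pair occupying a neighborhood of a pair of the $+1$-surgery unknots. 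This exhibits the page as a visible subsurface of the surgery picture.

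Next, write $\Phi=\tau_{\gamma_{n}}^{\epsilon_{n}}\circ\cdots\circ\tau_{\gamma_{1}}^{\epsilon_{1}}$, where each $\gamma_{i}$ is a Lickorish generator and $\epsilon_{i}=\pm 1$. By Theorem \ref{Thm:TwistSurgery}, composing the monodromy by $\tau_{\gamma_{i}}^{\epsilon_{i}}$ corresponds to performing contact $(-\epsilon_{i})$ surgery on a Legendrian realization of $\gamma_{i}$ lying on the page (with page framing equal to contact framing). Since every Lickorish generator is supported in the page region of at most two handles, it can be drawn in the $\Sthree$ front projection as a topologically unknotted Legendrian curve, and by a canonical placement plus at most one negative stabilization it is realized as a Legendrian unknot with $tb\in\{-1,-2\}$. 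Repeated appearances of the same generator in the factorization are handled by taking Legendrian pushoffs and exploiting the agreement of page and contact framings.

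It remains to identify the two-component sublinks. Any two Lickorish generators meet in at most two points on $\Sigma$, and in the explicit $\Sthree$ diagram these three possibilities produce exactly the link types stated: disjoint generators yield a Legendrian unlink, generators with a single intersection yield a Legendrian Hopf link, and generators with two intersections (e.g.\ a handle-encircling curve together with an adjacent connector) yield a Legendrian $(-4,2)$-torus link; the same analysis covers pairs involving Legendrian pushoffs. The main obstacle is the combinatorial design of this explicit Legendrian picture: one must arrange the page, its Lickorish generators, and the stabilizations so that every curve becomes an unknot of $tb\in\{-1,-2\}$ while every pair realizes precisely the prescribed link type. This front-projection arrangement, though essentially mechanical once set up, is where the algorithmic content of the theorem resides.
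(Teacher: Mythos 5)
Your high-level strategy is the same as the paper's: realize $(\Sigma,\mathrm{id})$ by contact $+1$ surgeries and then convert each Dehn twist of $\Phi$ into a contact $\mp 1$ surgery on a parallel copy of the page via Theorem \ref{Thm:TwistSurgery}. The difficulty is that your sketch defers exactly the step that carries all of the content. You never produce the Legendrian embedding of the page: the paper's Algorithm 1 embeds the Lickorish $1$-skeleton $(\cup\alpha_{j})\cup(\cup\beta_{j})\cup(\cup c_{j})$ as an explicit Legendrian graph in the front projection (Figure \ref{Fig:EmbedSkeleton}) and takes its ribbon, and Lemma \ref{Lemma:Embed} shows this ribbon is a page of an open book of $\Sthree$ with monodromy $\prod_{1}^{g}D_{\beta_{i}}\circ D_{\alpha_{i}}$ (computed via Theorem \ref{Thm:Mono}), which is then cancelled by $+1$ surgeries on $\alpha_{i}(0)$ and $\beta_{i}(-1)$. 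Asserting that the page of $(\Sigma,\mathrm{id})$ ``can be drawn explicitly inside'' a surgery diagram for $\#_{2g}(S^{1}\times S^{2})$ is precisely the claim that needs proof: one must exhibit the surface in $\Sthree$, verify it is a ribbon compatible with $\xi_{std}$, and read off the front projections, Thurston--Bennequin numbers, and pairwise linkings of all the Lickorish curves on it. Without this, the conclusions about $tb\in\{-1,-2\}$ and the unlink/Hopf/$(-4,2)$ trichotomy are unsupported.

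Moreover, one of your repairs would break the argument: ``a canonical placement plus at most one negative stabilization'' to force $tb\in\{-1,-2\}$ is not available. Theorem \ref{Thm:TwistSurgery} requires the surgery curve to be Legendrian realized on the page so that its contact framing agrees with the page framing (as you yourself note); stabilizing a Legendrian representative changes its contact framing relative to the page framing, so the resulting surgery no longer effects the intended Dehn twist. The values $tb(\alpha_{i})=tb(\beta_{i})=-1$ and $tb(\gamma_{j})=-2$ are forced by the embedding and must be computed, not arranged. Relatedly, the $(-4,2)$-torus links in the statement arise from contact-framed parallel copies of the $tb=-2$ unknots $\gamma_{j}$ placed at different levels (and from certain pairs involving the $\gamma_{j}$), not generically from ``generators with two intersections''; and since the linking number of $\zeta(t)$ with $\zeta'(t')$ depends on the sign of $t-t'$ (the Seifert form of the page is not symmetric), even the unlink-versus-Hopf dichotomy among the $\alpha,\beta$ curves depends on the ordering of the levels $0,-1,j/n$, a point your sketch does not track but on which the correctness of the cancellation step hinges.
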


Section \ref{Sec:SurgBook} describes this construction in detail.

As a corollary of Theorem \ref{Thm:Alg} we obtain a new proof, and an improvement of Ding-Geiges' result in \cite{DG:LWall} which states that every contact 3-manifold may be obtained by contact $\pm1$-surgeries in $\Sthree$.  With the help of Theorem \ref{Thm:GirCor}, we present a proof which is, in spirit, exactly the same as Lickorish's elementary proof \cite{Lickorish:Surgery} that every closed, oriented topological 3-manifold admits a surgery presentation.

\begin{cor}\label{Thm:ContLickWall}
Every contact 3-manifold $\Mxi$ may be obtained by a sequence of contact $\pm 1$ surgeries on Legendrian knots in $\Sthree$.  Moreover, such a surgery presentation $L=L^{+}\cup L^{-}$ describing $\Mxi$ can be chosen to satisfy the conditions listed in Theorem \ref{Thm:Alg}.
\end{cor}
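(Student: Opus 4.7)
The plan is to derive the corollary by assembling the hypotheses of Theorem \ref{Thm:Alg} starting from an arbitrary contact manifold $\Mxi$, and this is where Theorem \ref{Thm:GirCor} enters: it guarantees the existence of some supporting open book $\AOB$ for $\Mxi$. That open book need not satisfy the hypotheses of Theorem \ref{Thm:Alg} as stated (connected binding, monodromy written as an explicit word in Lickorish-type generators), so the bulk of the argument consists of modifying $\AOB$ within its equivalence class under positive stabilization until the hypotheses are met, and then invoking the algorithm.

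Step one is to arrange for $\partial\Sigma$ to be connected. The key observation is that a positive Giroux stabilization whose attaching 1-handle has its two feet on \emph{distinct} boundary components reduces the number of boundary components of the page by one, while preserving the contact manifold $\Mxi$ (by the existence half of Theorem \ref{Thm:GirCor}). Iterating this, after $\#(\partial\Sigma)-1$ such stabilizations we obtain a new supporting open book $(\Sigma',\Phi')$ for $\Mxi$ with $\partial\Sigma'$ connected; each stabilization modifies the monodromy by postcomposition with a single positive Dehn twist along the core curve of the new 1-handle, so we keep explicit control over $\Phi'$.

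Step two is to realize $\Phi'$ as a product of positive and negative Dehn twists on the Lickorish generators of $\Sigma'$. This is a purely topological fact: the mapping class group of a compact orientable surface with non-empty boundary, rel boundary, is generated by Dehn twists along a finite collection of simple closed curves (Lickorish's theorem, extended to surfaces with boundary), so such a factorization exists abstractly. Combined with step one, the pair $(\Sigma',\Phi')$ now meets every hypothesis of Theorem \ref{Thm:Alg}. Applying the algorithm produces the required Legendrian link $L=L^{+}\cup L^{-}\subset\Sthree$ together with the surgery coefficients $\pm 1$, and the structural statements on the components and two-component sublinks are inherited directly from Theorem \ref{Thm:Alg}.

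I expect the main obstacle to be psychological rather than technical: one must resist the temptation to reprove the Ding--Geiges existence result by a direct handle-theoretic argument, and instead use the open book side of the Giroux correspondence as the entire engine, deferring all Legendrian bookkeeping to Theorem \ref{Thm:Alg}. The only genuine check is that the stabilization moves used in step one keep us inside the class of supporting open books of the same $\Mxi$, which is precisely the content of the Thurston--Winkelnkemper/Giroux construction. Once that is granted, the corollary reduces to a two-line application of the preceding theorem.
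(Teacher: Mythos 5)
Your proposal is correct and follows essentially the same route as the paper: invoke Theorem \ref{Thm:GirCor}(1) for a supporting open book, positively stabilize to make the binding connected, factor the monodromy via Lickorish's generation theorem, and apply Theorem \ref{Thm:Alg}. The extra detail you supply about boundary-component counting under stabilization is a harmless elaboration of what the paper states in one line.
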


\begin{proof}
By Theorem \ref{Thm:GirCor}(1), $\Mxi$ is supported by an open book determined by some $\AOB$.  Possibly after a sequence of positive stabilizations, we may assume that the binding of $\AOB$ is connected.  We know from \cite{Lickorish:Generators} that $\Phi$ admits a factorization into a product of positive and negative Dehn twists on the curves depicted in Figure \ref{Fig:LickorishCurves}.  Now apply Theorem \ref{Thm:Alg}.
\end{proof}

\subsection{From surgery diagrams to open books and applications}

Our second algorithm provides a way of embedding a Legendrian link in $\Sthree$ into the page of a supporting open book of $\Sthree$.  Throughout the remainder of this paper, unless otherwise stated, $L$ will refer to a Legendrian link in $\Rthree$.  We write $D(L)$ for a front projection diagram of $L$.  A diagram $D(L)$ will be called \emph{non-split} if for every circle $c$ in $\mathbb{R}^{2}\setminus D(L)$, the disk $\disk^{2}\subset\mathbb{R}^{2}$ bounding $c$ contains either all or none of the components of $D(L)$.

\begin {thm} \label{Thm:AlgLink}
There is an algorithm (with choices) which assigns to each Legendrian link diagram $D(L)$, in $\mathbb{R}^{3} $ an open book $\AOB$ supporting $\Sthree$ which contains $L$ in a single page.  If $D(L)$ is non-split, the choices involved may be made in such a way that
\begin{equation} \label{Eq:Ineq}
-\chi(\Sigma) = \#(\text{Crossings of $D(L)$}) + \frac{1}{2}\cdot\#(\text{Cusps of $D(L)$}) -1.
\end{equation}
A Dehn twist factorization of the monodromy of the open book constructed is described in Theorem \ref{Thm:Mono}.
\end{thm}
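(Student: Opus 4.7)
The plan is to construct the page $\Sigma$ explicitly from $D(L)$ as a ribbon-like surface embedded in $\mathbb{R}^{3}$, to realize it as the page of an open book of $S^{3}$, and to verify both Giroux compatibility with $\xi_{std}$ and the Euler characteristic formula. First, I would isotope $D(L)$ to a standard position so that every crossing and every cusp lies in a prescribed local model. The surface $\Sigma$ is then built as follows: start with a disk in the plane of projection containing a neighborhood of $D(L)$; at each crossing attach a twisted $1$-handle (lifted slightly out of the plane) that resolves the double point and makes $\Sigma$ embedded in $\mathbb{R}^{3}$; at each pair of cusps belonging to a single strand attach a further $1$-handle modeling the Legendrian turn. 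By construction, $L$ lies on $\Sigma$ as a Legendrian sublink with its contact framing matching the page framing, and $\Sigma$ is a disk with $\#(\text{crossings}) + \tfrac{1}{2}\#(\text{cusps})$ one-handles, so $\chi(\Sigma) = 1 - c - k/2$. In the non-split case no extra ``connecting'' handles are needed to amalgamate disjoint pieces of the construction, which is precisely why the formula is asserted only for non-split diagrams.

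Next, I would exhibit $\Sigma \subset S^{3}$ as the page of an open book of $S^{3}$. The complement of an open neighborhood of $\Sigma$ in $S^{3}$ decomposes into standard pieces---one outside the ambient disk and one surrounding each $1$-handle---each of which carries an explicit $\Sigma$-bundle structure over an arc. These local fibrations match along their common boundaries to yield a global fibration $S^{3}\setminus\partial\Sigma \to S^{1}$ and hence an open book decomposition. The monodromy $\Phi$ turns out to be a specific product of positive and negative Dehn twists on curves in $\Sigma$ dual to the attaching regions of the $1$-handles; the precise factorization is the content of Theorem \ref{Thm:Mono} and is not needed to establish the present statement.

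The main obstacle will be verifying Giroux compatibility of $(\Sigma,\Phi)$ with $\xi_{std}$. I would do this by producing a contact form $\alpha$ on $S^{3}$ whose Reeb vector field $R_{\alpha}$ is positively transverse to every page and positive along the binding $\partial\Sigma$. Since the construction is local at each crossing and cusp, it suffices to verify compatibility in a model neighborhood of each elementary piece---where the surface is a standard Legendrian ribbon of a Legendrian arc in $\Rthree$---and then interpolate globally via a Thurston--Winkelnkemper--style partition of unity. Uniqueness of the tight contact structure on $S^{3}$ up to isotopy then identifies the resulting contact manifold with $\Sthree$. Tallying the handle attachments gives the Euler characteristic formula, completing the proof.
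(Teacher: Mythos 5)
Your outline diverges from the paper's argument at the crucial point, and the step where it diverges is exactly the one that cannot be waved through. The paper does not build $\Sigma$ as ``a disk containing the diagram with handles at crossings and cusps''; it enlarges $L$ to a Legendrian \emph{graph} $\bar{L}$ by (i) completing each crossing with a vertical Legendrian chord, (ii) adjoining arcs to make the diagram connected, and (iii) -- the step your proposal has no counterpart for -- adjoining further arcs until \emph{every} complementary small disk has $tb(\partial\disk)=-1$. The page is then the ribbon of $\bar{L}$, and the fact that this ribbon is a page of an open book supporting $\Sthree$ is obtained by verifying the hypotheses of Giroux's criterion (Theorem \ref{Thm:GRibbon}): $\bar{L}$ is the 1-skeleton of a contact cell decomposition whose 2-cells all have $tb=-1$ and each meet $\partial\Sigma$ twice. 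Your claim that the local $\Sigma$-bundle structures on the complementary pieces ``match along their common boundaries to yield a global fibration'' is precisely the assertion that needs proof, and it is false without the $tb=-1$ condition: the ribbon of a $tb=-2$ Legendrian unknot is an annulus whose complement does not fiber so as to give an open book supporting the tight $S^{3}$ (the annulus open book of $\Sthree$ has the $tb=-1$ unknot as its core). Your handle count $\chi=1-c-k/2$ happens to agree numerically with the paper's count of elementary disks ($c+k/2$ of them after partitioning), but agreement of Euler characteristics does not make your surface a fiber surface, and no argument is offered that it is one. Relatedly, it is not clear that $L$ sits on your surface with page framing equal to contact framing: if $L$ lies in the interior of the big disk you start with, its surface framing is the Seifert framing, not the contact framing; the paper's ribbon, being a $y$-thickening of the graph itself, has this property by construction.

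The final step of your plan also substitutes a harder problem for an easier one. A Thurston--Winkelnkemper interpolation produces \emph{some} contact structure supported by the abstract open book; to conclude you would then need to show it is tight, invoke Eliashberg's uniqueness, and further argue that under the resulting identification the copy of $L$ on the page is Legendrian isotopic to the original link in $\xi_{std}$ -- none of which is addressed. The paper sidesteps all of this by never leaving the ambient standard contact structure: the ribbon is a ribbon \emph{in} $\Sthree$, Theorem \ref{Thm:GRibbon} certifies it as a page of a supporting open book, and $L\subset\bar{L}$ lies on that page tautologically. (A small additional symptom: you expect the monodromy to involve positive and negative Dehn twists, whereas Theorem \ref{Thm:Mono} shows it is a product of positive Dehn twists about the boundaries of the elementary disks, ordered by the partial order $\disk>\disk'$.) To repair your proposal you would need to replace the ``local fibrations glue'' step with an actual criterion -- in practice, reintroduce the partitioning of complementary disks to $tb=-1$ and appeal to the contact cell decomposition machinery, at which point you have reproduced the paper's proof.
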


\begin{figure}[h]
	\begin{overpic}[scale=.7]{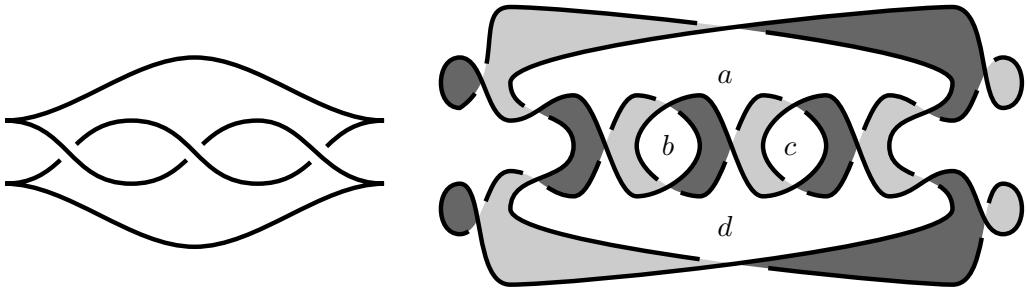}
        \put(70,20){$a$}
        \put(64.5,13){$b$}
        \put(76.5,13){$c$}
        \put(70,5){$d$}
    \end{overpic}
    \caption{On the left is a right-handed Legendrian trefoil $L\subset\Sthree$.  On the right is the page $\Sigma$ of an open book supporting $\Sthree$ which contains $L$, constructed using the algorithm of Theorem \ref{Thm:AlgLink}.  Note that $\Sigma$ has the topological type of a 3-punctured torus.  The letters $a,b,c$, and $d$ correspond to simple closed curves $\gamma_{a},\gamma_{b},\gamma_{c}$, and $\gamma_{d}$ contained in the surface.  The monodromy of the associated open book of $\Sthree$ is $\Phi=D^{+}_{\gamma_{d}}\circ D^{+}_{\gamma_{c}}\circ D^{+}_{\gamma_{b}}\circ D^{+}_{\gamma_{a}}$.  See Section \ref{Sec:Conv} and Theorem \ref{Thm:Mono} for details.  We establish the following convention which will be used throughout this paper:  When an oriented surface $\Sigma$ is drawn in the front projection of $\mathbb{R}^{3}$, the regions of $\Sigma$ on which its orientation agrees (disagrees) with the blackboard orientation will be lightly (heavily) shaded.}
    \label{Fig:Trefoil}
\end{figure}

By applying Theorem \ref{Thm:TwistSurgery}, Theorem \ref{Thm:AlgLink} can be used to convert a contact surgery diagram in $\Sthree$ into a supporting open book.

Our algorithm is far from the first of this kind to appear in the literature \cite{AkOzb:LefFib,Arikan:Genus,Plame:Algorithm,Stipsicz}, although our approach will be rather different.  All those conceived thus far have been modifications of a technique which embeds a given bridge diagram into a template convex surface, which is shown to be the page of an open book decomposition of the sphere compatible with its standard contact structure $\Sthree$.  The algorithm described in Theorem \ref{Thm:AlgLink} directly follows the proof of Theorem \ref{Thm:GirCor}(1) by describing explicit contact cell decompositions (Definition \ref{Def:ContactCell}) of $\Sthree$.  See Section \ref{Sec:GirBook} for a brief outline of the part of the proof of Theorem \ref{Thm:GirCor}(1) needed for our purposes.

Theorem \ref{Thm:AlgLink} often gives improved bounds on the support genus and norm of links in $\Sthree$.  In the case of the Legendrian trefoil knot $L$ in Figure \ref{Fig:Trefoil}, we improve the known upper bound  $sg(S^{3},\xi_{std},L) \leq 3$ \cite{Arikan:Genus} to the computation $sg(S^{3},\xi_{std},L) = 1$.  We study the support invariants of general Legendrian $(2n+1,2)$-torus knots for $n\geq 0$ in Section \ref{Sec:TorusKnots}.  While preparing this paper, the computations of Section \ref{Sec:TorusKnots} were obtained independently -- and in many cases improved using Heegaard Floer homology -- in \cite{LiWang}.

In Section \ref{Sec:PosStab} we use Theorem \ref{Thm:AlgLink} to show that overtwistedness of a contact manifold $\Mxi$ is equivalent to the existence of special type of contact surgery diagram for $\Mxi$, which corresponds to a negative stabilization in some compatible open book.  This may be viewed as Theorem \ref{Thm:GirouxStab}(2) for surgery diagrams, and complements the surgery construction of the Lutz twist as described in \cite{DGS:Lutz}.  See Section \ref{Sec:SupportInv} for these and other applications of Theorem \ref{Thm:AlgLink} to the study of support invariants.

\subsection{Mapping class relations as Kirby moves}

\begin{figure}[h]
	\begin{overpic}[scale=.7]{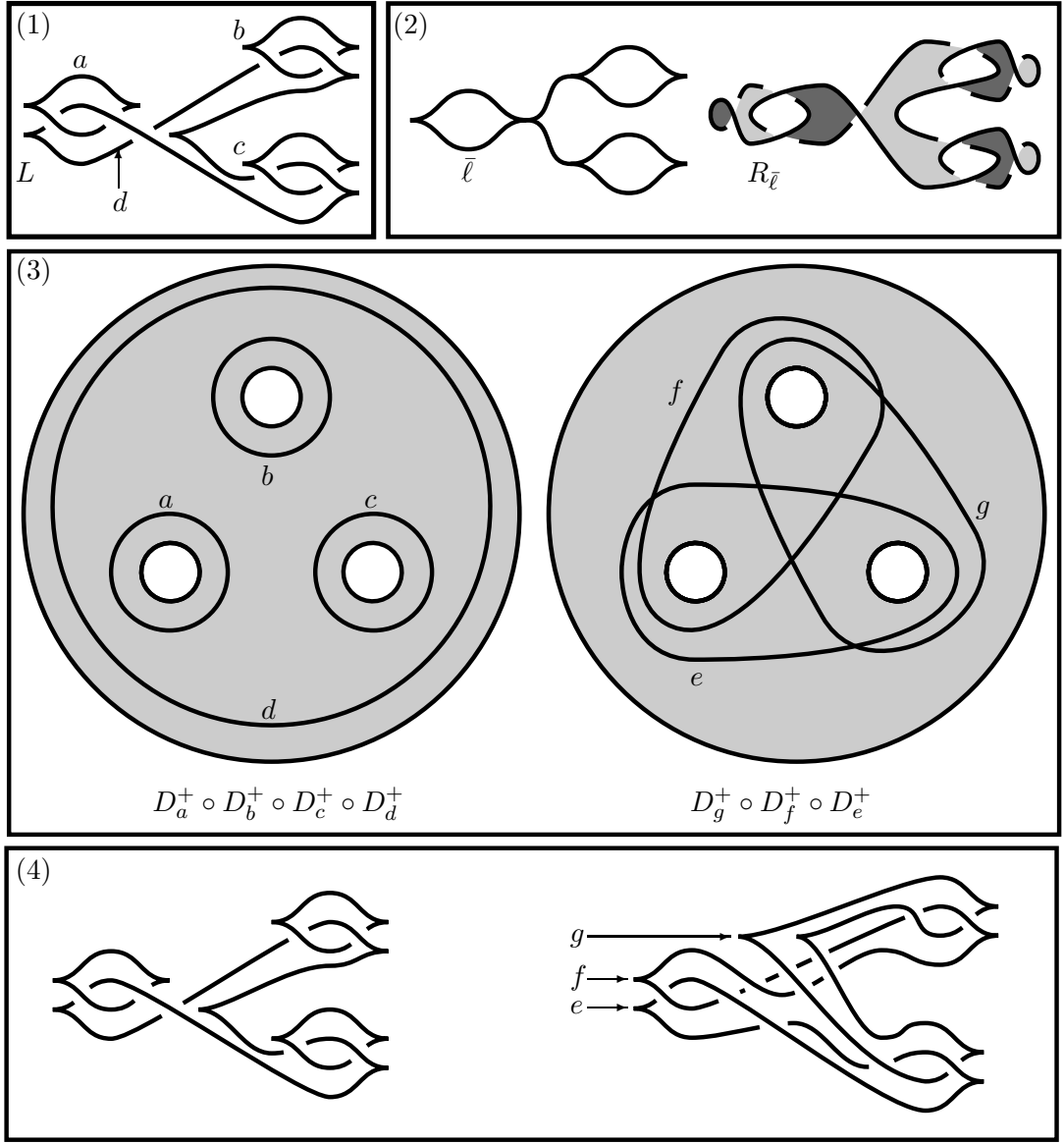}
    \put(1,97){$(1)$}
    \put(34,97){$(2)$}
    \put(1,75.5){$(3)$}
    \put(1,23){$(4)$}
    \put(1,84){$L$}
    \put(40,84){$\bar{\ell}$}
    \put(65,84){$R_{\bar{\ell}}$}
    \put(6,94){$a$}
    \put(20,96.5){$b$}
    \put(20,86.5){$c$}
    \put(9.5,81.5){$d$}
    \put(10,83.5){\vector(0,1){3.5}}
    \put(22.5,57.5){$b$}
    \put(13.5,55.5){$a$}
    \put(31.5,55.5){$c$}
    \put(22.5,37){$d$}
    \put(60,40){$e$}
    \put(58,65){$f$}
    \put(85,55){$g$}
    \put(49.5,11.25){$e$}
    \put(51,11.75){\vector(1,0){3.5}}
    \put(49.5,13.75){$f$}
    \put(51,14.25){\vector(1,0){3.5}}
    \put(49.5,17.5){$g$}
    \put(51,18){\vector(1,0){12.5}}
    \put(13,29){$D^{+}_{a}\circ D^{+}_{b}\circ D^{+}_{c}\circ D^{+}_{d}$}
    \put(60,29){$D^{+}_{g}\circ D^{+}_{f}\circ D^{+}_{e}$}
    \end{overpic}
	\caption{An example of a ribbon move.  Taking all contact surgery coefficients to be $-1$, (1) presents a contact manifold by surgery on a link $L$ of unknots.  In (2) the surgery link $\ell=a\cup b\cup c$ is embedded in a connected Legendrian graph $\bar{\ell}$, shown together with its ribbon $R_{\bar{\ell}}$.  We may regard $\ell$ as a collection of Dehn twists on $R_{\bar{\ell}}$. The curve $d$ also embeds into $R_{\bar{\ell}}$ as shown in (3).  $-1$ surgery on each component of $L$ corresponds to a diffeomorphism $D_{L}$ of $R_{\bar{\ell}}$ with Dehn twist factorization $D^{+}_{a}\circ D^{+}_{b}\circ D^{+}_{c}\circ D^{+}_{d}$.  On the right hand side of (3) is another Dehn twist factorization of $D_{L}$.  Finally in (4), Legendrian surgery presentations of the two Dehn twist factorizations are depicted.  Again all surgery coefficients are $-1$.}
    \label{Fig:Lantern}
\end{figure}

As a final application of Theorem \ref{Thm:AlgLink} we show how relations between Dehn twists in the mapping class group of a surface can be interpreted as Kirby moves (as in \cite{Kirby, FennRourke}) relating contact surgery diagrams.  Such a Kirby move, which we call a \emph{ribbon move} is executed as follows:
\be
\item Let $L$ be a contact surgery diagram in $\Sthree$ presenting the contact manifold $\Mxi$, and suppose that $\ell$ is a surgery sub-link of $L$.
\item Adjoin Legendrian arcs to $\ell$ to obtain a connected Legendrian graph $\bar{\ell}$ with ribbon $R_{\bar{\ell}}$.  The algorithm described in Theorem \ref{Thm:AlgLink} can be used to draw $R_{\bar{\ell}}$ in the front projection.
\item Each connected component of $\ell$ with its surgery coefficient correspond to a positive or negative Dehn twist on $R_{\bar{\ell}}$.  See Theorem \ref{Thm:TwistSurgery}.  Therefore $\ell$ determines an element $D_{\ell}$ of the mapping class group of $R_{\bar{\ell}}$ with a preferred Dehn twist factorization. Suppose that we can find another Dehn twist factorization
\begin{equation*}
D_{\ell} = D_{\zeta_{n}}^{\delta_{n}}\circ\dots\circ D_{\zeta_{1}}^{\delta_{1}},\quad \delta_{j}\in\lbrace +,- \rbrace
\end{equation*}
where the $\zeta_{j}$ are Legendrian realizable curves (in the sense of Section \ref{Sec:LegRel}) in $R_{\bar{\ell}}$.
\item There is a surgery link $\zeta$ contained in a neighborhood of $R_{\bar{\ell}}$ corresponding to the new Dehn twist factorization of $D_{\ell}$.  In other words, $D_{\zeta}=D_{\ell}$.  Now delete $\ell$ from $L$ and insert $\zeta$.  The surgery diagram $(L\setminus \ell)\cup\zeta$ gives an alternative surgery presentation of $\Mxi$.
\ee

The details of this construction are described in Section \ref{Sec:Kirby}.  Figure \ref{Fig:Lantern} provides an example corresponding to a lantern relation among Dehn twists on a 3-punctured disk.   The two surgery presentations in the bottom row of the figure give distinct Stein fillings of a contact structure on the Seifert fibered manifold $M(-\frac{1}{2},-\frac{1}{2},-\frac{1}{2})$.

With Theorem \ref{Thm:AlgLink} in mind, a contact surgery diagram may be thought of as an open book whose monodromy has a preferred Dehn twist factorization.  Therefore different surgery diagrams of the same contact manifold should be related by mapping class relations and Theorem \ref{Thm:GirCor}.  More precisely, in Section \ref{Sec:KirbyProof} we prove the following:

\begin{thm}\label{Thm:Kirby}
Let $\Mxi$ be a contact 3-manifold.  Suppose that $X=X^{+}\cup X^{-}$ and $Y=Y^{+}\cup Y^{-}$ are Legendrian links in $\Sthree$, both of which determine $\Mxi$ by contact surgery.  Then $X$ and $Y$ are related by a sequence of ribbon moves and Legendrian isotopies.
\end{thm}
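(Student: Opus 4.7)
My proposal is to translate the problem from the surgery world to the open book world via Theorem \ref{Thm:AlgLink}, invoke Giroux's correspondence, and then translate mapping class group relations back into ribbon moves. Apply the algorithm of Theorem \ref{Thm:AlgLink} to the front projections of $X$ and $Y$, combined with the surgery-to-Dehn-twist dictionary of Theorem \ref{Thm:TwistSurgery}. This produces supporting open books $(\Sigma_X,\Psi_X)$ and $(\Sigma_Y,\Psi_Y)$ of $\Mxi$ whose monodromies carry preferred Dehn twist factorizations. The twist curves are of two kinds: curves arising from components of $X$ (resp. $Y$) embedded in the page by the algorithm, together with the background twists realizing the open book of $\Sthree$ before surgery.

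By Theorem \ref{Thm:GirCor}(2), $(\Sigma_X,\Psi_X)$ and $(\Sigma_Y,\Psi_Y)$ share a common positive stabilization $(\Sigma,\Psi)$. The next step is to realize a single positive stabilization as a ribbon move on the underlying surgery diagram. A positive stabilization attaches a 1-handle $h$ to the page and post-composes the monodromy by a positive Dehn twist along a curve $\gamma$ crossing $h$ exactly once. On the surgery side one enlarges the Legendrian graph $\bar\ell$ from the ribbon construction by a Legendrian arc whose regular neighborhood realizes $h$; the curve $\gamma$ then becomes Legendrian realizable in the enlarged ribbon $R_{\bar\ell'}$ in the sense of Section \ref{Sec:LegRel}, and inserting the corresponding surgery component is precisely a ribbon move (combined with a Legendrian isotopy). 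Iterating this procedure, both $X$ and $Y$ are transformed, via sequences of ribbon moves and Legendrian isotopies, into surgery diagrams $X',Y'$ whose algorithm-produced open books both equal the common stabilization $(\Sigma,\Psi)$.

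On this common page $\Sigma$ the diagrams $X'$ and $Y'$ now define two Dehn twist factorizations of the \emph{same} mapping class $\Psi$. Any two such factorizations, by a classical result about the mapping class group of a compact surface with boundary, are connected by a finite sequence of the standard relations (commutation of disjoint twists, conjugation, braid, chain, lantern, etc.). Each such relation is supported on an explicit sub-surface of $\Sigma$, and after presenting this sub-surface as the ribbon of a Legendrian graph---which is always possible by Theorem \ref{Thm:AlgLink}---the relation is literally a ribbon move by the definition given in Section \ref{Sec:Kirby}. Composing these ribbon moves carries $X'$ to $Y'$ and, combined with the reductions of Step~1, connects $X$ to $Y$.

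The main obstacle is Step~1: verifying that positive stabilization of the supporting open book can be implemented as a ribbon move on the surgery diagram. One must exhibit the attached 1-handle inside an enlarged Legendrian ribbon, ensure that the new stabilizing Dehn twist curve is Legendrian realizable there, and check that the resulting move truly preserves the surgered contact manifold. Once this geometric bookkeeping is carried out, Step~2 is essentially a direct translation through the ribbon-move dictionary and is built into its definition.
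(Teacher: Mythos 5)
Your overall strategy (pass to open books via Theorem \ref{Thm:AlgLink}, invoke Theorem \ref{Thm:GirCor}, translate back) is the same as the paper's, and you correctly identify that positive stabilization must be realized on the surgery side (the paper does this with an insertion of a canceling pair, a Legendrian isotopy, and a handle slide). However, there are two genuine gaps. First, you apply the algorithm to $X$ and $Y$ \emph{separately}, producing two pages $\Sigma_X,\Sigma_Y$ embedded in different places in $\Sthree$. A ribbon move, by definition, requires a \emph{single} Legendrian graph $G\subset\Sthree$ with both sublinks contained in $R_{G}\times[-1,1]$; after your separate stabilizations the two open books are only abstractly isomorphic, and you have no mechanism for bringing $X'$ and $Y'$ onto a common embedded ribbon. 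The paper arranges this from the outset by isotoping $X$ into $\{x<0\}$ and $Y$ into $\{x>0\}$ and running Algorithm 2 on the split union $X\cup Y$, so both links sit on one embedded page $\Sigma$ from the start. Second, and more seriously, Theorem \ref{Thm:GirCor} only guarantees that after common positive stabilization the two monodromies are \emph{conjugate}, not equal: an abstract open book determines $\Phi$ only up to conjugacy. Your claim that $X'$ and $Y'$ ``define two Dehn twist factorizations of the same mapping class $\Psi$'' skips this entirely. The paper spends its whole Step 3 (Definition \ref{Def:Conjugate}, Lemma \ref{Lemma:Conjugate}) showing that conjugating the monodromy by a Dehn twist $D^{\delta}_{K}$ can be implemented on the surgery diagram by inserting the canceling pair $K^{-\delta}(-1)\cup(\Phi^{-1}(K))^{\delta}(1)$ via a Reeb-flow isotopy, and then factors the conjugating map $\Psi$ into Lickorish twists and applies this one twist at a time. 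Without that machinery your argument does not close.

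A smaller point: your final step --- decomposing the passage between the two factorizations into commutation, braid, chain, and lantern relations --- is both unnecessary and an extra unproved dependency. Once $\widetilde{X}$ and $\widetilde{Y}$ lie on a common ribbon $R_{G}$ and satisfy $D_{\widetilde{X}}=D_{\widetilde{Y}}$ in $MCG(R_{G},\partial R_{G})$, replacing one by the other is a \emph{single} ribbon move by the definition in Section \ref{Sec:Kirby}; no presentation of the mapping class group by elementary relations is needed, and asserting that the listed relations suffice to connect any two factorizations is itself a nontrivial theorem (of Gervais--Luo type) that you would have to cite and adapt so that every intermediate curve is Legendrian realizable on the embedded page.
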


See the next section for an explanation of the notation in the above theorem.  To the author's knowledge, the only known results regarding the modification of contact surgery diagrams are Ding-Geiges' cancelation and handle-slide moves.  In Section \ref{Sec:KirbyEx} we will reinterpret these operations as ribbon moves.  There we also provide examples of braid- and chain-relation type moves.


\section{Notation and remarks on the methods}

While we will assume that the reader is familiar with the basics of contact manifolds, open book decompositions, and contact surgery, we will quickly recall some facts needed throughout the article.  See \cite{Etnyre:OBIntro} and \cite{OzbSt:SteinSurgery} for further details.

\subsection{Conventions}\label{Sec:Conv}

$\Rthree$ will refer to the contact structure determined by the 1-form $\lambda_{std}=dz-ydx$.  $\Sthree$ denotes the contact structure on the 3-sphere considered as the boundary of the 4-disk $\disk^{4}=\lbrace ||x||\leq 1\rbrace\subset \mathbb{R}^{4}$ with Liouville 1-form $\sum (x_{j}dy_{j}-y_{j}dx_{j})$.  As the complement of a point in $\Sthree$ is contactomorphic to $\Rthree$, we will describe knots and links in $\Sthree$ by their inclusion in $\Rthree$, and will always draw knots in the \emph{front projection}, i.e. the projection to the $(x,z)$-plane in $\Rthree$.

When $\Sigma$ is an oriented surface and $\zeta$ is a simple, closed curve on $\Sigma$, a $\pm$-Dehn twist along $\zeta$ will be denoted by $D^{\pm}_{\zeta}$.  When expressing compositions of Dehn twists as a product, they will be ordered in the way that is standard for compositions of morphisms,
\begin{equation*}
\prod_{1}^{n} D^{\delta_{j}}_{\zeta_{j}} = D^{\delta_{n}}_{\zeta_{n}} \circ \cdots \circ D^{\delta_{1}}_{\zeta_{1}},\quad\delta_{j}\in\lbrace +,-\rbrace.
\end{equation*}

When performing surgery on Legendrian knots, we always express coefficients with respect to the framing given by the contact structure.

\subsection{Contact surgery}

One appropriate notion of Dehn surgery for contact 3-manifolds is \emph{contact surgery} along Legendrian knots.  Originally described in \cite{DG:Surgery}, contact surgery generalizes Weinstein's \emph{Legendrian surgery} \cite{Weinstein:Handles} for 3-manifolds.

Let $L\subset\Mxi$ be a Legendrian knot in a contact 3-manifold.  Then $L$ admits a tubular neighborhood $N(L)$ such that if we frame $L$ with $\xi$ then $\partial N(L)$ is a convex surface in $\Mxi$ with exactly 2 dividing curves of slope $\infty$.  Here we consider a contact vector field pointing out of $N(L)$, equip $\partial N(L)$ with the boundary orientation, and compute slopes by taking (``meridian'',''longitude'') as an oriented basis of $H_{1}(\partial N(L);\mathbb{Z})$ with the longitude determined by the contact framing on $L$.  The dividing curves separate $\partial N(L)$ into two annuli $\partial N(L)^{+}$ and $\partial N(L)^{-}$ which can be identified with the closures of the positive and negative regions of the convex surface $\partial N(L)$, respectively. To perform \emph{contact $\frac{1}{k}$ surgery} ($k\in \mathbb{Z}\setminus \lbrace 0 \rbrace$) along $L$, remove $N(L)$ from $M$ and then glue it back in with the identity on $\partial N(L)^{-}$ and with $-k$ (right-handed) Dehn twists along $\partial N(L)^{+}$.  It follows from the gluing theory of convex surfaces that this operation uniquely determines a contact structure on the surgered manifold.  Contact $\frac{1}{k}$ surgery on $L$ is equivalent to contact $sgn(k)$ surgery on $|k|$ copies of $L$ pushed off along the Reeb vector field of some contact 1-form for $\xi$. See \cite{DG:LWall}.

We will be primarily interested in performing contact surgery along Legendrian knots in $\Sthree$.  It is easy to check that, in this case, contact $\frac{1}{k}$ surgery is topologically a $tb(K)+\frac{1}{k}$ surgery with respect to the Seifert framing.

\begin{defn}
A \emph{contact surgery diagram} consists of a front projection diagram of a Legendrian link $L\subset\Sthree$, all of whose connected components are labeled with rational numbers of the form $\frac{1}{k}$ with $k\in\mathbb{Z}$.  In the event that every connected component of $L$ is labeled $\pm 1$ we write $L=L^{+}\cup L^{-}$ where the components of $L^{\pm}$ all have coefficient $\pm 1$.
\end{defn}

\subsection{Supporting open books}\label{Sec:GirBook}

There is a simple way to modify an open book $(\Sigma,\Phi)$ which preserves the associated contact manifold, called \emph{positive stabilization}.  This operation consists of adding a one handle to the boundary of $\Sigma$ and precomposing the monodromy with a single positive Dehn twist about a simple closed curve which intersects the co-core of the new handle exactly once.  By carrying out this procedure as in Figure \ref{Fig:Stabilize}, we can negatively and positively stabilize Legendrian knots which live in the page of such an open book.

\begin{figure}[h]
	\begin{overpic}[scale=.7]{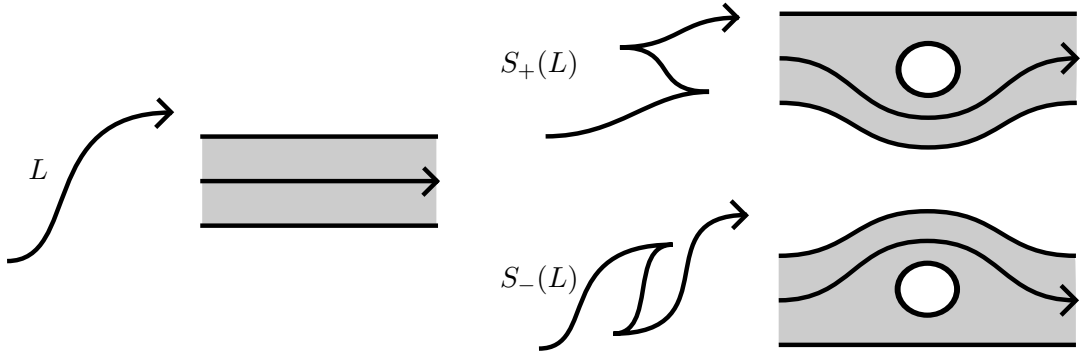}
        \put(2,16){$L$}
        \put(46,26){$S_{+}(L)$}
        \put(46,6){$S_{-}(L)$}
    \end{overpic}
	\caption{A Legendrian knot $L$, with its positive and negative stabilizations $S_{+}(L)$ and $S_{-}(L)$.  The gray surfaces represent parts of the page of an open book.  The open books on the right hand side correspond to stabilizations of the open book on the left.  The new monodromy is obtained from the old monodromy by precomposing with a positive Dehn twist about the new boundary component.}
    \label{Fig:Stabilize}
\end{figure}

The part of the proof of Theorem \ref{Thm:GirCor} which is useful for our purposes is the construction of a supporting open book from a contact manifold.  Following the exposition \cite{Etnyre:OBIntro}, we briefly outline Giroux's proof as it will guide the execution of the algorithm of Theorem \ref{Thm:AlgLink}.  The main idea is to consider cell decompositions of $M$ which have special contact geometric properties.

\begin{defn}[Giroux]\label{Def:ContactCell}
\be
\item A \emph{contact cell decomposition} of the pair $\Mxi$ is a presentation of $M$ as a cell complex such that the 1-skeleton is Legendrian, each 2-cell is convex, and each 3-cell is tight.
\item Let $L\subset\Mxi$ be Legendrian graph.  A \emph{ribbon} of $L$ is a compact, oriented surface $\Sigma\subset M$ such that
    \be
    \item $L$ is contained in $\Sigma$,
    \item there is a contact form $\alpha$ for $\Mxi$ whose Reeb vector field is everywhere positively transverse to $\Sigma$, and
    \item there is a vector field $X$ on $\Sigma$ which directs the characteristic foliation of $\Sigma$, is positively transverse to $\partial \Sigma$, and whose time-$t$ flow $\Phi^{t}_{X}$ satisfies $\cap_{(0,\infty)} \Phi^{-t}_{X}(\Sigma)=L$.
    \ee
\ee
\end{defn}

\begin{rmk}
Note if $\Sigma$ is the ribbon of a Legendrian graph $L$, then $L$ is necessarily contained in the characteristic foliation of $\Sigma$ and the boundary $\partial \Sigma$ -- equipped with the boundary orientation -- is a positive transverse link in $\Mxi$.  Provided such a surface $\Sigma$ and contact form $\alpha$ as in Definition \ref{Def:ContactCell}(2b), we can apply the flow of the Reeb vector field for $\alpha$ to find a neighborhood of $\Sigma$ of the form $[-\epsilon,\epsilon]\times \Sigma$ on which $\alpha=dz+ \alpha|_{T\Sigma}$ for some $\epsilon>0$, where $z$ is a coordinate on $[-\epsilon,\epsilon]$.  By rounding the corners of $[-\epsilon,\epsilon]\times\Sigma$ we obtain a \emph{contact handlebody} \cite{Giroux91} whose convex boundary has dividing set $\{0\}\times \Sigma$ and which naturally carries the structure of a ``half open book'' with page $\Sigma$ supporting $\xi$.  For more information, see \cite{Etnyre:OBIntro,Giroux91,OzbagciHandle}.
\end{rmk}

\begin{thm} \label{Thm:GRibbon}
Suppose that $\Mxi$ has a contact cell decomposition for which every 2-cell $\disk$ has $tb(\partial\disk,\disk)=-1$ and intersects the boundary of the ribbon $\Sigma$ of the 1-skeleton twice.  Then $\Sigma$ is the page of an open book decomposition supporting $\Mxi$.
\end{thm}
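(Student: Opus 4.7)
The plan is to decompose $M$ into two contact handlebodies glued along a convex surface, and to show each is a half-open-book with page $\Sigma$; their gluing then yields the desired supporting open book. By the remark preceding the statement, the ribbon $\Sigma$ has a collar $H_{+} \cong [-\epsilon,\epsilon] \times \Sigma$ (with corners rounded) on which $\alpha = dz + \alpha|_{T\Sigma}$. This $H_{+}$ is a contact handlebody whose convex boundary has dividing set isotopic to $\partial \Sigma$, and it carries the canonical half-open-book structure with page $\Sigma$ compatible with $\xi|_{H_{+}}$. Set $H_{-} := \overline{M \setminus H_{+}}$; it suffices to show that $H_{-}$ is also such a half-open-book with page $\Sigma$ matching $H_{+}$ along the common convex surface, since two halves glued this way form an open book supporting $\xi$ in the sense of Section \ref{Sec:GirBook}.

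For each 2-cell $\disk$, its boundary lies in $L \subset \Sigma \subset H_{+}$; after a $C^{0}$-small isotopy of $\partial H_{+}$, I may take $\disk$ transverse to $\partial H_{+}$, so that $D := \disk \cap H_{-}$ is a properly embedded surface in $H_{-}$ with $\partial D \subset \partial H_{-}$. Combining the hypotheses $tb(\partial \disk,\disk) = -1$ and $|\disk \cap \partial \Sigma| = 2$ with the convex structure on $\partial H_{-}$ (whose dividing set is a parallel copy of $\partial \Sigma$), one checks that $D$ is a disk bounded by a single arc on $\partial H_{-}$ meeting the dividing set transversely in two points; any spurious intersection circles can be discarded by innermost-disk arguments using tightness of the 3-cells they bound, together with Eliashberg's uniqueness of tight contact structures on $B^{3}$. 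The family of such $D$ is then a complete system of convex compressing disks for the handlebody $H_{-}$, each meeting the dividing set exactly twice.

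Cutting $H_{-}$ along these disks yields the closures of the 3-cells of the decomposition, which are tight 3-balls by hypothesis. Invoking tight $B^{3}$ uniqueness once more, the contact structure on each such ball is determined by its boundary dividing set, so reassembling shows that $H_{-}$ is contactomorphic to the standard half-open-book on $[-\epsilon,\epsilon]\times \Sigma$ with matching dividing set on the boundary. Gluing to $H_{+}$ along the common convex surface produces an open book decomposition of $M$ with page $\Sigma$, binding $\partial \Sigma$, and supporting contact structure isotopic to $\xi$.

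The main obstacle is the convex-surface analysis of the second paragraph: arranging simultaneously that every $\disk \cap H_{-}$ is a disk whose boundary arc meets the dividing set exactly twice, and that the resulting cut system separates $H_{-}$ into precisely the 3-cells of the decomposition. The $tb = -1$ hypothesis is doing the work of controlling the boundary dividing intersection, while the twice-intersection of each $\disk$ with $\partial \Sigma$ ensures these disks are compatible with the dividing set on $\partial H_{+}$. With those in place, the tightness of the 3-cells and Eliashberg's $B^{3}$ uniqueness complete the argument routinely.
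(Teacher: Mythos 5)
Your proposal is correct and takes essentially the same route as the proof the paper relies on: the paper does not argue Theorem \ref{Thm:GRibbon} directly but defers to Giroux's argument as presented in \cite[\S 4]{Etnyre:OBIntro}, which is precisely the decomposition $M=H_{+}\cup H_{-}$ into contact handlebodies, the analysis of the compressing disks $\disk\cap H_{-}$ against the dividing set (with $tb=-1$ and the two-point intersection with $\partial\Sigma$ playing exactly the roles you assign them), and the reassembly of the tight $3$-cells via Eliashberg's uniqueness that you outline. The one cosmetic slip is that $\partial(\disk\cap H_{-})$ is a closed curve on the convex surface $\partial H_{-}$ meeting the dividing set twice, not a single arc.
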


The proof -- which may be found in \cite[\S 4]{Etnyre:OBIntro} -- indicates that for every Legendrian link $L\subset M$ we can build an open book which contains $L$ in a single page by including it in the 1-skeleton of a contact cell decomposition.  Theorem \ref{Thm:AlgLink} is simply a systematic way of doing this for Legendrian links in $\Sthree$.  In the case of the unknot with $tb=-1$, Figure \ref{Fig:UnknotMovie} depicts the verification of the hypothesis of Theorem \ref{Thm:GRibbon} as well as the ``movie'' of the monodromy map.

\begin{figure}[h]
    \begin{overpic}[scale=.55, angle=-90]{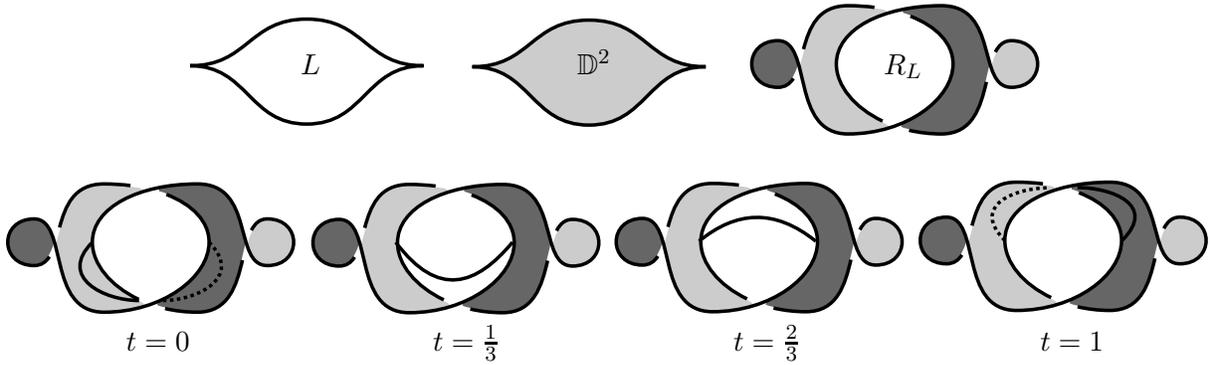}
	   \put(24.5,20){$L$}
       \put(47.5,20){$\mathbb{D}^{2}$}
       \put(73,20){$R_{L}$}
       \put(10,-3){$t=0$}
       \put(35.5,-3){$t=\frac{1}{3}$}
       \put(60.5,-3){$t=\frac{2}{3}$}
       \put(86,-3){$t=1$}
    \end{overpic}
    \vspace{5mm}
	\caption{On the upper left is a Legendrian unknot $L$ which is the 1-skeleton of a contact cell decomposition of $\Sthree$.  This cell decomposition has a single 2-cell, labeled $\mathbb{D}^{2}$ in the figure.  On the upper right is a ribbon of $L$ labeled $R_{L}$.  The bottom row shows the ``movie'' of the monodromy of the associated open book decomposition of $\Sthree$.  As the page is an annulus, we only need to analyze the image of a single arc.}
    \label{Fig:UnknotMovie}
\end{figure}

\subsection{Transverse push-offs}

Ribbons provide a natural way to associate a transverse knot to a Legendrian knot.

\begin{defn}
Let $L\subset\Mxi$ be an oriented Legendrian knot with ribbon $R_{L}$.  The \emph{positive transverse push-off} of $L$ is the component $T_{+}(L)$ of $\partial R_{L}$ on which the boundary orientation coincides with the orientation of $L$.  The \emph{negative transverse push-off} of $L$, denoted $T_{-}(L)$ is defined to be the positive transverse push-off of $-L$.
\end{defn}

\begin{figure}
	\begin{overpic}[scale=.7]{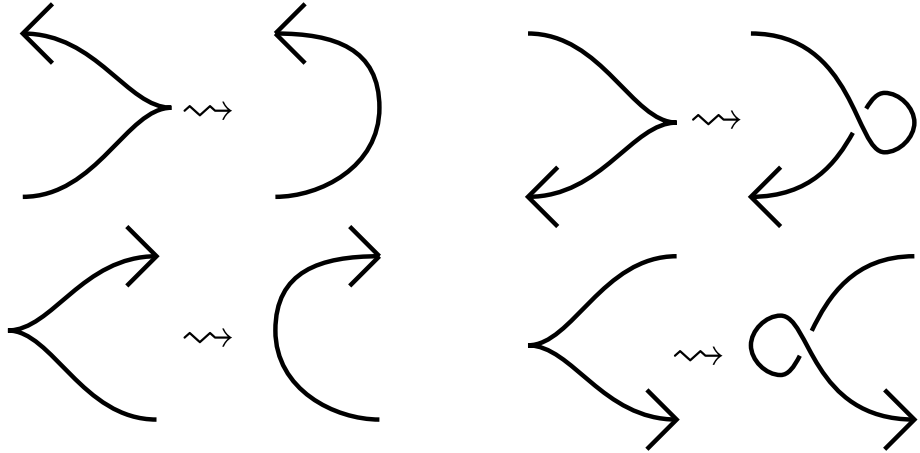}
	   \put(19,11){\huge{$\rsa$}}
       \put(73,9){\huge{$\rsa$}}
       \put(19,36){\huge{$\rsa$}}
       \put(75,35){\huge{$\rsa$}}
    \end{overpic}
	\caption{Drawing a positive transverse push-off of a Legendrian knot in the front projection near cusps.}
    \label{Fig:TransPush}
\end{figure}

Note that the positive (and negative) transverse push-offs of a Legendrian knot are positive transverse knots in $\Mxi$, isotopic to $L$ (and $-L$ respectively) as topological knots, and are uniquely determined up to transverse isotopy.  See Figure \ref{Fig:TransPush}.

\subsection{Legendrian realization, surgery curves as Dehn twists, and abuses of language}\label{Sec:LegRel}

It will be important to realize Legendrian knots on pages of open book decompositions of $\Sthree$.  Once this is achieved it is easy to construct supporting open books for contact manifolds provided contact surgery presentations and vice versa.  The essential tool in carrying out such constructions is the Legendrian realization principle \cite{Honda:TightClassI}.  We state a weak version especially suited for our purposes.

\begin{thm}
Let $C$ be a simple closed curve embedded into the interior of the ribbon $\Sigma$ of some Legendrian graph in the contact manifold $\Mxi$.  Suppose that $C$ does not bound a subsurface of $\Sigma$.  Then there is a boundary-relative isotopy of $\Sigma$ through a $[0,1]$-parameter family of surfaces $\Sigma_{t}$ such that
\be
\item the $\Sigma_{t}$ are contained in an arbitrarily small neighborhood of $\Sigma=\Sigma_{0}$, and
\item the image $C_{1}$ of $C=C_{0}$ under the isotopy is Legendrian.
\ee
\end{thm}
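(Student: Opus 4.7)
The plan is to reduce the problem to a standard application of Honda's Legendrian realization principle after doubling $\Sigma$ across its transverse boundary. By the remark following Definition~\ref{Def:ContactCell}, we may choose a contact form $\alpha$ for $\xi$ whose Reeb field is positively transverse to $\Sigma$ so that, on a tubular neighborhood of the form $[-\epsilon,\epsilon]\times\Sigma$, one has $\alpha = dz + \alpha|_{T\Sigma}$. Rounding the corners of this product yields a contact handlebody whose boundary $\hat\Sigma$ is the topological double of $\Sigma$ along $\partial\Sigma$. This $\hat\Sigma$ is a closed convex surface, and its dividing set $\Gamma_{\hat\Sigma}$ is the seam copy of $\partial\Sigma$; under this identification, the original $\Sigma$ is the closure of one of the two components of $\hat\Sigma\setminus\Gamma_{\hat\Sigma}$, and $C$ sits in the interior of that component.

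The next step is to verify that $C$ satisfies Honda's non-isolating hypothesis on $\hat\Sigma$. The complement $\hat\Sigma\setminus(\Gamma_{\hat\Sigma}\cup C)$ consists of the mirror copy of $\Sigma$, whose entire boundary lies on $\Gamma_{\hat\Sigma}$, together with the connected components of $\Sigma\setminus C$. For each component $R$ of $\Sigma\setminus C$, the hypothesis that $C$ does not bound a subsurface of $\Sigma$ forces $\partial R$ to contain a portion of $\partial\Sigma = \Gamma_{\hat\Sigma}$; otherwise $\overline{R}$ would be a subsurface of $\Sigma$ bounded by $C$. This gives the non-isolating condition. Honda's LeRP now produces an ambient contact isotopy $\phi_t$ of $M$, supported in an arbitrarily small neighborhood of $\hat\Sigma$, such that $\phi_t(\hat\Sigma)$ remains convex with dividing set $\Gamma_{\hat\Sigma}$ and such that $\phi_1(C)$ is Legendrian. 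Setting $\Sigma_t := \phi_t(\Sigma)$ gives the desired family, with condition (1) immediate from the small support of $\phi_t$.

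The remaining subtlety is boundary-relativity. Since $C$ lies in the interior of $\Sigma$, one can take the LeRP perturbation to be compactly supported in a small neighborhood of $C$ disjoint from $\Gamma_{\hat\Sigma} = \partial\Sigma$; then $\phi_t$ is the identity near $\partial\Sigma$, so $\Sigma_t$ agrees with $\Sigma$ on $\partial\Sigma$ for all $t$. I expect the main obstacle to be the careful verification of the non-isolating condition together with the housekeeping for the doubled convex surface; once these are in place, the statement is an immediate corollary of Honda's theorem.
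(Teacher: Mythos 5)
Your proposal is correct and follows essentially the same route as the paper: both pass to the convex boundary of a tubular neighborhood of $\Sigma$ (the double of $\Sigma$, with dividing set the seam copy of $\partial\Sigma$) and then invoke Honda's Legendrian realization principle, with the hypothesis that $C$ bounds no subsurface supplying the non-isolating condition. You spell out the non-isolating verification that the paper's sketch leaves implicit, which is a welcome addition.
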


\begin{proof}[Sketch of the proof]
We may identify an arbitrarily small tubular neighborhood $N(\Sigma)$ of $\Sigma$ with $\Sigma\times[-\epsilon,\epsilon]$.  It is easy to construct a contact vector field transverse to the closed surface $\partial N(\Sigma) = (\Sigma\times\lbrace -\epsilon,\epsilon\rbrace) \cup (\partial\Sigma\times[-\epsilon,\epsilon])$ for which the associated dividing set is isotopic to $\partial\Sigma\times\lbrace 0\rbrace$.  Then our hypothesis guarantees that the usual Legendrian realization principle applies to $C\subset\partial N(\Sigma)$.  See \cite[Theorem 3.7]{Honda:TightClassI} for details.
\end{proof}

We will often need to find Legendrian representatives of collections of curves which are not \emph{simultaneously} Legendrian realizable.  This may be resolved by placing individual curves on different surfaces in a 1-parameter family.  For simplicity, this generalized procedure will be referred to as Legendrian realization.  For example, see Figure \ref{Fig:Lantern} in which we have Legendrian representatives of the four boundary components of a 3-punctured disk.

The preceding paragraph and above theorem indicate the necessity of another abuse of notation.  Throughout this paper, whenever we speak of the page of some open book decomposition we will be referring to its (boundary relative) isotopy class.  Thus when we say a knot is contained in some surface embedded in $\Sthree$, it is meant that the surface may be isotoped as in the statement of the above theorem so that it contains the knot.

The other principal ingredient in all of our constructions is the following statement which equates Dehn twists and contact surgeries.

\begin{thm}\label{Thm:TwistSurgery}
If $K\subset \Sigma$ is a Legendrian knot contained in the page of an open book decomposition $\AOB$ supporting the contact manifold $\Mxi$, then the manifold obtained by performing contact $\pm1$ surgery on $K$  is supported by the open book decomposition  $(\Sigma, \Phi \circ D^{\mp}_{K})$.
\end{thm}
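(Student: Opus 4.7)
The plan is to argue locally in a standard tubular neighborhood of $K$, compare the page framing with the contact framing, reinterpret the contact surgery as a Dehn-twist modification of the open book, and then invoke uniqueness of the compatible contact structure to conclude.

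First, I would verify that the framing of $K$ coming from the page $\Sigma$ coincides with the contact framing. Because $\alpha$ is compatible with $\AOB$, its Reeb vector field $R_\alpha$ is positively transverse to the interior of $\Sigma$, so in particular transverse to $\Sigma$ along $K$. Since $K\subset\Sigma$ is Legendrian, one may choose coordinates $(\theta,x,y)$ on a tubular neighborhood $N(K)\cong S^{1}\times D^{2}$ so that $K=\{x=y=0\}$, $\Sigma\cap N(K)=\{y=0\}$, and $\alpha$ takes a standard form (for instance $d\theta+x\,dy$ after a small isotopy and rescaling of $\alpha$). In these coordinates a page longitude and a contact longitude on $\partial N(K)$ are parallel, so the two framings agree. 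This is the standard neighborhood theorem for a Legendrian knot sitting on a Reeb-transverse surface, and it is the only nontrivial ingredient; everything else is bookkeeping.

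Next, I would identify the topological effect of the surgery on the open book. With the framings identified, contact $\mp 1$ surgery on $K$ is, topologically, removal of $N(K)$ followed by regluing via $\pm 1$ Dehn twists along the page longitude. Shrinking $N(K)$ so that it sits in the mapping-torus model as $A\times(-\epsilon,\epsilon)$ for an annular neighborhood $A\subset\Sigma$ of $K$, and arranging (by conjugating within its isotopy class) that the chosen representative of $\Phi$ is the identity on $A$, one sees that this regluing is naturally equivalent, at the level of the mapping torus, to precomposing $\Phi$ with $D^{\pm}_{K}$. A short check of the handedness of the Dehn twist pins down the sign: contact $-1$ surgery yields $\Phi\circ D^{+}_{K}$ and contact $+1$ surgery yields $\Phi\circ D^{-}_{K}$, as claimed.

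Finally, I would confirm that the contact structure produced by the surgery is the one supported by $(\Sigma,\Phi\circ D^{\mp}_{K})$. Away from $N(K)$ nothing has been changed, so the two contact structures visibly agree there. Inside $N(K)$, both the surgery contact structure and the structure supported by the modified open book admit a compatible relative open book with page $A$ and monodromy determined by the appropriate power of $D_{K}$; by the standard neighborhood theorem for Legendrian knots (or, equivalently, by Giroux's uniqueness statement in Theorem \ref{Thm:GirCor} applied relative to the boundary), two contact structures on $N(K)$ that agree on $\partial N(K)$ and are both compatible with the same relative open book are isotopic rel boundary. Gluing the two identifications yields a contactomorphism between the surgered manifold and $(M_{(\Sigma,\Phi\circ D^{\mp}_{K})},\xi_{(\Sigma,\Phi\circ D^{\mp}_{K})})$. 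The main obstacle is the framing comparison in the first step: once that is in hand, the topological and contact-geometric identifications both reduce to standard model computations.
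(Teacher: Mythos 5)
Your argument is correct and is essentially the standard proof: the paper itself does not prove this statement but defers to \cite[Theorem 5.7]{Etnyre:OBIntro} and \cite[Proposition 3]{Geiges:GeomTop}, and your three steps (page framing equals contact framing, surgery along a page curve equals precomposition by a Dehn twist at the level of the mapping torus, and uniqueness of the contact structure on the reglued solid torus via the standard Legendrian neighborhood theorem) are exactly the line of reasoning in those references, with the signs checked correctly. The only cosmetic caveat is that invoking ``Giroux's uniqueness relative to the boundary'' is not needed and is less elementary than what you already have: the uniqueness of the tight structure on the surgered solid torus with the prescribed convex boundary data suffices.
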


Proofs of Theorem \ref{Thm:TwistSurgery} can be found in \cite[Theorem 5.7]{Etnyre:OBIntro} and \cite[Proposition 3]{Geiges:GeomTop}.


\section{Surgery diagrams from open books} \label{Sec:SurgBook}

\begin{figure}[h]
	\begin{overpic}[angle=-90, scale=.55]{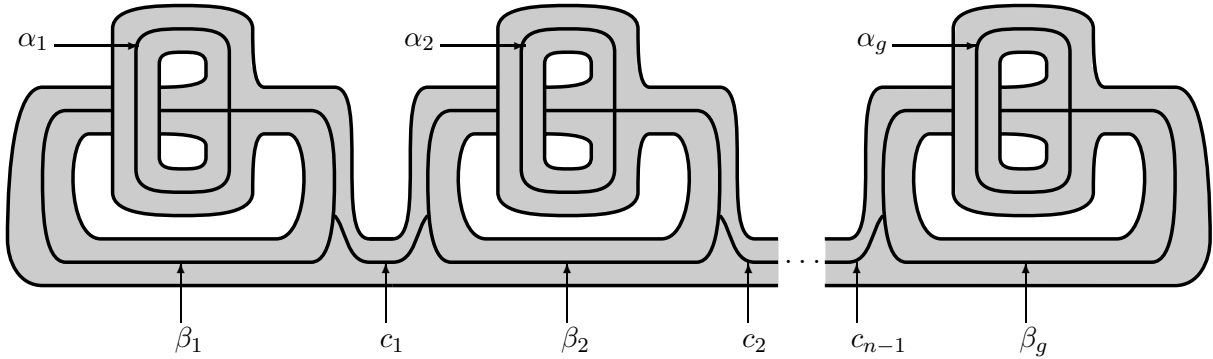}
	   \put(64.5,2){\large{$\dots$}}
       \put(4,20){\vector(1,0){7}}
            \put(1,20){$\alpha_{1}$}
       \put(36,20){\vector(1,0){7}}
            \put(33,20){$\alpha_{2}$}
       \put(73.5,20){\vector(1,0){7}}
            \put(70.5,20){$\alpha_{g}$}
       \put(14.5,-3){\vector(0,1){5}}
            \put(14,-5){$\beta_{1}$}
       \put(46.5,-3){\vector(0,1){5}}
            \put(46,-5){$\beta_{2}$}
       \put(84.5,-3){\vector(0,1){5}}
            \put(84,-5){$\beta_{g}$}
       \put(31.5,-3){\vector(0,1){5}}
            \put(31,-5){$c_{1}$}
       \put(61.5,-3){\vector(0,1){5}}
            \put(61,-5){$c_{2}$}
       \put(70.5,-3){\vector(0,1){5}}
            \put(70,-5){$c_{n-1}$}
    \end{overpic}
    \vspace{7mm}
	\caption{A compact oriented surface of genus $g$ with a single boundary component.  Define $\gamma_{j}$, for $j=1,\dots,g-1$, as follows:  Let $N_{j}$ be a tubular neighborhood of $\beta_{j}\cup c_{j}\cup\beta_{j+1}$.  Take $\gamma_{j}$ to be the boundary component of $N_{j}$ which is not homotopic to either $\beta_{j}$ or $\beta_{j+1}$.  Then Dehn twists about the $\alpha_{j},\beta_{j}$ and $\gamma_{j}$ represent Lickorish generators of the mapping class group of the surface.}
    \label{Fig:LickorishCurves}
\end{figure}

Suppose that $\Mxi$ is a contact 3-manifold supported by the open book $\AOB$, where $\Sigma$ is a genus $g$ surface with a single boundary component and the monodromy $\Phi$ is expressed as a product of positive and negative Dehn twists on the Lickorish generators described in Figure \ref{Fig:LickorishCurves}:
\begin{equation*}
\Phi=\prod_{k=1}^{n} D_{\zeta_{k}}^{\delta_{k}}\quad\text{where}\quad\delta_{k}\in\lbrace +,-\rbrace,\quad\zeta_{k}\in\lbrace \alpha_{j},\beta_{j},\gamma_{j}\rbrace.
\end{equation*}

\subsection{Algorithm 1}

The following algorithm describes how to obtain a contact surgery diagram of $\Mxi$ from this data as in Theorem \ref{Thm:Alg}.  In the next section we will show that the surgery diagram obtained presents $\Mxi$.\\

\noindent\textbf{Step 1 (Embedding $\Sigma$ in $\Sthree$).}  The curves $\alpha_{j},\beta_{j}$ and $c_{j}$ can be embedded into $\Sthree$ as described in Figure \ref{Fig:EmbedSkeleton} so that the ribbon of the graph $(\cup\alpha_{j})\cup(\cup\beta_{j})\cup(\cup c_{j})$ is diffeomorphic to $\Sigma$.  We shall henceforth consider $\Sigma$ as being contained in $\Sthree$ (or $\Rthree$).

\begin{figure}[h]
	\begin{overpic}[scale=.7]{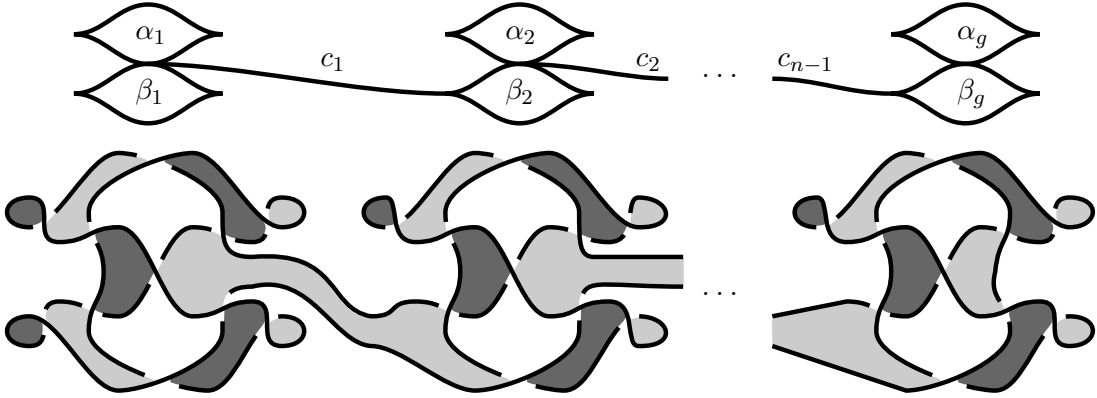}
	   \put(64,29){\large{$\dots$}}
       \put(64,9){\large{$\dots$}}
       \put(29,30){$c_{1}$}
       \put(58,30){$c_{2}$}
       \put(71,30){$c_{n-1}$}
       \put(12,32.5){$\alpha_{1}$}
       \put(46,32.5){$\alpha_{2}$}
       \put(87.5,32.5){$\alpha_{g}$}
       \put(12,27){$\beta_{1}$}
       \put(46,27){$\beta_{2}$}
       \put(87.5,27){$\beta_{g}$}
    \end{overpic}
	\caption{On top is the image of the graph $(\cup\alpha_{j})\cup(\cup\beta_{j})\cup(\cup c_{j})$.  This gives a contact cell decomposition of $\Sthree$.  A page of the associated open book for $\Sthree$ is shown on the bottom.  Here everything is drawn in the $(x,z)$-projection.}
    \label{Fig:EmbedSkeleton}
\end{figure}

After rescaling the variable $z$ on $\Rthree$ we may assume that the mapping
\begin{equation*}
[-1,1]\times\Sigma\rightarrow\Rthree,\quad (t,x)\mapsto x+(0,0,t)
\end{equation*}
is an embedding.  For $t\in [-1,1]$ and $\zeta\subset\Sigma$ we write $\zeta(t)=\zeta+(0,0,t)$.  The curves $\gamma_{j}(1)$ may be drawn in the front projection as in Figure \ref{Fig:GammaEmbed}

\begin{figure}[h]
	\begin{overpic}[scale=.7]{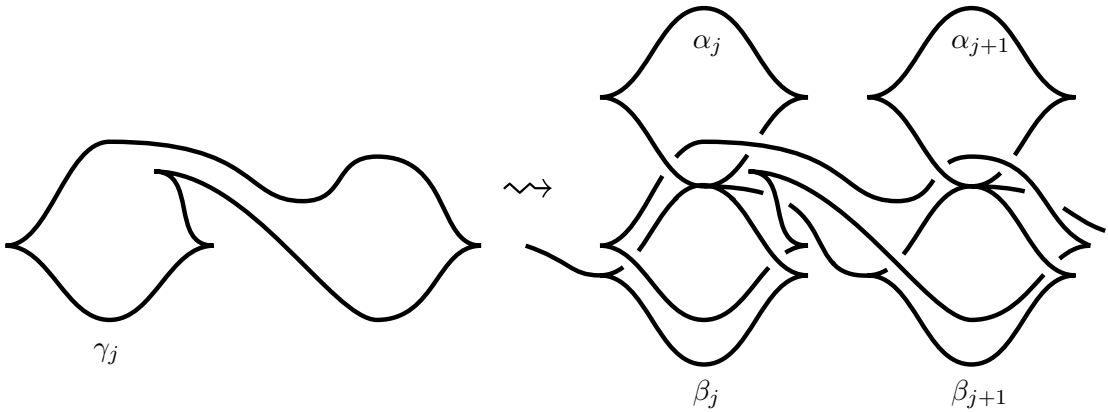}
	   \put(45,15){\huge{$\rsa$}}
       \put(8,1){$\gamma_{j}$}
       \put(62.5,-3){$\beta_{j}$}
       \put(86,-3){$\beta_{j+1}$}
       \put(62.5,29){$\alpha_{j}$}
       \put(86,29){$\alpha_{j+1}$}
    \end{overpic}
    \vspace{5mm}
	\caption{The curve $\gamma_{j}$ pushed off of the graph $(\cup\alpha_{j})\cup(\cup\beta_{j})\cup(\cup c_{j})$.}
    \label{Fig:GammaEmbed}
\end{figure}

Now we are ready to begin drawing a surgery diagram for $\Mxi$.\\

\noindent\textbf{Step 2 (Monodromy correction).}  Draw the curves $\alpha_{j}(0)$ and $\beta_{j}(-1)$ for $j=1,\dots,n$ in the front projection.  Label each curve with a surgery coefficient $+1$.\\

\noindent\textbf{Step j+2 for $j=1,\dots,n$ (Adding Dehn twists).}  Draw the curve $\zeta_{j}(j/n)$ in the diagram decorated with surgery coefficient $-\delta_{j}$.\\

\begin{figure}[h]
	\begin{overpic}[scale=.7]{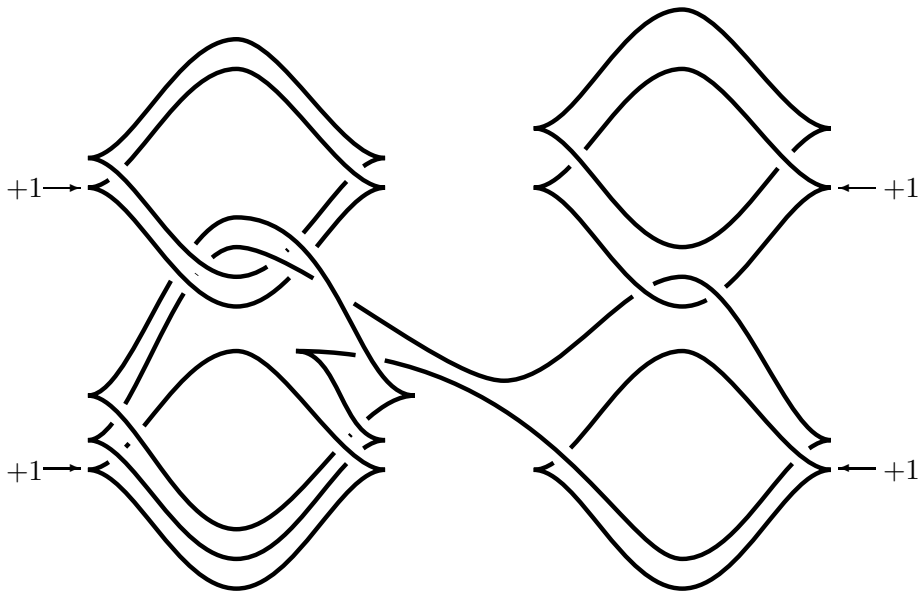}
	   \put(-6,16.5){\vector(1,0){5}}
            \put(-11,15){$+1$}
       \put(-6,54.25){\vector(1,0){5}}
            \put(-11,53){$+1$}
       \put(106,16.5){\vector(-1,0){5}}
            \put(107,53){$+1$}
       \put(106,54.25){\vector(-1,0){5}}
            \put(107,15){$+1$}
    \end{overpic}
    \vspace{5mm}
	\caption{A surgery diagram obtained from Algorithm 1.  The input is an open book whose page is a genus 2 surface with connected binding and whose monodromy is $\Phi=D^{+}_{\alpha_{2}}\circ D^{+}_{\beta_{1}}\circ D^{+}_{\gamma_{1}}\circ D^{+}_{\alpha_{1}}$.  The curves with surgery coefficient $+1$ come from Step 2 of Algorithm 1.  All other curves have surgery coefficient $-1$.}
    \label{Fig:SurgeryExample}
\end{figure}

The algorithm is now complete.  For an example of a completed diagram, see Figure \ref{Fig:SurgeryExample}.

\subsection{Justification of Algorithm 1}

 As in the previous section we consider the contact manifold $\Mxi$, presented as the open book $\AOB$ whose binding is connected.  Suppose that $\Phi$ is presented as a product of positive and negative Dehn twists on the Lickorish generators:
\begin{equation*}
\Phi = \prod_{1}^{n} D^{\delta_{j}}_{\zeta_{j}},
\end{equation*}
for some $\delta_{j}\in \lbrace +,- \rbrace$, $\zeta_{j}\in \lbrace \alpha_{i}, \beta_{i}, \gamma_{i} \rbrace$, and $n\in\mathbb{N}$.

\begin{lemma} \label{Lemma:Embed}
Let $\Sigma$ be a compact surface of genus $g$ with a single boundary component, and let $\alpha_{i}, \beta_{j}$, and $\gamma_{j}$ denote the collection of Lickorish generators as shown in Figure \ref{Fig:LickorishCurves}, respectively.  Then there is an embedding $\Psi$ of $\Sigma$ into $\Sthree$ such that
\be
\item $\Psi (\Sigma)$ is the page of an open book decomposition supporting $\Sthree$,
\item $\Psi$ sends the $\alpha_{i}$ and $\beta_{j}$ to Legendrian unknots with $tb=-1$, and the $\gamma_{j}$ to Legendrian unknots with $tb=-2$, and
\item the monodromy $\Phi_{\Psi}$ of the associated open book decomposition, $(\Psi(\Sigma), \Phi_{\Sthree})$, admits the Dehn twist factorization
\begin{equation} \label{Eq:OBMono}
\Phi_{\Psi} = \prod _{1}^{g} D_{\beta_{i}}\circ D_{\alpha_{i}}.
\end{equation}
\ee
\end{lemma}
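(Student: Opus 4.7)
The plan is to construct $\Psi$ explicitly as in Figure \ref{Fig:EmbedSkeleton}, verify the Thurston--Bennequin conditions by inspection of the front projection, invoke Theorem \ref{Thm:GRibbon} to show that the resulting ribbon is the page of a supporting open book of $\Sthree$, and then identify the monodromy factorization by induction on $g$.

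First, embed the Legendrian graph $G = (\bigcup \alpha_i)\cup(\bigcup \beta_j)\cup(\bigcup c_j)$ into $\Rthree\subset\Sthree$ in the front projection exactly as in Figure \ref{Fig:EmbedSkeleton}, realizing each $\alpha_i$ and $\beta_j$ as a copy of the standard Legendrian unknot from Figure \ref{Fig:UnknotMovie} and each $c_j$ as a short Legendrian arc connecting the appropriate pair.  Let $\Psi(\Sigma)$ be the blackboard-framed ribbon of $G$.  Claim (2) then follows from the front-projection formula $tb = w - \frac{1}{2}c$: each $\alpha_i$ and $\beta_j$ has two cusps and no self-crossings, giving $tb = -1$, while each $\gamma_j$, drawn as in Figure \ref{Fig:GammaEmbed}, has four cusps and self-crossings whose signs cancel, giving $tb = -2$.

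For (1) I complete $G$ to a contact cell decomposition of $\Sthree$ by attaching, for each $\alpha_i$ and $\beta_j$ (and where convenient for each $c_j$), a convex meridional disk meeting $\partial\Psi(\Sigma)$ in exactly two arcs.  These are the standard compressing disks of the handlebody complementary to $\Psi(\Sigma)\subset \Sthree$, and each is bounded by a Legendrian unknot with $tb = -1$.  The remaining 3-cell is a ball, hence tight, so Theorem \ref{Thm:GRibbon} delivers (1).

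Finally for (3) I induct on $g$, using the stabilization interpretation outlined in Section \ref{Sec:GirBook}.  The base case $g=0$ is the disk open book of $\Sthree$ with trivial monodromy.  In the inductive step, enlarging the configuration from genus $g-1$ to genus $g$ amounts to adjoining $c_{g-1}$, $\alpha_g$, and $\beta_g$; at the level of ribbons this is two consecutive positive stabilizations of the form shown in Figure \ref{Fig:Stabilize}.  Each stabilization preserves $\Sthree$ and composes the monodromy on the left with a positive Dehn twist about the core of the newly attached band.  Once one checks that these two new cores are boundary-relative isotopic on $\Psi(\Sigma)$ to $\alpha_g$ and $\beta_g$ respectively, the induction yields
\[
\Phi_{\Psi} \;=\; \prod_{i=1}^{g} D_{\beta_i}\circ D_{\alpha_i}.
\]
The main obstacle is precisely this last isotopy check: the stabilization curves are characterized only up to the topological condition that each intersect the co-core of the corresponding new band exactly once, and one must verify that with the embedding of Figure \ref{Fig:EmbedSkeleton} they can be chosen to agree with the standard Lickorish curves $\alpha_g$ and $\beta_g$.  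This is a finite surface-topology verification driven directly by the picture.
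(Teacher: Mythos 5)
Your proposal is correct in substance and agrees with the paper on parts (1) and (2): both arguments take the explicit front-projection embedding of the graph $(\cup\alpha_{j})\cup(\cup\beta_{j})\cup(\cup c_{j})$ from Figure \ref{Fig:EmbedSkeleton}, read off the Thurston--Bennequin numbers from the diagram, and obtain the supporting open book by exhibiting a contact cell decomposition and applying Theorem \ref{Thm:GRibbon} (the paper routes this through the general justification of Algorithm 2 in Section \ref{Sec:Justification}, with the elementary disks playing the role of your meridional compressing disks --- note these should meet $\partial\Psi(\Sigma)$ in two \emph{points}, not two arcs). Where you genuinely diverge is part (3): the paper simply cites Theorem \ref{Thm:Mono}, which computes the monodromy directly as the ordered product of positive Dehn twists about the boundaries of the elementary disks --- here exactly the $\alpha_{i}$ and $\beta_{i}$ --- so Equation \ref{Eq:OBMono} falls out with no further work. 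Your alternative is an induction on $g$ realizing the embedded surface as $2g$ successive positive stabilizations of the disk open book. That route is standard and workable, and has the virtue of not requiring the machinery of Section \ref{Sec:Mono}; its cost is exactly the step you flag but do not carry out, namely verifying that the stabilization curves produced by Figure \ref{Fig:Stabilize} can be taken, on the embedded ribbon of Figure \ref{Fig:EmbedSkeleton}, to be boundary-relatively isotopic to the Lickorish curves $\alpha_{g}$ and $\beta_{g}$ (and that the embedded surface built by stabilizing agrees with the ribbon of $G$). Since a positive stabilization only constrains its twist curve to meet the co-core of the new band once, different choices give different monodromy factorizations, so this identification is genuinely load-bearing and should be checked explicitly --- or avoided altogether by deducing (3) from Theorem \ref{Thm:Mono} as the paper does.
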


\begin{proof}
Consider the Legendrian graph $G$ and surface $\Sigma'$ of Figure \ref{Fig:EmbedSkeleton}.  It follows from the discussion in Section \ref{Sec:Justification} that the surface $\Sigma$ in Figure \ref{Fig:EmbedSkeleton} is a ribbon of $G$, and that $\Sigma'$ is a page of an open book supporting $\Sthree$.  From the diagram it is clear that $\Sigma'$ is diffeomorphic to $\Sigma$ and that the curves $\alpha_{j},\beta_{j}$ and $c_{j}$ are embedded as desired.  Call this diffeomorphism $\Psi$.  The Equation \ref{Eq:OBMono} follows from Theorem \ref{Thm:Mono}.
\end{proof}

\begin{prop}
In the notation of Step 1 of Algorithm 1, $\Mxi$ is equal to the contact manifold obtained by contact Dehn surgery on the link $L=L^{+}\cup L^{-}$ where
\begin{equation} \label{Eq:Surgery}
\begin{gathered}
L^{+} = \left( \bigcup _{\delta_{j}=-} \Psi(\zeta_{j})(j/n)\right)\cup\left( \bigcup_{1}^{g} \Psi(\beta_{i})(-1)\right)\cup\left(\bigcup_{1}^{g} \Psi(\alpha_{i})(0)\right) \\
\text{and}\quad L^{-} = \bigcup _{\delta_{j}=+} \Psi(\zeta_{j})(j/n).
\end{gathered}
\end{equation}
\end{prop}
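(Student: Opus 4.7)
The plan is to start from the open book $(\Psi(\Sigma), \Phi_\Psi)$ of $\Sthree$ furnished by Lemma~\ref{Lemma:Embed}, apply Theorem~\ref{Thm:TwistSurgery} iteratively to each component of $L$, and show that the resulting monodromy is conjugate in the mapping class group of $\Sigma$ to the target $\Phi$. Since conjugate monodromies yield equivalent open books (and hence diffeomorphic contact manifolds) and $(\Sigma,\Phi)$ supports $\Mxi$ by hypothesis, this will establish the proposition.

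First I would order the components of $L$ by the $t$-coordinate of their embedding and process them sequentially, from highest $t$ down to lowest. Each such component is a vertical translate of a Legendrian curve contained in $\Psi(\Sigma)$, and vertical translation in $\Rthree$ preserves Legendrianness; moreover, each contact $\pm 1$ surgery changes only the monodromy and not the topological type of the page, so after each step the next curve on the list can be Legendrian-isotoped into the page of the then-current open book without altering its isotopy class there. Thus Theorem~\ref{Thm:TwistSurgery} applies at each step, composing a Dehn twist on the right of the evolving monodromy.

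Carrying this out, processing the curves $\zeta_j(j/n)$ in the order $j=n,n-1,\dots,1$ (each with coefficient $-\delta_j$) composes $D^{\delta_n}_{\zeta_n}\circ\cdots\circ D^{\delta_1}_{\zeta_1} = \Phi$ onto the right of $\Phi_\Psi$; then processing the $\alpha_i(0)$'s and subsequently the $\beta_i(-1)$'s (each with coefficient $+1$) composes $\prod_{i=1}^g D^-_{\alpha_i}$ and $\prod_{i=1}^g D^-_{\beta_i}$ on the right, where the order within each height level is immaterial since the $\alpha_i$'s are pairwise disjoint and so are the $\beta_i$'s. The final monodromy is therefore
\[
\Phi_{\mathrm{new}} \;=\; \Phi_\Psi \circ \Phi \circ \left(\prod_{i=1}^g D^-_{\alpha_i}\right) \circ \left(\prod_{i=1}^g D^-_{\beta_i}\right).
\]
Since $\alpha_i \cap \beta_j = \emptyset$ whenever $i\neq j$, the Dehn twists $D^\pm_{\alpha_i}$ and $D^\pm_{\beta_j}$ commute for $i\neq j$, so $\Phi_\Psi = \left(\prod_{i=1}^g D^+_{\beta_i}\right) \circ \left(\prod_{i=1}^g D^+_{\alpha_i}\right)$. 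The inverse of this expression is exactly the trailing factor in $\Phi_{\mathrm{new}}$, giving $\Phi_{\mathrm{new}} = \Phi_\Psi \circ \Phi \circ \Phi_\Psi^{-1}$, a conjugate of $\Phi$.

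The main obstacle I anticipate is the ordering bookkeeping in the second step: justifying that the $t$-coordinate of each component correctly records the order in which its Dehn twist accumulates on the right of the monodromy under iterated application of Theorem~\ref{Thm:TwistSurgery}, and that at every intermediate stage the next surgery curve can be isotoped into the page of the then-current open book as the ``next'' curve to be acted upon, without altering its isotopy class on that page. Once this ordering is pinned down, the remainder is the routine commutativity calculation in the mapping class group of $\Sigma$ carried out above.
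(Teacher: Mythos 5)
Your proposal is correct and follows essentially the same route as the paper: start from the open book $(\Psi(\Sigma),\Phi_{\Psi})$ of Lemma \ref{Lemma:Embed}, apply Theorem \ref{Thm:TwistSurgery} to each surgery curve with the height coordinate dictating the order of composition, and observe that the $\alpha_{i}(0)$ and $\beta_{i}(-1)$ surgeries cancel $\Phi_{\Psi}$ while the $\zeta_{j}(j/n)$ surgeries rebuild $\Phi$. The only cosmetic difference is that the paper cancels $\Phi_{\Psi}$ first and lands on $(\Sigma,\Phi)$ on the nose, whereas your single top-down pass produces the conjugate $\Phi_{\Psi}\circ\Phi\circ\Phi_{\Psi}^{-1}$, which you correctly note supports the same contact manifold.
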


\begin{proof}
We begin by considering the open book decomposition $(\Psi(\Sigma), \Phi_{\Sthree})$ and the monodromy factorization given by Equation \ref{Eq:OBMono}.  Note that by our construction, all of the Lickorish curves are now realized as Legendrian knots on distinct pages of this open book decomposition.  Applying contact $+1$ surgeries along the $\beta_{i}$ on $\Psi(\Sigma)_{-1}$ and along the $\alpha_{i}$ on $\Psi(\Sigma)_{0}$, we obtain an open book decomposition whose page is $\Sigma$ and whose monodromy is the identity map as the collection of these surgeries cancels the Dehn twists from Equation \ref{Eq:OBMono}.  Applying the remaining surgeries in the fashion specified by the statement of the theorem will recreate $\AOB$ which supports $\Mxi$ by Theorem \ref{Thm:TwistSurgery}.
\end{proof}

Now the proof of Theorem \ref{Thm:Alg} is complete.  If $\Mxi$ admits a compatible open book decomposition modeled on a torus with a single disk removed, we can do better:

\begin{cor}\label{Cor:PuncturedTorus}
Let $\Mxi$ be a contact 3-manifold which admits a supporting open book $\AOB$ for which $\Sigma$ is a torus with a single disk removed.  Then $\Mxi$ is obtained by contact surgery on a link of Legendrian unknots, all of which have $tb=-1$ and such that every two component sub-link is either a Hopf link or an unlink.
\end{cor}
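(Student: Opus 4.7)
The plan is to specialize Algorithm 1 of Theorem \ref{Thm:Alg} to the genus-one setting, exploiting the fact that the $\gamma$-type Lickorish generators are absent. In Figure \ref{Fig:LickorishCurves} the curves $\gamma_j$ are indexed by $j = 1, \ldots, g-1$, which is empty when $g = 1$, so the mapping class group of the once-punctured torus is generated by $D_{\alpha_1}$ and $D_{\beta_1}$ alone. I may therefore factor the monodromy $\Phi$ of $\AOB$ as a product of positive and negative Dehn twists on $\alpha_1$ and $\beta_1$ only, and then feed this factorization into Algorithm 1.

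Inspecting the resulting surgery link, every component is a $z$-translate of either $\Psi(\alpha_1)$ or $\Psi(\beta_1)$, which by Lemma \ref{Lemma:Embed} are Legendrian unknots with $tb = -1$. Since $\partial_z$ is the Reeb vector field of $\lambda_{std}$, translation in $z$ preserves both Legendrian type and $tb$, so every component of the surgery link is a Legendrian unknot with $tb = -1$; in particular no $tb = -2$ component appears. This takes care of the first assertion.

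For the two-component sublink claim I would compute linking numbers directly. Two $z$-translates of the same unknot $\Psi(\zeta)$, $\zeta \in \{\alpha_1, \beta_1\}$, are parallel Reeb pushoffs, so their linking number equals $tb(\Psi(\zeta)) = -1$ and the pair is a negative Hopf link. A pair consisting of a $z$-translate of $\Psi(\alpha_1)$ and one of $\Psi(\beta_1)$ is smoothly isotopic to $\Psi(\alpha_1) \cup \Psi(\beta_1)$ drawn in Figure \ref{Fig:EmbedSkeleton} with $g = 1$ (the isotopy being independent $z$-translations of the two components), which is manifestly a Hopf link. Hence every two-component sublink is a Hopf link, which falls under the ``Hopf link or unlink'' alternative stated in the corollary.

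The principal technical checks are the identification of the linking number of a Reeb pushoff with $tb$ and the stability of the Hopf-link type in Figure \ref{Fig:EmbedSkeleton} under relative $z$-translation of the two components. I expect both of these to be essentially immediate from the standard contact-topology dictionary and a glance at the front projection, so the main content of the corollary is really the observation that omitting $\gamma_j$'s from the factorization also omits the $tb = -2$ components and the $(-4,2)$-torus sublinks allowed by Theorem \ref{Thm:Alg}.
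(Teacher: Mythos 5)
Your proposal is correct and takes essentially the same route as the paper, which disposes of the corollary in one line by noting that the $\gamma_j$ (and hence the $tb=-2$ components and $(-4,2)$-torus sublinks) simply do not occur when $g=1$; you have just filled in the linking-number bookkeeping that the paper leaves implicit.
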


This is immediate from the proof of Theorem \ref{Thm:Alg} and the fact that the $\gamma$ Lickorish curves are non-existent in this case.

\begin{rmk}\label{Rmk:ChernClass}
Due to Gompf's formula relating Chern classes of contact structures and rotation numbers of Legendrian knots \cite[Proposition 2.3]{Gompf} (see also \cite[\S 11.3]{OzbSt:SteinSurgery}), Corollary \ref{Cor:PuncturedTorus} extends \cite[Lemma 6.1]{EtOzb:Support}, which states that the Chern class of any contact structure supported by an open book whose page is a punctured torus must be zero.
\end{rmk}


\section{Open books from surgery diagrams} \label{algorithm}

In this section we show how to embed a Legendrian link into the page of an open book decomposition supporting $\Sthree$ as in Theorem \ref{Thm:AlgLink}.  Let $L$ be a Legendrian link in $\Sthree$ with front diagram $D(L)$.  In Section \ref{Sec:Alg} we present the part of the algorithm which builds the page of an open book decomposition from $D(L)$.  In Section \ref{Sec:Justification} we prove that the surface obtained in Section \ref{Sec:Alg} is indeed the page of an open book supporting $\Sthree$.  A Dehn twist factorization of the monodromy of this open book is described in Section \ref{Sec:Mono}.  In Section \ref{Sec:TakingControl} we show how the Euler characteristic and number of boundary components of the surface constructed can be controlled.

\subsection{Algorithm 2}\label{Sec:Alg}

For simplicity, we assume that each cusp of $D(L)$ is tangent to a line of the form $\mathbb{R}\times\lbrace z_{0} \rbrace$ in the $(x,z)$-plane.  Any Legendrian front diagram may be slightly perturbed so that this condition holds.\\

\noindent\textbf{Step 1 (Crossing completion).} For every crossing of $D(L)$ we adjoin to $L$ a Legendrian arc of the form $\gamma(t) = (x_{0},y_{0}+t,z_{0})$ connecting the two points on $L$ associated to the crossing.  The completed Legendrian graph, $L'$ can be pictorially represented by completing every crossing to an ``X'' as depicted in Figure \ref{Fig:CrossingCompletion}.  We denote the completed diagram by $D(L')$.\\

\begin{figure}[h]
	\begin{overpic}[scale=.7]{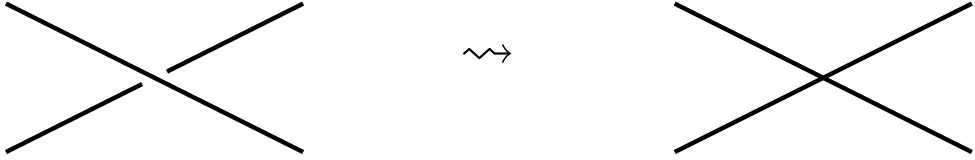}
	   \put(47,9){\huge{$\rsa$}}
    \end{overpic}
	\caption{Step 1 of the algorithm.}
    \label{Fig:CrossingCompletion}
\end{figure}

\noindent\textbf{Step 2 (Reduction to the connected case).} If $D(L')$ is connected, relabel $L'$ as $L''$ and continue to Step 3.  Otherwise, label the connected components of $D(L)$ as $D(L')_{j};\; j=1,\dots,n$. Choose $n-1$ Legendrian arcs in $\Rthree$, $\gamma_{1},\dots,\gamma_{n-1}$ such that the front diagram $D(\cup_{j}\gamma_{j})$ has embedded interior (i.e. has no crossings), and such that
\be
\item the interior is disjoint from $D(L')$, and
\item $\partial D(\gamma_{j})$ consists of a cusp on $D(L')_{j}$ and another on $D(L')_{j-1}$.
\ee
We also require that near each point of $\partial D(\cup_{j}\gamma_{j})$, the diagram is tangent to a line of the form $\gamma(t) = (x_{0},y_{0}+t,z_{0})$.

Let $L''=L'\cup(\cup_{j} \gamma_{j})$ with front diagram $D(L'')$.  An example is depicted in Figure \ref{Fig:LDoublePrime}.

\begin{figure}[h]
	\begin{overpic}[scale=.7]{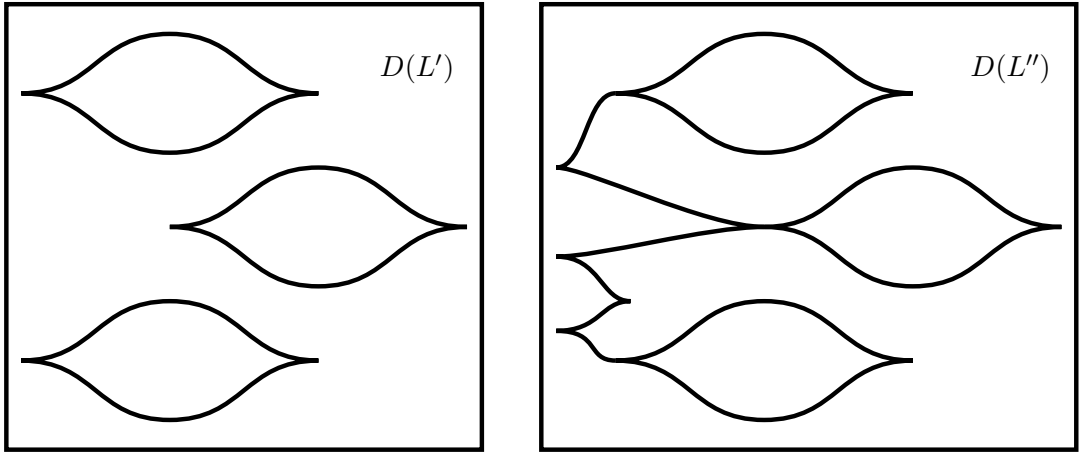}
	   \put(35,35){$D(L')$}
       \put(90,35){$D(L'')$}
    \end{overpic}
	\caption{Adjoining Legendrian arcs to $L'$, ensuring connectedness of $L''$.}
    \label{Fig:LDoublePrime}
\end{figure}

\noindent\textbf{Step 3 (Partitioning unknots)}  Now every embedded disk $\disk$ in $\mathbb{R}^{2}\setminus D(L'')$ with boundary on $D(L'')$ lifts to a disk in $\Rthree$ bounding a piecewise smooth Legendrian unknot in $L''$.  If, after smoothing, $\partial\disk$ has Thurston-Bennequin invariant less than $-1$, partition $\disk$ into disks with $tb=-1$ by adjoining non-destabilizeable Legendrian arcs to $L'$, both of whose boundary points live on the cusps of $\partial \disk$.  Do this in such a way that the union of the new arcs has embedded interior.  Note that there is in general no unique way to choose the cusps along which the arcs will be connected.  An example is depicted in Figure \ref{Fig:DiskPartition}.  Denote the Legendrian graph obtained by $\bar{L}$.  We will later see that $\bar{L}$ is the 1-skeleton of a contact cell decomposition of $\Sthree$.\\

\begin{figure}[h]
	\begin{overpic}[scale=.7]{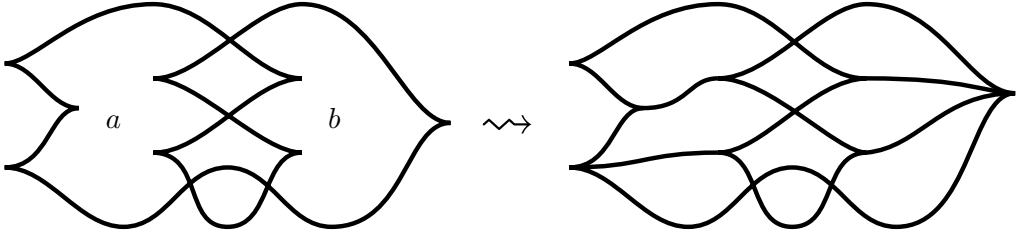}
	   \put(47,9){\huge{$\rsa$}}
        \put(10,10){$a$}
        \put(32,10){$b$}
    \end{overpic}
	\caption{A typical execution of Step 3.  Here only the disks labeled $a$ and $b$ need to be cut into smaller pieces.}
    \label{Fig:DiskPartition}
\end{figure}

\noindent\textbf{Step 4 (The ribbon near cusps).}  Now develop the ribbon of $\bar{L}$ near its cusps in the front projection.  We show how this is done in Figure \ref{Fig:CuspRibbon}.  In a neighborhood of a given cusp, $\bar{L}$ may be described by a Legendrian arc $\ell$ which we may parameterized so that $\frac{\partial z}{\partial t} \geq 0$.  The boundary of the ribbon near $\ell$ will consist of one positive and one negative transverse push-off of $\ell$.\\

\begin{figure}[h]
	\begin{overpic}[scale=.7]{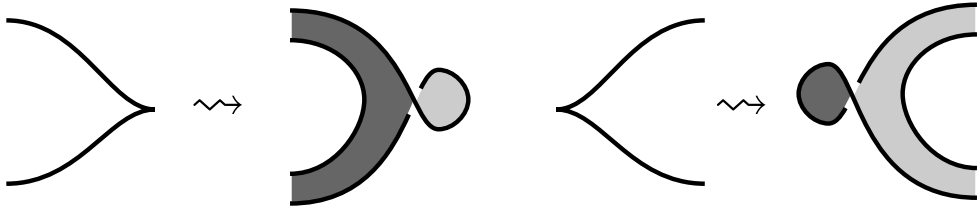}
	   \put(19,9){\huge{$\rsa$}}
       \put(73,9){\huge{$\rsa$}}
    \end{overpic}
	\caption{A picture of the ribbon of $\bar{L}$ near the cusps.  The orientation of the surface inherited from $\xi$ and the blackboard orientation agree on the lightly shaded regions and disagree on the heavily shaded regions.}
    \label{Fig:CuspRibbon}
\end{figure}

\noindent\textbf{Step 5 (The ribbon near singularities).}  Now we draw the ribbon of $\bar{L}$ in the front projection in neighborhoods of the singularities of $D(\bar{L})$.  Let $p\in D(\bar{L})$ be a singularity of $D(\bar{L})$.  In an arbitrarily small neighborhood of $p\in\mathbb{R}^{2}$ there may be an arbitrarily large number of Legendrian arcs in $D(\bar{L})$ emanating from $p$.  Suppose that there are $m$ arcs $\alpha_{1},\dots,\alpha_{m}$ in this neighborhood which lie to the left of $p$ and $n$ arcs $\beta_{1},\dots,\beta_{n}$ which lie to the right of $p$.  Suppose all of the $\alpha_{j}$ and $\beta_{j}$ are oriented so that they point out of $p$ and are indexed so that $\alpha_{j+1}$ lies above $\alpha_{j}$ and $\beta_{j+1}$ lies above $\beta_{j}$ as shown in Figure \ref{Fig:SingRibbon}. Then the boundary of the ribbon of $\bar{L}$ near $p$ is
\be
\item $T_{+}(\alpha_{j+1}\cup(-\alpha_{j}))$ for $j=1,\dots,m-1$,
\item $T_{+}(\beta_{j+1}\cup(-\beta_{j}))$ for $j=1,\dots,n-1$,
\item $T_{+}(\beta_{1}\cup(-\beta_{n}))$ if $m=0$; $T_{+}(\beta_{1}\cup(-\alpha_{m}))$ if $m\neq 0$, and
\item $T_{+}(\alpha_{1}\cup(-\alpha_{n})$ if $n=0$;  $T_{+}(\alpha_{1}\cup(-\beta_{n})$ if $n\neq 0$.\\
\ee

\begin{figure}[h]
	\begin{overpic}[scale=.7]{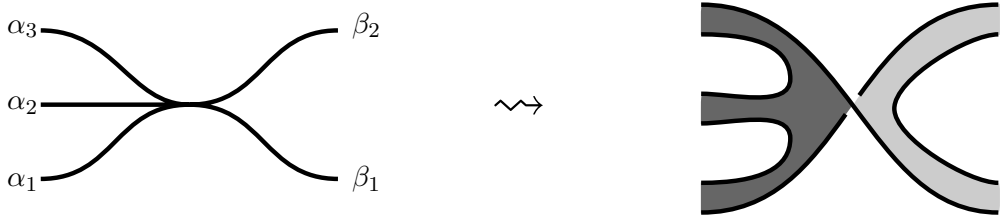}
	   \put(47,10){\huge{$\rsa$}}
        \put(-3.5,19){$\alpha_{3}$}
        \put(-3.5,11){$\alpha_{2}$}
        \put(-3.5,3){$\alpha_{1}$}
        \put(32.5,19){$\beta_{2}$}
        \put(32.5,3){$\beta_{1}$}
    \end{overpic}
	\caption{A picture of the ribbon of $\bar{L}$ near a typical singularity.  The arcs $\alpha_{j}$ and $\beta_{j}$ are labeled as in Step 5.}
    \label{Fig:SingRibbon}
\end{figure}

\noindent\textbf{Step 6 (The ribbon along the remaining non-destabilizeable arcs).}  The next step consists of completing the ribbon for $\bar{L}$ by adjoining strips along those subarcs which are free of cusps and crossings.  There is a unique (up to isotopy) way to do this due to the required transversality of the ribbon with the vector field $\partial_{z}$.  Suppose that $\ell$ is such an arc from $p\in\mathbb{R}^{2}$ to $q\in\mathbb{R}^{2}$, oriented left to right.  The previous steps in the construction have forced the blackboard orientation of the ribbon to disagree with the $\partial_{z}$-orientation at $p$ and to agree with the $\partial_{z}$ orientation at $q$.  Therefore we can complete the ribbon along $\ell$ as in Figure \ref{Fig:ArcRibbon}.

\begin{figure}[h]
	\begin{overpic}[scale=.7]{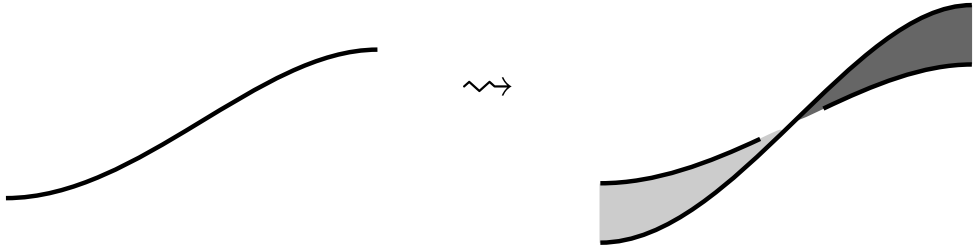}
	    \put(47,15){\huge{$\rsa$}}
    \end{overpic}
	\caption{A picture of the ribbon of $\bar{L}$ along a non-destabilizeable arc as described in Step 6.}
    \label{Fig:ArcRibbon}
\end{figure}

We write $\Sigma$ for the surface constructed after the completion of Step 6.  In the next section we will show that $\Sigma$ is the page of an open book supporting $\Sthree$.  For an easy example, see Figure \ref{Fig:Trefoil}.  In this example the second and third steps of the algorithm are trivial.

\begin{rmk}
The front projection of the surface $\Sigma$ can sometimes be simplified by Reidemeister-type moves.  See Figure \ref{Fig:Reidemeister} for an example.
\end{rmk}

\begin{figure}[h]
    \begin{overpic}[scale=.7]{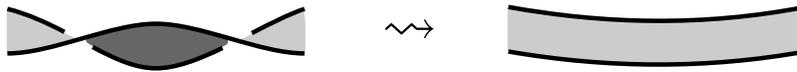}
        \put(47,3.5){\huge{$\rsa$}}
    \end{overpic}
    \caption{Applying a Reidemeister move to the surface $\Sigma$.}
    \label{Fig:Reidemeister}
\end{figure}

\subsection{Justification of Algorithm 2}\label{Sec:Justification}

The purpose of this section is to prove Theorem \ref{Thm:AlgLink} using the algorithm described in the previous section.  Before beginning the proof we introduce some vocabulary which will simplify the discussion.

\begin{defn}
Let $D(\bar{L})$ be a front diagram for a Legendrian graph $\bar{L}\subset\Rthree$.  An embedded disk $\disk\subset\mathbb{R}$ is called \emph{small} if Int($\disk)\cap D(\bar{L})=\emptyset$ and $\partial\disk\subset D(\bar{L})$.
\end{defn}

Note that for every small disk in $\mathbb{R}^{2}$, there is a unique (up to boundary relative isotopy) lift to an embedded disk in $\Sthree$ with boundary in $\bar{L}$.  Then there is no ambiguity in calling such lifted disks in $\Sthree$ \emph{small}.  Small disks will be our candidates for the 2-cells of a contact cell decomposition of the 3-sphere which contains $L$ in its 1-skeleton.

Although small disks will generally have piecewise smooth boundary they can be approximated by smooth disks.  Let $\Sigma$ be a ribbon for the Legendrian graph $\bar{L}$.  If $\disk$ is a small disk in $\Rthree$, then $\partial\disk\subset\Sigma$ can be isotoped to a smooth simple, closed curve in $\Sigma$ by the Legendrian realization principle.

\begin{prop}\label{Prop:TBCompute}
Let $\disk\subset\Rthree$ be a small disk for $D(\bar{L})$.  Then after smoothing $\disk$ to a smooth disk $\disk'$,
\begin{equation*}
tb(\partial \disk')=\#(\text{Left and right cusps of $D(\partial\disk)$}).
\end{equation*}
\end{prop}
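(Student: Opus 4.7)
The plan is to compute $tb(\partial\disk')$ directly from a front diagram of $\partial\disk'$ by combining the small-disk hypothesis with a local analysis of the smoothing inside the ribbon of $\bar{L}$. First I would apply the Legendrian realization principle of Section \ref{Sec:LegRel} in an arbitrarily small neighborhood of the ribbon to perturb the piecewise-smooth Legendrian curve $\partial\disk$ to a smooth Legendrian unknot $\partial\disk'$ bounding the smooth embedded disk $\disk'$. Once $\partial\disk'$ is a genuine Legendrian knot, the standard front-projection formula
\[
tb(\partial\disk') \;=\; w(D(\partial\disk')) \;-\; \tfrac{1}{2}\,\#(\text{cusps of } D(\partial\disk'))
\]
applies, so the task reduces to identifying the writhe and the cusps of $D(\partial\disk')$ in terms of data already visible in $D(\partial\disk)$.

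Next I would show that $w(D(\partial\disk')) = 0$. The small-disk hypothesis $\text{Int}(\disk)\cap D(\bar{L})=\emptyset$ immediately rules out self-crossings of $D(\partial\disk')$ in the interior of $\disk$. A self-crossing could in principle appear near a singular vertex of $D(\bar{L})$ on $\partial\disk$, but the Legendrian realization happens inside the ribbon, which by Step 5 of Algorithm 2 (Figure \ref{Fig:SingRibbon}) is a union of transverse push-offs of piecewise-Legendrian arcs in a small neighborhood of the vertex. These push-offs embed into the $(x,z)$-plane and carry no new crossings, so no self-crossing of $D(\partial\disk')$ arises near vertices either.

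For the cusp count, I would argue that the cusps of $D(\partial\disk')$ are in bijection with the cusps of $D(\bar{L})$ lying on $\partial\disk$. Cusps on the smooth arcs of $\bar{L}$ (treated in Steps 4 and 6 and depicted in Figures \ref{Fig:CuspRibbon} and \ref{Fig:ArcRibbon}) are transported unchanged by the $C^{0}$-small smoothing. At each singular vertex on $\partial\disk$, the smoothed boundary follows one of the transverse push-offs described in Step 5, which is smooth and strictly monotone in $z$ in a neighborhood of the vertex, and therefore contributes no cusp. Combined with the vanishing of writhe, this identifies $tb(\partial\disk')$ with the count appearing in the statement, modulo the overall sign and factor of $\tfrac{1}{2}$ dictated by the front-diagram convention $tb = w - \tfrac{1}{2}\#(\text{cusps})$. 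The main obstacle is precisely this local verification at singular vertices: one must check case by case that neither the crossing-completion arcs introduced in Step 1 nor the partitioning arcs introduced in Step 3 force the smoothed front to develop spurious cusps or self-intersections, which hinges on the orientation data recorded by the shading convention of Section \ref{Sec:Conv}.
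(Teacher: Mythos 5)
Your overall strategy is the same as the paper's: smooth $\partial\disk$ to a genuine Legendrian unknot, observe that the front of the smoothing has no crossings, and apply $tb = w - \tfrac{1}{2}\#(\text{cusps})$ (this is exactly Equation \ref{Eq:Tb}, which is all the paper's proof invokes). The gap is in your cusp accounting at the singular vertices. You assert that no vertex of $D(\bar{L})$ on $\partial\disk$ contributes a cusp to $D(\partial\disk')$, but this is false, and it is false in a way that matters: a front-projected Legendrian curve can only reverse its $x$-direction at a cusp, so every corner of the piecewise-smooth curve $D(\partial\disk)$ at which the two incident arcs leave the vertex on the \emph{same} side in $x$ (a local $x$-extremum) is forced to become a standard left or right cusp of the smoothed front. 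This happens at every completed crossing for the two small regions lying to the left and to the right of the ``X'' (and at endpoints of the partitioning arcs of Step 3). These corner-cusps are precisely what the right-hand side of the proposition is counting beyond the smooth cusps of $D(L)$, and they are indispensable later: the lemma in Section \ref{Sec:TakingControl} uses the fact that each crossing contributes $-\frac{1}{2}$ to the $tb$ of exactly two small cycles. Concretely, in the trefoil of Figure \ref{Fig:Trefoil} the innermost elementary disks have no cusps of $D(L)$ on their boundary at all; your count would give them $tb=0$, contradicting the Bennequin inequality for an unknot bounding a disk. The paper's (very short) proof is exactly the complementary statement to yours: the corners that are \emph{not} local $x$-extrema (``not standard left or right cusps'') are the ones eliminated by smoothing, while the $x$-extremal ones persist.

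Two smaller issues. First, you conflate the Legendrian smoothing of $\partial\disk$ with the transverse push-offs of Step 5: those push-offs form the boundary of the ribbon, not the smoothed disk boundary, and ``monotone in $z$'' does not preclude a cusp of a Legendrian front in any case. Second, ending with ``modulo the overall sign and factor of $\tfrac{1}{2}$'' leaves the stated equality unproved; once the corner-cusps are counted correctly the conclusion is $tb(\partial\disk') = -\tfrac{1}{2}\,\#(\text{left and right cusps of } D(\partial\disk))$, which is the form actually used in the rest of the paper, so the normalization is not something to wave away.
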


\begin{proof}  Any cusps which are not standard left or right cusps will be eliminated after smoothing as described in the paragraph preceding Proposition \ref{Prop:TBCompute}.  Now apply the usual formula (c.f. \cite[Section 2.6.2]{Etnyre:KnotNotes}) used to compute the Thurston-Bennequin number of a smooth Legendrian knot:
\begin{equation} \label{Eq:Tb}
tb(K)= \#(\text{Positive crossings of $D(K)$})-\#(\text{Negative crossings of $D(K)$})-\frac{1}{2}\cdot\#(\text{Cusps of $D(K)$}).
\end{equation}
\end{proof}

\begin{defn}
For simplicity, if $\disk$ is a small disk in $\Rthree$ the \emph{Thurston-Bennequin number of $\disk$} will refer to the usual Thurston-Bennequin number of the boundary of a smooth disk with Legendrian boundary approximating $\disk$.  A small disk $\disk$ will be called \emph{elementary} if $tb(\disk)=-1$  The boundary of an elementary disk will be referred to as an \emph{elementary cycle}.
\end{defn}

\begin{prop}
In the notation of Section \ref{Sec:Alg}, the Legendrian graph $\bar{L}$ is the 1-skeleton of a contact cell decomposition of $\Sthree$ whose ribbon is $\Sigma$.  It follows that $\Sigma$ is the page of an open book decomposition of $\Sthree$ which contains the Legendrian link $L$.
\end{prop}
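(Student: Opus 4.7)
The plan is to verify the three hypotheses of Theorem \ref{Thm:GRibbon}: that $\bar{L}$ is the Legendrian 1-skeleton of a contact cell decomposition of $\Sthree$; that $\Sigma$ is a ribbon of $\bar{L}$; and that every 2-cell $\disk$ satisfies $tb(\partial\disk,\disk) = -1$ and meets $\partial\Sigma$ in exactly two points. Granted these, Theorem \ref{Thm:GRibbon} immediately produces an open book decomposition of $\Sthree$ supporting $\xi_{std}$ with page $\Sigma$, and since $L\subset\bar{L}\subset\Sigma$ by construction, $\Sigma$ contains $L$ in a single page.

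First I would identify the 2-cells of the decomposition with the small disks of $D(\bar{L})$ and verify the axioms of Definition \ref{Def:ContactCell}(1). Step 1 converts every crossing of $D(L)$ into a 4-valent vertex, Step 2 makes the diagram connected, and together these guarantee that every bounded complementary region of $\mathbb{R}^{2}\setminus D(L'')$ is a small disk. These disks, together with the disk coming from the region containing $\infty\in\Sthree$, lift uniquely to the 2-skeleton; the 3-cells are the complementary open 3-balls in $\Sthree$, which are automatically tight as open subsets of $\Sthree$; the 1-skeleton is Legendrian by construction; and each 2-cell can be made convex by applying the Legendrian realization principle to $\partial\disk$ within $\Sigma$. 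Step 3 subdivides every non-elementary small disk into elementary ones, so by Proposition \ref{Prop:TBCompute} every 2-cell of the final decomposition satisfies $tb(\partial\disk,\disk)=-1$.

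Second, I would verify that $\Sigma$ is a ribbon of $\bar{L}$ in the sense of Definition \ref{Def:ContactCell}(2). Containment $\bar{L}\subset\Sigma$ is immediate from Steps 4--6, each of which adjoins a local piece of surface along $\bar{L}$. The existence of a contact form with Reeb field transverse to $\Sigma$, and of a retracting vector field $X$ directing the characteristic foliation and pointing outward along $\partial\Sigma$, follows because the prescribed local pieces (Figures \ref{Fig:CuspRibbon}, \ref{Fig:SingRibbon}, \ref{Fig:ArcRibbon}) are precisely the standard Reeb-transverse ribbon models used in the outline of Giroux's construction in Section \ref{Sec:GirBook}; these local pieces glue coherently along edges and vertices of $\bar{L}$ into a globally Reeb-transverse surface whose characteristic foliation retracts onto $\bar{L}$ and exits through the positive and negative transverse push-offs that form $\partial\Sigma$.

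The step I expect to be the main obstacle is verifying that each elementary 2-cell $\disk$ meets $\partial\Sigma$ in exactly two points, which is the key numerical hypothesis of Theorem \ref{Thm:GRibbon}. By Step 3 and Proposition \ref{Prop:TBCompute}, after smoothing $\partial\disk$ inside $\Sigma$ via the Legendrian realization principle, the diagram of $\partial\disk$ carries exactly two cusps---one left and one right---since the only way for $\#(\text{cusps})/2$ to contribute $tb=-1$ to Equation \ref{Eq:Tb} on a small disk is with cusp count two and no self-crossings. The local model at each cusp shown in Step 4 (Figure \ref{Fig:CuspRibbon}) shows $\partial\Sigma$ crossing $\partial\disk$ transversely exactly once in a neighborhood of that cusp, while along the cusp-free arcs of $\partial\disk$ the smoothed boundary lies strictly in the interior of $\Sigma$ by Step 6 (Figure \ref{Fig:ArcRibbon}). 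Summing local intersection counts yields exactly two, so Theorem \ref{Thm:GRibbon} applies and the proof is complete.
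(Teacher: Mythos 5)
Your overall strategy is exactly the paper's: exhibit $\bar{L}$ as the $1$-skeleton of a contact cell decomposition whose $2$-cells are the elementary disks, check that $\Sigma$ is its ribbon via the local models of Steps 4--6, and then verify the two hypotheses of Theorem \ref{Thm:GRibbon} ($tb(\partial\disk)=-1$ and the count of two intersections with $\partial\Sigma$). Your local cusp-count for the intersection number is a legitimate variant of the paper's argument, which instead reduces each elementary cycle to the model $tb=-1$ unknot of Figure \ref{Fig:UnknotMovie} with extra bands plumbed on away from the cusps and invokes invariance of the geometric intersection number under such plumbing.

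There is, however, one concrete misstep: you include ``the disk coming from the region containing $\infty\in\Sthree$'' among the $2$-cells. The paper deliberately does not. The $2$-cells are only the (bounded) elementary disks; their union with $\bar{L}$ is a contractible piecewise-smooth disk, whose complement in $S^{3}$ is already a single tight $3$-ball, so no cell at infinity is needed to close up the decomposition. Including one is not harmless: the hypothesis of Theorem \ref{Thm:GRibbon} must hold for \emph{every} $2$-cell, and the outer cycle of $D(\bar{L})$ is not subdivided by Step 3, so its Thurston--Bennequin number is generally less than $-1$ (e.g.\ for a $tb=-3$ unknot the outer cycle is that unknot itself), and it will not meet $\partial\Sigma$ exactly twice. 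As written your cell decomposition therefore fails the hypotheses you are trying to verify; the fix is simply to drop that cell. Two smaller points: in the final paragraph you speak of $\partial\Sigma$ crossing $\partial\disk$, when the relevant count is of points of $\partial\Sigma$ in the \emph{interior} of the $2$-cell $\disk$ (two $1$-manifolds in a $3$-manifold generically do not meet); and your claim that nothing happens ``along the cusp-free arcs'' should also rule out contributions from the bands of $\Sigma$ attached at vertices of $\bar{L}$ lying on $\partial\disk$ --- this is precisely what the paper's transversality-to-$\partial_{z}$/plumbing remark is for, and your argument should say a word about it.
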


\begin{proof}
We will follow the algorithm step by step, in the end ensuring that the hypothesis of Theorem \ref{Thm:GRibbon} holds for $\bar{L}$ and $\Sigma$.

As we will also only be considering Legendrian arc segments we adopt the following conventions for local orientations.  For arcs which contain no cusps or crossings we always assume they are oriented ``from left to right'' so that $\frac{dx}{ds} > 0$.  Any arc which contains a single cusp is assumed to be oriented in such a way that $\frac{dz}{ds} > 0$.\\

\noindent\textbf{Steps 1-3.}
Steps 1 through 3 consist of connecting arcs to $L$.  Each arc is drawn in the front projection, and we must make sure that the resultant diagram $D(\bar{L})$ lifts to a Legendrian graph in $\Rthree$ which is homeomorphic to the graph $D(\bar{L})$.  In the case of Step 1 this is trivial as the added arcs are described by explicit parametrization.

For Step 2 this is established by requiring that each cusp of $L$ be tangent to a line of the form $\lbrace z= z_{0} \rbrace\subset \mathbb{R}^{2}$ for some $z_{0}\in\mathbb{R}$.  If an arc added at Step 2 also satisfies this tangency condition and is otherwise embedded in $\mathbb{R}^{2}-D(L)$, then it will have a lift to a Legendrian arc whose interior is disjoint from $L$, and whose endpoints lie on $L$ at points of the form $(x_{0},0,z_{0})$.

An endpoint of an arc $\ell$ added in Step 3 will end on a cusp or vertex of the graph $D(L'')$.  A cusp endpoint or an endpoint living on a vertex created in Step 2 may be justified as in the above paragraph.  When an endpoint lands on a vertex of $D(L'')$ created in Step 1, then again we require that $D(\ell)$ is tangent to a line of the form $\lbrace z= z_{0} \rbrace\subset \mathbb{R}^{2}$ near the endpoint.  If $\gamma$ is the arc of the form $t\mapsto(x_{0},t,z_{0})$ then $\ell$ will touch $\gamma$ at the single point $(x_{0},0,z_{0})$.\\

\noindent\textbf{Step 4.} The ribbon of $\bar{L}$ near a cusp can be described in the front projection by Figure \ref{Fig:CuspRibbon}.  Note that, away from the singular points in the diagram, each piece of the ribbon inherits two local orientations: one given by the Reeb vector field $\partial_{z}$, and the other given by ``the blackboard'' $\partial_{y}$.  We have indicated where these local orientations agree and disagree by shading the ribbon:  It is heavily shaded where these orientations disagree and lightly shaded where there orientations agree.

Near a cusp, the boundary of the ribbon consists of one positive and one negative transverse push-off.  For a right pointing cusp, note that by our orientation conventions $\frac{dy}{ds} <0$ and $\frac{dz}{ds} \geq 0$ with equality only at the point for which $y\circ\gamma = 0$.  Thus we can arrange that the transverse push-offs and surface are as in the figure.  The argument for a left pointing cusp is exactly the same except that in this case we have $\frac{dy}{ds} > 0$ and so this piece of the ribbon comes with a lighter shading.\\

\noindent\textbf{Step 5.}  Consider the $(x,y)$-projection of $\bar{L}$ near a singularity.  From this point of view, it is clear that if $\partial_{z}$ is to be transverse to the ribbon of $\bar{L}$ then the boundary of the ribbon must consist of the push-offs described in Step 5 of the algorithm.\\

\noindent\textbf{Step 6.}  Note that we can require the ribbon of $\bar{L}$ to be everywhere transverse to $\partial_{z}$.  Similarly, any surface which retracts onto $\bar{L}$ and satisfied this transversality condition on its interior and has transverse boundary is a ribbon of $\bar{L}$.  The surface shown in Figure \ref{Fig:ArcRibbon} satisfies these conditions.\\

\noindent\textbf{Applying Theorem \ref{Thm:GRibbon}.} Now we prove that $\Sigma$ of $\bar{L}$ the page of some open book decomposition for $\Sthree$, having already established that it is a ribbon of $\bar{L}$.  In order to verify this fact it suffices to check that $\Sigma$ fits the hypotheses of Theorem \ref{Thm:GRibbon}.

The Legendrian graph $\bar{L}$ is the 1-skeleton of cell decomposition of the 3-sphere whose 2-cells are the elementary disks.  Each 2-cell is (possibly after a $\mathcal{C}^{\infty}$-small perturbation) convex with Thurston-Bennequin number $-1$.  The union of $\bar{L}$ with all of the 2-cells is simply a piecewise smooth disk, whose complement is homeomorphic to a 3-ball which is the only 3-cell in this cell decomposition.  As $\Sthree$ is tight, the restriction of the contact structure to the 3-cell is also tight.

Now we must ensure that all of the interiors of the elementary disks are disjoint and that the interior of each disk intersects $\partial\Sigma$ exactly twice.  The fact that all of the interiors of the elementary disks are disjoint follows from the fact that the union of their interiors are disjointly embedded into the front projection diagram.  To guarantee that each elementary disk $\disk$ intersects the boundary of the ribbon exactly twice, first note that by our construction and the fact that $tb(\partial\disk)=-1$, $\Sigma$ is positive (is lightly shaded) near the left cusp and negative (is heavily shaded) near the right cusp.  Moreover, by our construction, the ribbon along each elementary cycle $\partial\disk$ will be the same as in the case of the Legendrian unknot with $tb=-1$ in $\Sthree$ with plumbings of some extraneous bands away from the left and right pointing cusps of $\partial\disk$ in such a way that the bands are always transverse to the vector field $\partial_{z}$.  As the transversality and geometric intersection numbers of $\disk$ with the boundary of the ribbon are invariant under plumbing bands in this way we see that the desired properties hold.

Finally, we have the ribbon of a contact cell decomposition of $\Sthree$ which intersects every 2-cell exactly twice.  Therefore, by Theorem \ref{Thm:GRibbon} there is some diffeomorphism $\Phi$ of $\Sigma$ so that $(\Sigma,\Phi)$ is equivalent to $\Sthree$ via Theorem \ref{Thm:GirCor}, and contains $L$ in a single page.
\end{proof}

\subsection{A Dehn twist presentation of the monodromy}\label{Sec:Mono}

In this section we find a Dehn twist factorization of the monodromy $\Phi$ of the open book $\AOB$ supporting $\Sthree$, where $\Sigma$ is the surface constructed in Algorithm 2.

\begin{defn}  Let $\Sigma$ be a compact oriented surface with $\partial\Sigma\ne\emptyset$.  An \emph{arc basis} of $\Sigma$ is a collection $\lbrace \gamma_{j} \rbrace_{1}^{n}$ of disjoint embedded arcs in $\Sigma$ such that
\be
\item $\partial \gamma_{j}\subset\partial\Sigma$ for all $j=1,\dots,n$ and
\item $\Sigma\setminus\cup\gamma_{j}$ is homeomorphic to a disk.
\ee
\end{defn}

Note that if $\lbrace \gamma_{j}\rbrace$ is an arc basis of $\Sigma$, then the isotopy class of the diffeomorphism $\Phi$ is determined by the isotopy classes of the images $\Phi(\gamma_{j})$.  To state the monodromy factorization theorem we need one more vocabulary item.

\begin{defn}
In the notation of Algorithm 2, let $\disk$ and $\disk'$ be distinct elementary disks for the Legendrian graph $\bar{L}$ with boundaries $C$ and $C'$, respectively.  If $\emptyset\ne C\cap C'\subset \bar{L}$ we write $\disk>\disk'$ if $\partial_{z}$ points into $\disk$ along $C\cap C'$.
\end{defn}

Note that in the above definition, it is impossible that both $\disk >\disk'$ and $\disk'>\disk$ as otherwise $C$ or $C'$ would be destabilizeable.  This would violate the definition of an elementary disk.

\begin{thm}\label{Thm:Mono}
The monodromy $\Phi$ of the open book of $\Sthree$ with page $\Sigma$ constructed in Algorithm 2 can be written as
\begin{equation*}
\Phi = D^{+}_{C_{1}}\circ \dots \circ D^{+}_{C_{n}}.
\end{equation*}
Here $\lbrace \disk_{j} \rbrace_{1}^{n}$ is the collection of elementary disks with $C_{j}=\partial\disk_{j}$, indexed so that $\disk_{j}>\disk_{i}$ implies $j>i$.
\end{thm}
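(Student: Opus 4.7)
My plan is to compute $\Phi$ by following the Reeb flow through the complement of a thickened ribbon and reading off the local contribution of each elementary disk, with the partial order $>$ encoding the composition order. Choose a contact form $\alpha$ for $\xi_{std}$ so that on a ribbon neighborhood $N(\Sigma) \cong \Sigma \times [-\epsilon,\epsilon]$ of $\Sigma$ one has $\alpha = dz + \alpha|_{T\Sigma}$ and $R_\alpha = \partial_z$. Let $B = \Sthree \setminus \mathrm{int}(N(\Sigma))$. By the construction of Section~\ref{Sec:GirBook}, $\Phi$ is the first-return map of the (extended) Reeb flow on the page $\Sigma$; because the flow is trivial inside $N(\Sigma)$, all of $\Phi$ is produced by transporting an arc from the top copy $\Sigma \times \{+\epsilon\}$ around through $B$ to the bottom copy $\Sigma \times \{-\epsilon\}$, across a region which the elementary disks $\disk_1, \ldots, \disk_n$ chop into tight 3-cells.

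The local contribution of one disk comes from the unknot model of Figure~\ref{Fig:UnknotMovie}. There the ribbon is an annulus, there is a single elementary disk, and the monodromy is one positive Dehn twist about the core of the annulus, which is isotopic within the ribbon to the boundary of that disk. In the general situation, each tight sub-cell of $B$ cut off by the $\disk_j$ should be contactomorphic to a piece of the standard 3-ball in this unknot picture, the identification being dictated by the $tb = -1$ condition on $\disk_j$ and the fact (built into the hypotheses of Theorem~\ref{Thm:GRibbon}) that each $\disk_j$ meets $\partial\Sigma$ in exactly two points. Applying the model cell by cell shows that pushing an arc by the Reeb flow past $\disk_j$ twists it by exactly $D^+_{C_j}$ and that separated disks contribute independently, so $\Phi$ is a product of the $D^+_{C_j}$.

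To order the product I examine $\partial_z$ along shared edges. If $\disk_j > \disk_i$, then by definition $\partial_z$ points into $\disk_j$ along $C_i \cap C_j \subset \bar{L}$, so $\disk_j$ extends from the edge into the $+z$ side of $B$ while $\disk_i$ extends into the $-z$ side. The Reeb flow running from the top page $\Sigma \times \{+\epsilon\}$ around through $B$ to the bottom page $\Sigma \times \{-\epsilon\}$ therefore visits the region near $\disk_j$ before the region near $\disk_i$. Since the first-applied factor of a composition sits on the right, this places $D^+_{C_j}$ to the right of $D^+_{C_i}$. With the indexing convention $\disk_j > \disk_i \Rightarrow j > i$, higher-index twists lie to the right, yielding $\Phi = D^+_{C_1} \circ \cdots \circ D^+_{C_n}$. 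Antisymmetry of $>$ follows because a cycle $\disk > \disk' > \disk$ would force an elementary cycle to be destabilizable, contradicting $tb(\partial\disk) = -1$, so a compatible total ordering does exist.

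The main obstacle is the local-to-global identification: showing rigorously that each sub-cell of $B$ is contactomorphic to the corresponding piece of the unknot model and that the local Dehn twist contributions genuinely compose as a product. A clean way to execute this, I expect, is to choose an arc basis of $\Sigma$ dual to the edges of $\bar{L}$ and to track the image of each dual arc under the Reeb flow, using the local model to recognize each twist as it happens and the partial-order argument above to fix the order in which the twists appear. The tightness hypothesis on the 3-cells and the $tb=-1$ condition on 2-cells are exactly what is needed to rule out any extra twisting beyond the $D^+_{C_j}$.
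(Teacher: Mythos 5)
Your overall strategy --- reduce to the unknot model of Figure \ref{Fig:UnknotMovie}, compute $\Phi$ on an arc basis, and use the partial order $>$ to fix the order of composition --- is the strategy of the paper's proof, and you have correctly identified the crux (the ``local-to-global identification'') as the part still to be done. The gap is that you never say which arc basis makes this work, and the one you gesture at (``dual to the edges of $\bar{L}$'') is not the right indexing: the arcs must be indexed by the elementary disks, not the edges. The paper sets $N(\Sigma)'=S^{3}\setminus N(\Sigma)$, $\disk_{j}'=\disk_{j}\cap N(\Sigma)'$, and takes $\gamma_{j}:=\partial\disk_{j}'\cap(\partial N(\Sigma))^{+}$, reading off $\Phi(\gamma_{j})$ as $\partial\disk_{j}'\cap(\partial N(\Sigma))^{-}$. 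With this choice both $\gamma_{j}$ and its image are carried by the single disk $\disk_{j}$, so the identity $\Phi(\gamma_{j})=D^{+}_{C_{j}}(\gamma_{j})$ is literally the unknot picture with extra bands plumbed on away from $\disk_{j}$; no cell-by-cell contactomorphism of the complementary ball and no ``separated disks contribute independently'' lemma is needed. That canonical choice of arc basis is the missing idea --- without it, ``track each dual arc under the Reeb flow'' does not obviously localize to one disk at a time, which is exactly the obstacle you flagged.

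Your ordering argument also needs repair. The claim that ``the Reeb flow visits the region near $\disk_{j}$ before the region near $\disk_{i}$'' is not well defined: an arc transported around the complement meets different disks along different portions of its length, and there is no global time at which the flow passes a given disk. The paper replaces this with a disjointness check on the arc basis: if $j>i$ then $C_{j}\cap\gamma_{i}=\emptyset$ and $C_{i}\cap\Phi(\gamma_{j})=\emptyset$, hence $D^{+}_{C_{j}}(\gamma_{i})=\gamma_{i}$ and $D^{+}_{C_{i}}\circ D^{+}_{C_{j}}(\gamma_{j})=D^{+}_{C_{j}}(\gamma_{j})$, so the product $D^{+}_{C_{1}}\circ\cdots\circ D^{+}_{C_{n}}$ agrees with $\Phi$ on every basis arc --- which suffices, since a boundary-relative mapping class is determined by its action on an arc basis. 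Your remark that $>$ admits no $2$-cycles (else an elementary cycle would be destabilizable) is correct and is the same point the paper makes to justify that a compatible total order exists.
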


\begin{proof}
Suppose that the elementary disks are indexed as in the statement of the theorem.  We will show that
\be
\item the $\disk_{j}$ naturally determine an arc basis $\lbrace \gamma_{j} \rbrace$ of $\Sigma$ and the images $\Phi(\gamma_{j})$,
\item $\Phi(\gamma_{j})=D^{+}_{C_{j}}(\gamma_{j})$, and
\item $D^{+}_{C_{1}}\circ\dots\circ D^{+}_{C_{n}} = D^{+}_{C_{j}}(\gamma_{j})$.
\ee
This suffices to prove the theorem.\\

\noindent\textbf{Step 1.}  Let $N(\Sigma)$ be an arbitrarily small tubular neighborhood of $\Sigma$.  The proof of Theorem \ref{Thm:GRibbon}(1) (see \cite{Etnyre:OBIntro}) indicates that
\be
\item $N(\Sigma)':=S^{3}\setminus N(\Sigma) $ is contactomorphic to $N(\Sigma)$ and
\item $N(\Sigma)'\setminus (N(\Sigma)'\cap(\cup \disk_{j}))$ is contactomorphic to a standard 3-ball $(B^{3},\xi_{std})$.
\ee
We can write $\Sthree$ as the open book $(M_{\AOB},\xi_{\AOB})$ where
\begin{equation*}
\begin{gathered}
N(\Sigma) = [0,1/2]\times\Sigma/((t,x)\sim(t',x)\; \forall \;x\in \partial\Sigma),\quad\text{and} \\
N(\Sigma)'= [1/2,1]\times\Sigma/((t,x)\sim(t',x)\; \forall \;x\in \partial\Sigma)
\end{gathered}
\end{equation*}
in the notation of the introduction.

The elementary disks determine an arc basis of $\Sigma$ in the following way:  Consider $\disk_{j}' = \disk_{j}\cap N(\Sigma)'$.  The boundary $\partial N(\Sigma)$ of $N(\Sigma)$ is a convex surface with
\begin{equation*}
\begin{gathered}
(\partial N(\Sigma))^{+} = \lbrace 1/2 \rbrace \times \Sigma /\sim, \quad (\partial N(\Sigma))^{-} = \lbrace 0 \rbrace \times \Sigma /\sim,\\
\text{and dividing set} \quad\partial \Sigma.
\end{gathered}
\end{equation*}
Then $\gamma_{j}:=(\partial \disk_{j}')\cap (N(\Sigma))^{+}$ is an arc basis of $(\partial N(\Sigma))^{+}$.  As $(\partial N(\Sigma))^{+}$ is isotopic in $N(\Sigma)$ to $\Sigma$, this gives an arc basis of $\Sigma$ which we again denote by $\lbrace \gamma_{j}\rbrace$.  Similarly, another arc basis of $\Sigma$ is determined by $(\partial\disk'_{j})\cap(\partial N(\Sigma))^{-}$ which gives $\Phi(\gamma_{j})$.\\

\noindent\textbf{Step 2.}  We now explicitly describe the arcs $\gamma_{j}$ and $\Phi(\gamma_{j})$ in terms of the front projection of $\Sigma$ produced by Algorithm 2.  Consider Figure \ref{Fig:UnknotMovie} which shows these curves in the case that $\bar{L}$ is the boundary of a single elementary disk $\disk_{1}$.  The curve $\gamma_{1}$ described in Step 1 of this proof appears on the lower left ($t=0$).  $\Phi(\gamma_{1})$ appears on the lower right ($t=1$).  Note that $\Phi(\gamma_{1})=D^{+}(\gamma_{1})$.

\begin{figure}[h]
    \begin{overpic}[scale=.7]{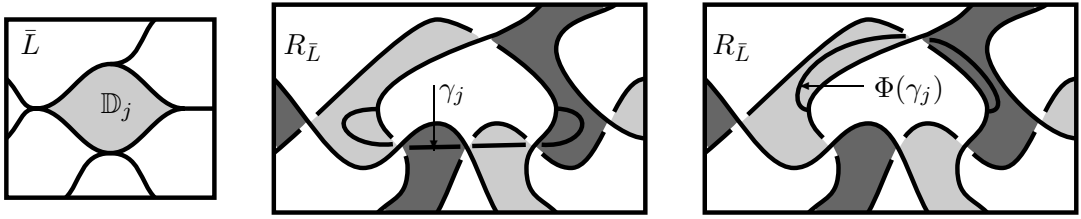}
        \put(1.5,15){$\bar{L}$}
        \put(9,9.5){$\disk_{j}$}
        \put(26,15){$R_{\bar{L}}$}
        \put(66,15){$R_{\bar{L}}$}
        \put(40.5,11){$\gamma_{j}$}
        \put(40,12){\vector(0,-1){6}}
        \put(81,11){$\Phi(\gamma_{j})$}
        \put(80,12){\vector(-1,0){6}}
    \end{overpic}
    \caption{The arcs $\gamma_{j}$ and $\Phi(\gamma_{j})$ associated to an elementary disk $\disk_{j}$.  In the picture we show a close-up of the 1-skeleton $\bar{L}$ of a contact cell decomposition created using Algorithm 2 near an elementary disk.  The ribbon of $\bar{L}$ is labeled $R_{\bar{L}}$.}
    \label{Fig:ArcBasis}
\end{figure}

An arbitrary elementary disk is the same, except that now $\bar{L}$ may have additional Legendrian arcs emanating from $C_{j}=\partial \disk_{j}$.  As these arcs do not intersect $\disk_{j}$ we again conclude that $\Phi(\gamma_{j})=D^{+}_{C_{j}}(\gamma_{j})$.  See Figure \ref{Fig:ArcBasis}.\\

\noindent\textbf{Step 3.}  To finish the proof it suffices to show that if $j>i$ then $D^{+}_{C_{j}}(\gamma_{i})=\gamma_{i}$ and $D^{+}_{C_{i}}\circ D^{+}_{C_{j}}(\gamma_{j}) = D^{+}_{C_{j}}(\gamma_{j})$.  This follows immediately from our choice of indexing.  If $\disk_{j}>\disk_{i}$, then $C_{j}\cap\gamma_{i}=\emptyset$ and $C_{i}\cap\Phi(\gamma_{j}) = \emptyset$.
\end{proof}

\subsection{Controlling the Euler characteristic and number of boundary components}\label{Sec:TakingControl}

In this section we show how the choices involved in carrying out Algorithm 2 can be made so as to establish Equation \ref{Eq:Ineq}.  We also show that the choices may be made so that the surface $\Sigma$ has connected boundary.  Again $L$ will denote a Legendrian link in $\Sthree$ with front projection $D(L)$ to which we apply Algorithm 2.

\begin{lemma}  Assuming that $D(L)$ is non-split, the choices in Algorithm 2 can be made so that the surface $\Sigma$ constructed satisfies
\begin{equation*}
-\chi(\Sigma) = \#(\text{Crossings of $D(L)$}) + \frac{1}{2}\cdot\#(\text{Cusps of $D(L)$}) -1.
\end{equation*}
\end{lemma}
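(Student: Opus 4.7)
The plan is to use the ribbon-to-graph deformation retract. Since $\Sigma$ retracts onto the underlying Legendrian graph $\bar L$ produced by Algorithm~2, we have $\chi(\Sigma)=\chi(\bar L)$, so I will compute $\chi(\bar L)$ by tracking the effect of each arc adjoined in Steps~1--3. Starting from $\chi(L)=0$ (a disjoint union of circles), any added Legendrian arc has both endpoints on the current graph, so attaching it either merges two components (dropping $b_{0}$) or creates a new independent cycle (raising $b_{1}$), and in either case reduces $\chi$ by exactly $1$. Hence $-\chi(\Sigma)$ equals the total number of arcs adjoined during the algorithm.

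I then break this count into the three steps: Step~1 adjoins one arc per crossing, giving $c:=\#(\text{Crossings of }D(L))$ arcs; Step~2 adjoins $n-1$ arcs, where $n$ is the number of connected components of $L'$; Step~3 adjoins $s$ non-destabilizable arcs to partition the non-elementary small disks. Thus
\begin{equation*}
-\chi(\Sigma) \;=\; c+(n-1)+s,
\end{equation*}
and it remains to show that the choices can be arranged so that $(n-1)+s=\tfrac{1}{2}\#(\text{Cusps of }D(L))-1$. The non-split hypothesis is used here in order to route the Step~2 arcs in the front projection with cusp endpoints disjoint from $D(L')$, producing a connected planar graph $D(L'')$ to which Euler's formula applies.

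The bookkeeping goes as follows. Applying Euler's formula to $D(L'')$, whose planar vertices are the $c$ four-valent X-vertices from Step~1 together with the $2(n-1)$ three-valent Step~2 cusp endpoints (degree sum $4c+6(n-1)$, giving $E_{p}=2c+3(n-1)$), one computes the number of bounded planar regions to be $F_{0}=c+n$. For each bounded region $\disk$, Equation~\ref{Eq:Tb} together with Proposition~\ref{Prop:TBCompute} expresses $-tb(\disk)$ as half the number of standard cusps of $D(L)$ on $\partial\disk$. The minimum Step~3 arc count required to elementarize every small disk is $s=\sum_{\disk}\bigl(-tb(\disk)-1\bigr)=\sum_{\disk}(-tb(\disk))-F_{0}$. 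Choosing the Step~2 cusp endpoints (made possible by non-splitness) so that each surviving cusp is incident to the correct number of bounded regions, the sum $\sum_{\disk}(-tb(\disk))$ evaluates to $F_{0}+\tfrac{1}{2}\#(\text{Cusps})-n$, whence $s=\tfrac{1}{2}\#(\text{Cusps})-n$ and the claimed equality follows.

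The main obstacle is the last paragraph's cusp-incidence accounting: each standard cusp of $D(L)$ may lie on the boundary of one or two bounded regions depending on whether it sits on the boundary of the unbounded planar region, and each Step~2 endpoint removes a cusp from the surviving pool. The non-split hypothesis is used to guarantee that one can route the Step~2 arcs so that every remaining cusp of $D(L)$ meets exactly two bounded regions, making the sum $\sum(-tb(\disk))$ evaluate to the required value. This is the only step that needs genuine geometric input beyond the two topological computations ($\chi$ via arc counts, and $F_{0}$ via planar Euler).
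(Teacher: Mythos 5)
Your first step---reducing $-\chi(\Sigma)$ to the number of arcs adjoined in Steps 1--3---is correct and is essentially equivalent to the paper's count ($\chi(\Sigma)=1-\#\{\text{elementary disks}\}$, since $\Sigma$ retracts onto the connected graph $\bar L$). The genuine gap is in your final bookkeeping paragraph, where $\sum_{\disk}(-tb(\disk))$ is evaluated. The identity ``$-tb(\disk)$ equals half the number of standard cusps of $D(L)$ on $\partial\disk$'' is not what Proposition \ref{Prop:TBCompute} gives: after smoothing, the boundary of a small disk acquires left- and right-cusp corners not only at genuine cusps of $D(L)$ but also at the X-vertices created in Step 1. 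Each crossing contributes a right-cusp corner to the region on its left and a left-cusp corner to the region on its right, and these corners account for the \emph{entire} $\#(\text{Crossings})$ term in the correct identity $\sum_{\disk}(-tb(\disk))=\#(\text{Crossings})+\frac{1}{2}\#(\text{Cusps})$. Moreover, a genuine cusp of $D(L)$ appears as a standard (left or right) cusp on the boundary of exactly \emph{one} adjacent region, namely the one it opens into; on the other adjacent region it is a reflex corner that is eliminated by the smoothing and contributes nothing to $tb$. So your normalization ``each cusp meets exactly two bounded regions, hence is counted twice'' both misstates the geometry and is internally inconsistent: applied literally it yields $\sum_{\disk}(-tb(\disk))=\#(\text{Cusps})$, not the value $F_{0}+\frac{1}{2}\#(\text{Cusps})-n$ that your computation of $s$ requires. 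The two errors do not cancel, so the count of Step 3 arcs is not established as written.

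You have also misplaced the role of the non-split hypothesis. Non-splitness forces $D(L)$, and hence $D(L')$, to be connected as a subset of $\mathbb{R}^{2}$ (otherwise a boundary circle of a regular neighborhood of one planar component of $D(L)$ would bound a disk containing some but not all components), so $n=1$ and Step 2 contributes no arcs at all; there is nothing to ``route.'' What the hypothesis actually buys is that the accounting can be done on $L''=L'$ with every $-\frac{1}{2}$ landing in a bounded region: each crossing contributes $-\frac{1}{2}$ to exactly two small cycles and each cusp contributes $-\frac{1}{2}$ to exactly one, which is the displayed identity in the paper's proof and the step your argument needs but does not supply. Once that identity is in place (and with $n=1$, so $F_{0}=c+1$), your framework does close up: $s=\sum_{\disk}(-tb(\disk))-F_{0}=\frac{1}{2}\#(\text{Cusps})-1$ and $-\chi(\Sigma)=c+s$ gives the stated formula.
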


\begin{proof}
The Euler characteristic of the page $\Sigma$ of our open book is given by one minus the total number of elementary disks.  This is a simple consequence of the fact that $\Sigma$ deformation retracts onto a wedge of these cycles.  Again consider the graph $L''$ from Step 2 of Algorithm 2 and let $\lbrace C_{j} \rbrace$ denote its collection of small cycles.  Note that each crossing in $L$ contributes a $-\frac{1}{2}$ to each of exactly two small cycles, and that each cusp of $L$ contributes $-\frac{1}{2}$ to exactly one small cycle.  Therefore
\begin{equation*} \label{Eq:EulerBound}
\sum tb(C_{j}) = -\#(\text{Crossings of $D(L)$}) - \frac{1}{2}\cdot\#(\text{Cusps of $D(L)$}).
\end{equation*}
In completing $L''$ (the graph constructed from L at the end of Step 2) to the 1-skeleton of a contact cell decomposition in Step 3 we added arcs to the $C_{j}$ so as to increase all of their Thurston-Bennequin invariants to $-1$.  For each $j$, it is obvious that this can be done with $-tb(C_{j})$ many non-destabilizeable arcs as $C_{j}$ is embedded in the front projection with no crossings.  We conclude that the entire construction can be carried out so that there is a total number of $-\sum tb(C_{j})$ elementary cycles.
\end{proof}

\begin{rmk} The proof above indicated that for some front diagrams, Step 3 of Algorithm 2 can be carried out in such a way that improves the Euler characteristic bound of Equation \ref{Eq:Ineq}.  See Section \ref{Sec:L41} for an example.
\end{rmk}

\begin{lemma} \label{Thm:ConnectedBinding}
By adding sufficiently many Legendrian arcs in Step 2 of the algorithm in the appropriate manner, we may assume that the binding of the open book constructed is connected.
\end{lemma}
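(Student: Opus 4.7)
The plan is to add more Legendrian arcs in Step 2 than the minimum $n-1$ needed for connectedness, using the extra arcs to successively merge boundary components of the ribbon. Each additional arc attaches a band to the ribbon, and when the arc's endpoints lie on cusps belonging to distinct boundary components of the current ribbon, the resulting band merges those two components into one.

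First I would analyze how adding a single Legendrian arc $\gamma$ (satisfying the conditions of Step 2) to a graph $\bar{L}$ affects the associated ribbon $\Sigma$. Since $\gamma$ has its endpoints at two cusps $c_1, c_2$ and is tangent to $\partial_y$ there, the ribbon of $\bar{L} \cup \gamma$ is obtained from $\Sigma$ by attaching a band $B$ along $\gamma$, with one end attached to $\partial\Sigma$ near each $c_i$. By Step 4 of the algorithm (Figure \ref{Fig:CuspRibbon}), each cusp $c_i$ sits on exactly one boundary component of $\Sigma$, since the two transverse push-offs through the cusp are joined into a single local arc of $\partial\Sigma$. Because the new ribbon $\Sigma \cup B$ embeds in $\mathbb{R}^3$ it is automatically orientable, so the band is attached coherently: if $c_1$ and $c_2$ lie on distinct boundary components of $\Sigma$ the band merges them (boundary count drops by $1$), and if they lie on the same component the component splits into two (boundary count rises by $1$).

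Next I would verify that whenever $k := \#(\partial\Sigma) \geq 2$ there are cusps of $\bar{L}$ on distinct boundary components of $\Sigma$. Each boundary component of $\Sigma$ runs through some local piece of $\bar{L}$ near a cusp or crossing; if necessary we can add auxiliary non-destabilizable arcs in Step 3 to guarantee that every boundary component passes through at least one cusp. With $k \geq 2$, this partition of cusps among components is non-trivial, so we may pick cusps $c_1, c_2$ on different components. A Legendrian arc $\gamma$ connecting them is then easy to construct: route $\gamma$ through a large $y$-coordinate (for instance via a half-circle in a $(y,z)$-plane), with $\partial_y$-tangency at both endpoints, so the front projection has embedded interior disjoint from $D(\bar{L})$, as Step 2 requires. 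Adding $\gamma$ to the graph drops the boundary count from $k$ to $k-1$; iterating $k-1$ times brings the count to $1$.

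The main obstacle is the local orientation analysis in the first paragraph: one must check that the band $B$ attached at the two cusps is orientation-preserving, so that the merge-versus-split dichotomy is a clean $\pm 1$ change in the boundary count. This ultimately reduces to the fact that the transverse push-offs of a Legendrian graph are globally co-oriented by the Reeb field $\partial_z$, which forces the ribbon obtained by grafting $B$ onto $\Sigma$ to remain orientable; once this is established, the remaining steps (applicability of Theorem \ref{Thm:GRibbon}, the description of monodromy via Theorem \ref{Thm:Mono}, etc.) carry through verbatim for the enlarged graph, producing a supporting open book of $\Sthree$ with connected binding which still contains $L$ in a single page.
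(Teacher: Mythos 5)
Your overall strategy---induct on $\#(\partial\Sigma)$, adding one Legendrian arc at a time whose ribbon-band merges two boundary components---is the same as the paper's, and the merge-versus-split dichotomy for attaching a band is fine. The gaps are in how you locate the two attaching points and how you connect them. The claim that ``each cusp sits on exactly one boundary component of $\Sigma$, since the two transverse push-offs through the cusp are joined into a single local arc of $\partial\Sigma$'' is false: at a cusp through which the graph passes, the boundary of the ribbon consists of two locally disjoint push-off arcs, one positive and one negative (Step 4 of Algorithm 2), which are not joined there and may lie on different global components. The $tb=-1$ unknot already defeats this: its ribbon is an annulus, $\#(\partial\Sigma)=2$, yet every cusp is adjacent to \emph{both} boundary circles, so there is no pair of cusps ``on distinct components'' in your sense, and which components your band joins depends on which side of the ribbon it attaches to---something you do not control. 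Separately, the connecting arc you propose is not Legendrian as described (an arc in a $(y,z)$-plane has $dx=0$, hence $dz=y\,dx=0$, so it is forced to be a straight segment at constant $(x,z)$), and Step 2 requires the front projection of an added arc to be embedded and disjoint from $D(\bar{L})$, which forces its endpoints to lie on the boundary of a common complementary region of the front diagram; an arbitrary pair of cusps need not satisfy this, so ``route through large $y$'' does not produce an admissible arc in general.

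The paper resolves both issues with one device you do not use: the arc basis $\{\gamma_{j}\}$ of Theorem \ref{Thm:Mono}, dual to the elementary disks. Since $\Sigma\setminus\cup\gamma_{j}$ is a disk, $\partial\Sigma$ is disconnected if and only if some $\gamma_{j}$ has its two endpoints on distinct boundary components; this gives the existence statement your argument lacks, and it identifies the correct attaching points (the two points where $\disk_{j}$ meets $\partial\Sigma$, near the left and right cusps of $\partial\disk_{j}$, on the sides dictated by the positive/negative shading). The new non-destabilizeable arc is then drawn \emph{inside} the elementary disk $\disk_{j}$, joining its left and right cusps, so its front projection is automatically embedded and disjoint from the rest of the diagram, the two resulting disks are again elementary, and the band merges the two components containing the endpoints of $\gamma_{j}$, dropping $\#(\partial\Sigma)$ by exactly one. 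Without this (or some substitute criterion that both certifies the existence of a suitable pair of attaching points and keeps the enlarged graph a valid contact cell decomposition), your induction does not get started.
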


\begin{figure}[h]
    \begin{overpic}[scale=.7]{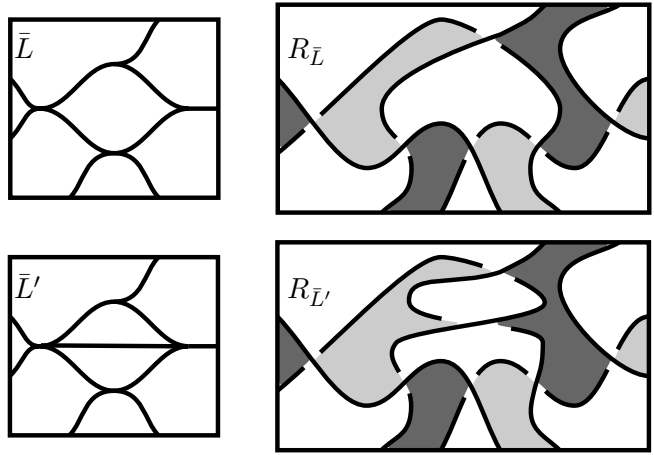}
        \put(1,61.5){$\bar{L}$}
        \put(1,24){$\bar{L}'$}
        \put(43.5,61.5){$R_{\bar{L}}$}
        \put(43.5,24){$R_{\bar{L}'}$}
    \end{overpic}
    \caption{The proof of Theorem \ref{Thm:ConnectedBinding}.}
    \label{Fig:ConnectedBinding}
\end{figure}

\begin{proof}
Let $\Sigma$ be a surface constructed from Algorithm 2 as the ribbon of a Legendrian graph $\bar{L}$.  Consider the arc basis $\lbrace \gamma_{j} \rbrace$ of $\Sigma$ from the proof of Theorem \ref{Thm:Mono}.  As $\Sigma\setminus(\cup\gamma_{j})$ is a disk, $\partial \Sigma$ is connected if and only if for all $j$, the endpoints of the arc $\gamma_{j}$ lie on the same boundary component of $\Sigma$.  Suppose that the number of boundary components of $\Sigma$ is $\geq 2$.  Then for some $j$ the boundary points of the arc $\gamma_{j}$ live on distinct boundary components of $\Sigma$.  Extend $\bar{L}$ to a Legendrian graph $\bar{L}'$ by adding a non-destabilizeable Legendrian arc to $\bar{L}$ connecting the cusps on the boundary of the elementary disk $\disk_{j}$ as in Figure \ref{Fig:ConnectedBinding}.  Then the ribbon $\Sigma'$ of $\bar{L}'$ is again the page of an open book supporting $\Sthree$ with $\#(\partial \Sigma') = \#(\partial \Sigma) -1$.  Therefore the proposition follows by inducting on $\#(\partial \Sigma)$.
\end{proof}

\section{Applications to the study of support invariants and overtwisted surgery diagrams}\label{Sec:SupportInv}
In this section we apply Theorem \ref{Thm:AlgLink} to study support invariants of, as well as the detect overtwisted disks in contact manifolds obtained by contact surgery on Legendrian links in $\Sthree$.  We begin by stating a priori bounds on the support genus and norm of these manifolds in terms of classical link data from Equation \ref{Eq:Ineq}.

\subsection{A priori bounds from surgery}
For a Legendrian knot $K$, let $C(K)$ denote the minimal possible number of crossings of all front projection diagrams of $K$.

\begin{cor} \label{Thm:Bounds}
Let $K$ be a Legendrian knot in $\Sthree$, and suppose that $\Mxi$ is a contact manifold obtained by contact Dehn surgery with coefficient $\frac{1}{k}$ on the $K$ for some $k\in\mathbb{Z}$.  Then
\begin{equation*}
\begin{gathered}
sn(S^{3},\xi_{std},K), \; sn\Mxi \leq 2C(K) - tb(K)+1,\; \text{and}\\
sg(S^{3},\xi_{std},K), \; sg\Mxi \leq C(K) -\frac{1}{2}tb(K) +1.
\end{gathered}
\end{equation*}
\end{cor}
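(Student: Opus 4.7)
The plan is to apply Algorithm 2 (Theorem \ref{Thm:AlgLink}) directly to a front diagram of $K$ realizing $C(K)$ crossings, and then translate the resulting bound on $-\chi(\Sigma)$ into bounds on the support norm and genus using the relation between cusps and the Thurston--Bennequin number, together with $-\chi(\Sigma) = 2g(\Sigma) - 2 + \#(\partial\Sigma)$.

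First I would choose a front projection $D(K)$ with exactly $C(K)$ crossings and, since $K$ is connected, observe that $D(K)$ is automatically non-split. Applying Theorem \ref{Thm:AlgLink} produces a supporting open book $\AOB$ of $\Sthree$ with $K$ contained in a single page and
\begin{equation*}
-\chi(\Sigma) = C(K) + \tfrac{1}{2}\cdot\#(\text{cusps of } D(K)) - 1.
\end{equation*}
Next I would invoke the standard front-projection formula $tb(K) = w(D(K)) - \tfrac{1}{2}\cdot\#(\text{cusps of } D(K))$ (see Equation~\ref{Eq:Tb}), which rearranges to $\#(\text{cusps}) = 2w(D(K)) - 2\,tb(K)$. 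Since the writhe satisfies $w(D(K)) \leq C(K)$, this gives $\#(\text{cusps}) \leq 2\,C(K) - 2\,tb(K)$. Substituting into the expression for $-\chi(\Sigma)$ yields
\begin{equation*}
-\chi(\Sigma) \leq 2\,C(K) - tb(K) - 1 \leq 2\,C(K) - tb(K) + 1,
\end{equation*}
which is precisely the desired support-norm bound for $(S^3,\xi_{std},K)$. For the support genus, note that $\Sigma$ is connected (it is the ribbon of a connected graph), so $-\chi(\Sigma) = 2g(\Sigma) - 2 + \#(\partial\Sigma) \geq 2g(\Sigma) - 1$. Solving and combining with the previous inequality gives $g(\Sigma) \leq C(K) - \tfrac{1}{2}\,tb(K) \leq C(K) - \tfrac{1}{2}\,tb(K) + 1$.

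To pass from $(S^3,\xi_{std},K)$ to $\Mxi$ I would appeal to Theorem \ref{Thm:TwistSurgery}: since $K$ lives on a page of $\AOB$, contact $\pm 1$ surgery on $K$ produces the open book $(\Sigma,\Phi\circ D^{\mp}_{K})$, which has the same underlying surface $\Sigma$. For a general contact $\tfrac{1}{k}$ surgery, I would use the standard fact recalled in Section~\ref{Sec:Conv} that contact $\tfrac{1}{k}$ surgery on $K$ is equivalent to contact $\mathrm{sgn}(k)$ surgery on $|k|$ Reeb pushoffs of $K$, together with the Legendrian realization remarks of Section \ref{Sec:LegRel} to place these pushoffs on nearby pages of $\AOB$. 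Iterated application of Theorem \ref{Thm:TwistSurgery} then presents $\Mxi$ by an open book with the same page $\Sigma$, so the inequalities derived above transfer verbatim to $sn\Mxi$ and $sg\Mxi$.

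The only mild subtlety, and the step I expect to require the most care, is the transition from the $K$-relative bounds to the $\Mxi$ bounds: one must verify that all $|k|$ Reeb pushoffs of $K$ can be simultaneously realized as Legendrian curves on pages of $\AOB$ without enlarging $\Sigma$. This is handled by the one-parameter version of Legendrian realization discussed in Section \ref{Sec:LegRel}, placing each pushoff on its own nearby page, so no modification of $\Sigma$ is needed and the Euler-characteristic and genus estimates propagate without loss.
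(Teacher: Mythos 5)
Your proposal is correct and follows essentially the same route as the paper: apply Theorem \ref{Thm:AlgLink} to a minimal-crossing front of $K$, convert the cusp count into $tb(K)$ via Equation \ref{Eq:Tb} (the paper bounds by positive crossings where you bound the writhe by $C(K)$ — the same estimate), use $-\chi(\Sigma)=2g(\Sigma)-2+\#(\partial\Sigma)$ for the genus, and reduce the $\Mxi$ bounds to the $K$-relative ones via Theorem \ref{Thm:TwistSurgery}. Your explicit treatment of the $\tfrac{1}{k}$ case via Reeb pushoffs on nearby pages is a detail the paper leaves implicit, and it is handled correctly.
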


\begin{proof}
Note that by Theorem \ref{Thm:TwistSurgery} it suffices to prove the inequalities for $sn(S^{3},\xi_{std},K)$ and $sg(S^{3},\xi_{std},K)$.  Let $D(K)$ be a front diagram for $K$.  By Theorem \ref{Thm:AlgLink} and Equation \ref{Eq:Tb} there is an open book $\AOB$ supporting $\Sthree$ which contains $K$ in a single page satisfying
\begin{equation*}
\begin{aligned}
-\chi(\Sigma) & = 2\#(\text{Positive crossings of $D(K)$}) -tb(K) +1 \\
& \leq 2\#(\text{Crossings of $D(K)$}) -tb(K) +1.
\end{aligned}
\end{equation*}
Consideration of the above inequality over all possible front projection diagram establishes the bound on the support norm.  As for the support genus, consider the fact that $\Sigma$ has at least one boundary component and apply the above inequality together with
\begin{equation*}
-\chi (\Sigma) = 2g(\Sigma) -2 + \#(\partial\Sigma).
\end{equation*}
\end{proof}

\subsection{Legendrian torus knots}\label{Sec:TorusKnots}

Figure \ref{Fig:TorusKnots} displays front projections of Legendrian $(2n+1,2)$-torus knots ($n\geq 1$) in $\Sthree$.  According to the classification of Legendrian torus knots in \cite{EtHo:KnotsI}, these are the only non-destabilizeable Legendrian representatives (up to Legendrian isotopy).

\begin{figure}
	\begin{overpic}[scale=.7]{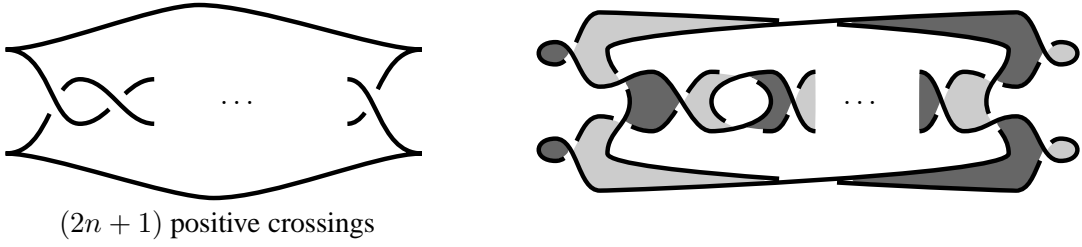}
        \put(78,9){$\dots$}
        \put(20,9){$\dots$}
        \put(5,-3){$(2n+1)$ positive crossings}
    \end{overpic}
    \vspace{4mm}
    \caption{On the left is a Legendrian $(2n+1,2)$-torus knots $K_{n}$.  On the right is a page of an open book supporting $\Sthree$ and containing $K_{n}$, constructed from Algorithm 2.}
    \label{Fig:TorusKnots}
\end{figure}

\begin{thm}  Let $K\subset\Sthree$ be a Legendrian $(2n+1,2)$-torus knot with Thurston-Bennequin number $tb(K)=2n-1-p$ for some $p\in\mathbb{Z}$.  Then $K$ is contained in the page $\Sigma$ of an open book $\AOB$ supporting $\Sthree$ where $\Sigma$ has the topological type of a $(2n+1+p)$-punctured torus.  Therefore
\begin{equation*}
sg(S^{3},\xi_{std},K)\leq 1\quad\text{and}\quad sn(S^{3},\xi_{std},K)\leq 2n+1+p.
\end{equation*}
\end{thm}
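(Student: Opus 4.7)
The plan is to establish the conclusion first for the Legendrian $(2n+1,2)$-torus knot of maximal Thurston--Bennequin number ($tb = 2n-1$, corresponding to $p=0$), and then propagate it to all lower $tb$-values by stabilization.

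For the maximal case, apply the algorithm of Theorem \ref{Thm:AlgLink} to the standard front diagram on the left of Figure \ref{Fig:TorusKnots}, which has $2n+1$ positive crossings and four cusps. A naive application of Equation \ref{Eq:Ineq} yields $-\chi(\Sigma) = (2n+1)+2-1 = 2n+2$, corresponding to $2n+3$ elementary disks in the resulting contact cell decomposition. However, by choosing the non-destabilizeable arcs in Step 3 of the algorithm carefully---exactly the type of improvement flagged in the remark following Lemma \ref{Thm:ConnectedBinding}---one can complete the $1$-skeleton using only $2n+2$ elementary disks, so that $-\chi(\Sigma) = 2n+1$. This is precisely the surface depicted on the right of Figure \ref{Fig:TorusKnots}. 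To identify its topological type, I would verify directly from the picture that the ribbon is connected, orientable, and has exactly $2n+1$ boundary components; combining this with $-\chi(\Sigma) = 2g(\Sigma) + \#\partial\Sigma - 2$ forces $g(\Sigma) = 1$, so $\Sigma$ is a $(2n+1)$-punctured torus containing $K$.

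For $p > 0$, I would invoke the Etnyre--Honda classification of Legendrian torus knots \cite{EtHo:KnotsI}: every Legendrian $(2n+1,2)$-torus knot with $tb = 2n-1-p$ is obtained from the maximal-$tb$ representative by a sequence of $p$ (positive or negative) stabilizations. Each such Legendrian stabilization, performed on a knot already lying in a page of a supporting open book, is realized by a positive stabilization of the open book itself, as in Figure \ref{Fig:Stabilize}. The effect on the page is to attach a one-handle in such a way that one boundary component is added while the genus is unchanged. After $p$ stabilizations, the page becomes a $(2n+1+p)$-punctured torus still containing the stabilized knot. The inequalities $sg(S^3,\xi_{std},K) \leq 1$ and $sn(S^3,\xi_{std},K) \leq 2n+1+p$ then follow immediately from the definitions applied to the open book just constructed.

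The main technical obstacle is the single-disk savings in Step 3 of the algorithm needed to reach $-\chi(\Sigma) = 2n+1$ rather than $2n+2$. I expect this to be verified by exhibiting the explicit Step 3 completion on the right of Figure \ref{Fig:TorusKnots} and checking the disk count by inspection; the analogous phenomenon handled in Section \ref{Sec:L41} supplies the template. Everything else---the stabilization step, the identification of boundary and genus counts, and the deduction of the support-invariant bounds---is routine bookkeeping once the picture for the maximal-$tb$ representative is in hand.
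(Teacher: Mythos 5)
Your overall route is the same as the paper's: run Algorithm 2 on the maximal-$tb$ front to put $K_{n}$ on a $(2n+1)$-punctured torus page, then realize the $p$ Legendrian stabilizations by genus-preserving page stabilizations as in Figure \ref{Fig:Stabilize}. The stabilization step and the genus/boundary bookkeeping match the paper exactly. The one place where your justification goes astray is the mechanism you propose for obtaining $-\chi(\Sigma)=2n+1$ rather than the value $2n+2$ predicted by Equation \ref{Eq:Ineq}. There is no ``single-disk savings in Step 3'' to be found here, because Step 3 is vacuous for this front: after crossing completion, every bounded complementary region of the diagram is already an elementary disk. Concretely, the bounded regions are the top region (whose smoothed boundary has cusps only at the upper left and upper right cusps of $K_{n}$, since the corner at the top quadrant of each crossing smooths without a cusp, so $tb=-1$), the symmetric bottom region ($tb=-1$), and the $2n$ bigons between consecutive crossings (each acquiring one smoothed cusp from the right quadrant of one crossing and one from the left quadrant of the next, so $tb=-1$). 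That is $2n+2$ elementary disks with no Step 3 arcs added at all, whence $-\chi(\Sigma)=(2n+2)-1=2n+1$.

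The reason this beats Equation \ref{Eq:Ineq} is not a clever partition but an over-count in that formula's derivation: the identity $\sum tb(C_{j})=-\#(\text{crossings})-\frac{1}{2}\#(\text{cusps})$ assumes each crossing contributes $-\frac{1}{2}$ to two bounded small cycles, whereas for this front the left quadrant of the first crossing and the right quadrant of the last crossing open into the unbounded region, so those two crossings each contribute to only one bounded cycle. This is a different phenomenon from the $L(4,1)$ example you cite as a template, where the improvement genuinely comes from choosing the Step 3 arcs differently for a small disk with $tb<-1$. None of this affects the validity of your conclusion -- the disk count you propose to ``check by inspection'' is indeed $2n+2$ -- but the technical obstacle you flag does not exist, and looking for it in Step 3 would be fruitless.
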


\begin{proof}
Applying Algorithm 2 to the front projection of the Legendrian knot $K_{n}$ of Figure \ref{Fig:TorusKnots} will embed $K_{n}$ in the page $\Sigma'$ of an open book $(\Sigma',\Phi')$ supporting $\Sthree$ which has the topological type of a $(2n+1)$-punctured torus.  By the classification of torus knots \cite{EtHo:KnotsI}, $K$ may be obtained from $K_{n}$ be $p$ positive and negative stabilizations.  Then an open book containing $K$ can be obtained from $(\Sigma',\Phi')$ by stabilizing the page $p$ times as in Figure \ref{Fig:Stabilize}.  As the boundary of $\Sigma'$ is disconnected, it can be arranged that each of these stabilizations preserves the genus.
\end{proof}

\subsection{Tight contact structures on $L(4,1)$}\label{Sec:L41}

\begin{figure}[h]
	\begin{overpic}[scale=.7]{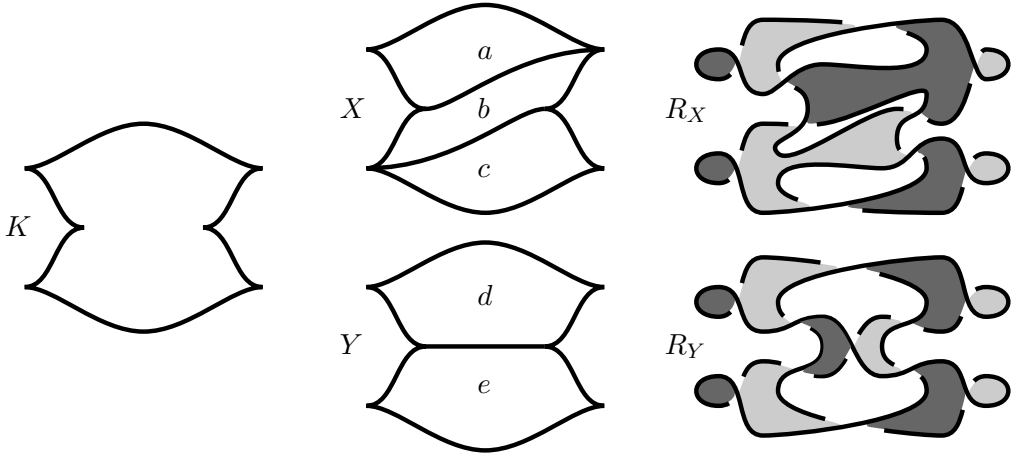}
        \put(-2,22){$K$}
        \put(32,10){$Y$}
        \put(32,34){$X$}
        \put(65,10){$R_{Y}$}
        \put(65,34){$R_{X}$}
        \put(46,40){$a$}
        \put(46,34){$b$}
        \put(46,28){$c$}
        \put(46,15){$d$}
        \put(46,6){$e$}
    \end{overpic}
	\caption{$K$ is a Legendrian unknot with $tb(K)=-3$ and $rot(K)=0$.  The middle column shows two contact cell decomposition of $\Sthree$ containing $K$.  The upper one has 1-skeleton $X$ and 2-cells labeled $a$, $b$, and $c$.  The lower one has 1-skeleton $Y$ and 2-cells $d$ and $e$.  The right column shows the ribbons $R_{X}$ and $R_{Y}$ of $X$ and $Y$, respectively.  The ribbon $R_{X}$ has the topological type of a 3-punctured disk and the associated open book $(R_{X},\Phi_{X})$ of $\Sthree$ has monodromy $\Phi_{X}=D^{+}_{\partial c}\circ D^{+}_{\partial b}\circ D^{+}_{\partial a}$.  The ribbon $R_{Y}$ has the topological type of a punctured torus and the associated open book $(R_{Y},\Phi_{Y})$ of $\Sthree$ has monodromy $\Phi_{Y}=D^{+}_{\partial e}\circ D^{+}_{\partial d}$.}
    \label{Fig:L41Ribbon}
\end{figure}

Figure \ref{Fig:L41Ribbon} displays two distinct contact cell decompositions of $\Sthree$ which contain the Legendrian unknot with $rot=0$ and $tb=-3$.

\begin{thm} \label{Thm:L41}
Let $\xi_{0}$ and $\xi_{1}$ denote the two contactomorphism classes of tight contact structures on the lens space $L(4,1)$ with zero and non-zero Euler classes, respectively (see \cite{Honda:TightClassI}).  Then
\begin{equation*}
\begin{gathered}
sg(L(4,1),\xi_{0})=sg(L(4,1),\xi_{1}) = 0,\quad bn(L(4,1),\xi_{0}) = bn(L(4,1),\xi_{1}) = 4,\\
sn(L(4,1),\xi_{0})= 1,\quad\text{and}\quad sn(L(4,1),\xi_{1})=2.
\end{gathered}
\end{equation*}
\end{thm}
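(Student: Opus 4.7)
The plan is to construct explicit supporting open books achieving the claimed upper bounds using Figure \ref{Fig:L41Ribbon} with Theorem \ref{Thm:TwistSurgery}, then to match these with lower bounds coming from the Chern-class obstruction of Remark \ref{Rmk:ChernClass} together with the classification of open books whose pages have small Euler characteristic.

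The Legendrian unknot $K$ in Figure \ref{Fig:L41Ribbon} has $tb(K) = -3$ and $rot(K) = 0$. Legendrian surgery on $K$ is topologically $(-4)$-surgery on the unknot, producing $L(4,1)$, and Gompf's formula combined with $rot(K) = 0$ forces $c_1 = 0$, so the result is $(L(4,1), \xi_0)$. Since $K$ lies on both $R_X$ and $R_Y$, Theorem \ref{Thm:TwistSurgery} yields two supporting open books of $(L(4,1), \xi_0)$: one planar with page $R_X$ (four binding components, $-\chi = 2$) and one with once-punctured-torus page $R_Y$ ($-\chi = 1$). These immediately give $sg(\xi_0) = 0$, $bn(\xi_0) \leq 4$, and $sn(\xi_0) \leq 1$. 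For $\xi_1$, I would replace $K$ with a Legendrian unknot $K'$ satisfying $tb(K') = -3$ and $rot(K') = 2$ (all stabilizations on one side); Legendrian surgery on $K'$ again produces $L(4,1)$ topologically, but Gompf's formula now yields $c_1 = 2 \in \mathbb{Z}/4 = H^2(L(4,1);\mathbb{Z})$, identifying the result as $\xi_1$. Running an analogous $R_X$-type contact cell decomposition containing $K'$ in the 1-skeleton produces a planar supporting open book for $\xi_1$ with four binding components and $-\chi = 2$, so that $sg(\xi_1) = 0$, $bn(\xi_1) \leq 4$, and $sn(\xi_1) \leq 2$.

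For the lower bounds, $sn(\xi_i) \geq 1$ because an open book with disk page supports only $\Sthree$ while $L(4,1) \not\cong \Sthree$, giving $sn(\xi_0) = 1$. To prove $sn(\xi_1) \geq 2$ and $bn(\xi_0), bn(\xi_1) \geq 4$, I rule out supporting open books whose page has Euler characteristic $\geq -1$ and at most three boundary components. The once-punctured-torus case for $\xi_1$ is excluded immediately by Remark \ref{Rmk:ChernClass} since $c_1(\xi_1) \neq 0$. Annulus pages with positive monodromy $(D^+_c)^n$ support a tight contact structure on a specific oriented lens space with fundamental group $\mathbb{Z}/n$; direct verification shows that for $n = 4$ this lens space is not oriented-diffeomorphic to $L(4,1)$, while negative powers of $D^+_c$ yield overtwisted structures by Theorem \ref{Thm:GirouxStab}. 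The pair-of-pants case is the crux: its mapping class group fixing the boundary is generated by the three boundary-parallel Dehn twists, so every monodromy has the form $D_a^{\alpha}\circ D_b^{\beta}\circ D_c^{\gamma}$ and the resulting manifold is a small Seifert-fibered space; one enumerates the triples producing $L(4,1)$ as an oriented manifold and identifies the supported tight contact structure in each case via Honda's classification \cite{Honda:TightClassI}, verifying that neither $\xi_0$ nor $\xi_1$ arises.

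The main obstacle will be the pair-of-pants case: identifying for each integer triple $(\alpha, \beta, \gamma)$ producing $L(4,1)$ the precise tight contact structure supported and matching against \cite{Honda:TightClassI}. This amounts to a finite but somewhat laborious Seifert-fibered and Chern-class computation, which accounts for most of the technical work.
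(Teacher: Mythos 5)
Your proposal is correct and follows essentially the same route as the paper: upper bounds from the two ribbons of Figure \ref{Fig:L41Ribbon} (together with a stabilized unknot of rotation number $\pm2$ for $\xi_{1}$), $sg=0$ from planar realizability of Legendrian unknots, exclusion of the punctured-torus page for $\xi_{1}$ via the Chern class observation of Remark \ref{Rmk:ChernClass}, and elimination of disk, annulus, and pair-of-pants pages for the binding number and support norm lower bounds. The only real difference is that where you propose to enumerate pair-of-pants monodromies $D_{a}^{\alpha}\circ D_{b}^{\beta}\circ D_{c}^{\gamma}$ and the annulus cases by hand --- which you rightly flag as the laborious part --- the paper simply imports the needed classification from \cite{EtOzb:Support}, namely that any contact structure on $L(p,1)$ with $p>2$ supported on an annulus is overtwisted and that a tight contact manifold supported on a twice-punctured disk with first homology of order four must be $L(4,3)$ or $L(2,1)\# L(2,1)$, which disposes of your ``main obstacle'' in one citation.
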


\begin{proof}
We assume familiarity with the classification of tight contact structures on lens spaces.  See \cite{Honda:TightClassI}.  The two diffeomorphism classes of tight contact structures on $L(4,1)$ can be obtained by Legendrian surgery on the unknots whose classical invariants are $tb=-3,\;rot=0$ and $tb=-3,\;rot=\pm2$ in $\Sthree$.  As in the statement of the theorem, we will call these structures $\xi_{0}$ and $\xi_{1}$ respectively.  Note that this differs from the \emph{isotopy} classification, in which case there is no ambiguity in the sign of rotation number.  By starting with the Legendrian unknot with Thurston-Bennequin invariant $-1$ embedded in the page of an annulus open book for $\Sthree$ and applying positive and negative stabilizations as necessary, it follows that every Legendrian unknot can be realized in the page of a planar open book decomposition of $\Sthree$.  Since Legendrian surgery can then be realized as the precomposition of the monodromy of such an open book with a positive Dehn twist about the curve, we see that every tight contact structure on every lens space $L(p,1)$ has support genus zero.

The remainder of the proof is a recollection of results appearing in \cite{EtOzb:Support} together with the existence of the open book described in Figure \ref{Fig:L41Ribbon}.  There it is shown that $bn(L(4,1),\xi_{0})=4$.  This is a consequence of the fact that a planar  open book for $(L(4,1),\xi_{0})$ with four binding components exists, along with the following two observations: (1) Any contact structure on $L(p,1)$ with $p>2$ which is supported on an annulus must be overtwisted. (2)  If a tight contact manifold is given as an open book decomposition whose page is a twice punctured disk and whose first homology has order four, then it must be either $L(4,3)$ or $L(2,1)\# L(2,1)$.  Therefore, the tightness of $\xi_{1}$ and existence of a planar open book with four binding components (a disks with three punctures whose monodromy is a positive Dehn twist about every boundary component as in the previous paragraph) implies that same the proof can be applied verbatim to show that $bn(L(4,1),\xi_{1})=4$.

It only remains to calculate the support norms.  Any contact manifold $\Mxi$ for which $c(\xi)\neq 0$ cannot be supported by an open book whose page is a punctured torus.  See Remark \ref{Rmk:ChernClass}.  This, together with the existence of an open book decomposition whose page has Euler characteristic $-2$ and the above remark regarding annular open books of lens spaces leads to the conclusion that $sn(L(4,1),\xi_{1}) = 2$.  Figure \ref{Fig:L41Ribbon} shows that $sn(L(4,1),\xi_{0})=1$
\end{proof}

\subsection{Stabilization in surgery diagrams} \label{Sec:PosStab}

In this section we equate overtwistedness of a contact manifold $\Mxi$ with the existence of a certain type of surgery diagram determining $\Mxi$. This correspondence, stated in Theorem \ref{Thm:Destab}, may be viewed as a surgery theoretic interpretation of Theorem \ref{Thm:GirouxStab}(2).  Throughout, $L=L^{+}\cup L^{-}$ be a contact surgery diagram for the contact 3-manifold $\Mxi$.

\begin{defn}
For a Legendrian knot $K\subset L$, a \emph{standard meridian} of $K$ is a Legendrian unknot $\mu_{K}$ with Thurston-Bennequin invariant $-1$ which is topologically a meridian for $K$ and is not knotted with any other component of $L$.
\end{defn}
\begin{figure}[ht]
\begin{overpic}[scale=.75]{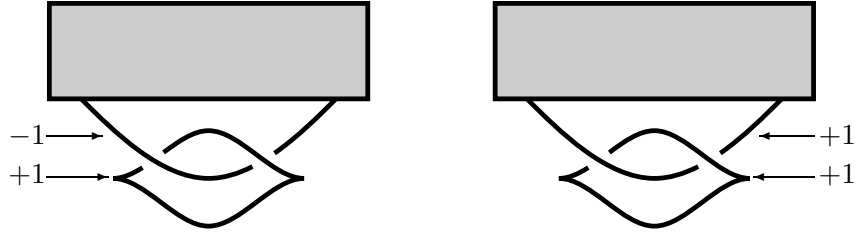}
    \put(-5,11){$-1$}
        \put(0,12){\vector(1,0){7.25}}
    \put(-5,6){$+1$}
        \put(0,6.75){\vector(1,0){8}}
    \put(100.5,11){$+1$}
        \put(100,12){\vector(-1,0){7.25}}
    \put(100.5,6){$+1$}
        \put(100,6.75){\vector(-1,0){8}}
\end{overpic}
\caption{The gray boxes represent an ambiguity as to what the remainder of the diagrams look like.  On the left is a positive stabilization as in Theorem \ref{Thm:Destab}(1), and on the right is a negative stabilization as in Theorem \ref{Thm:Destab}(2).  The contact manifold described by the surgery diagram on the right figure is overtwisted.}
\label{Fig:Stabs}
\end{figure}

\begin{thm} \label{Thm:Destab}
Let $\Mxi$ be a contact 3-manifold.
\be
\item If a surgery diagram $L$ for $\Mxi$ is such that there exists a knot, $K\subset L^{-}$, which has a standard meridian, $\mu_{K}\subset L^{+}$, then $K\cup \mu$ may be deleted from the diagram.
\item $\Mxi$ is overtwisted if and only if it admits a contact surgery diagram $L$, such that there is a knot, $K$ in $L^{+}$, and a standard meridian for $K$ which is also in $L^{+}$.
\ee
\end{thm}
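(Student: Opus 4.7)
The plan is to combine the open book / surgery correspondence (Theorems \ref{Thm:TwistSurgery}, \ref{Thm:Alg}, \ref{Thm:AlgLink}) with the negative-stabilization characterization of overtwistedness (Theorem \ref{Thm:GirouxStab}(2)), relying throughout on the conjugation identity $\phi \circ D^{\pm}_{\zeta} \circ \phi^{-1} = D^{\pm}_{\phi(\zeta)}$ for Dehn twists.

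For Part (1), use Theorem \ref{Thm:AlgLink} to place $K$ on the page $\Sigma$ of an open book $(\Sigma, \Phi)$ supporting the contact manifold $\Mxi_0$ obtained by performing the surgeries in $L \setminus (K \cup \mu_K)$. Positively stabilize this open book along an arc $\delta \subset \Sigma$ meeting $K$ transversally in a single point; this attaches a 1-handle $H$ to $\Sigma$ and composes the monodromy with $D^{+}_{\mu}$, where $\mu \subset \Sigma \cup H$ is a curve crossing the co-core of $H$ once. As a Legendrian knot in $\Sthree$, $\mu$ is an unknot with $tb = -1$ linking $K$ once, hence a standard Legendrian meridian of $K$; we identify $\mu$ with $\mu_K$. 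By Theorem \ref{Thm:TwistSurgery}, the $(-1,+1)$ surgery on $(K,\mu_K)$ produces monodromy $\Phi \circ D^{+}_{\mu} \circ D^{+}_{K} \circ D^{-}_{\mu} = \Phi \circ D^{+}_{D^{+}_{\mu}(K)}$. Since $K \subset \Sigma$ and $\mu$ crosses $H$ once, $D^{+}_{\mu}(K)$ crosses the co-core of $H$ exactly once, exhibiting $(\Sigma \cup H, \Phi \circ D^{+}_{D^{+}_{\mu}(K)})$ as a positive stabilization of $(\Sigma, \Phi)$. Both open books support $\Mxi_0$, so the pair $K \cup \mu_K$ can be deleted.

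For Part (2) the forward direction $(\Leftarrow)$ follows from the same calculation with the opposite sign: the $+1$ surgeries on $K$ and $\mu_K$ produce monodromy $\Phi \circ D^{+}_{\mu} \circ D^{-}_{K} \circ D^{-}_{\mu} = \Phi \circ D^{-}_{D^{+}_{\mu}(K)}$, a \emph{negative} stabilization of $(\Sigma, \Phi)$, so $\Mxi$ is overtwisted by Theorem \ref{Thm:GirouxStab}(2). For $(\Rightarrow)$, suppose $\Mxi$ is overtwisted; Theorem \ref{Thm:GirouxStab}(2) provides a supporting open book $(\Sigma_{1}, \Phi_{0} \circ D^{-}_{\gamma})$ that is a negative stabilization of some $(\Sigma_{0}, \Phi_{0})$, with $\gamma$ crossing the co-core of the new handle once. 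After further positive stabilizations to force connected binding (Lemma \ref{Thm:ConnectedBinding}) and a mapping-class conjugation of the monodromy, I may arrange for $\gamma$ to be the Lickorish generator $\alpha_{g+1}$ associated to the new handle of Figure \ref{Fig:LickorishCurves}. Applying Algorithm 1, Step 2 produces $+1$ surgeries on the push-offs $\alpha_{g+1}(0)$ and $\beta_{g+1}(-1)$, while the step corresponding to the factor $D^{-}_{\gamma}$ produces a $+1$ surgery on $\alpha_{g+1}(j/n)$. Because $\alpha_{g+1}$ and $\beta_{g+1}$ intersect once on $\Sigma_{1}$, their Legendrian realizations at distinct heights form a Hopf link of $tb=-1$ unknots in $\Sthree$, so $\beta_{g+1}(-1)$ is a standard meridian of $\alpha_{g+1}(j/n)$, and both carry $+1$-coefficient, producing the required pair.

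The hard step will be the conjugation in the $(\Rightarrow)$ direction: justifying that after changing the Dehn twist factorization one may always take the negative-stabilization curve $\gamma$ to be an embedded standard Lickorish generator of Figure \ref{Fig:LickorishCurves}, so that the constructed meridian pair appearing in the Algorithm 1 diagram is not entangled with the other surgery components in a way that violates the standard-meridian condition (unknotted with every other component of $L$).
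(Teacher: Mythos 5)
Parts (1) and the ``diagram $\Rightarrow$ overtwisted'' half of (2) in your proposal are correct and are in essence the paper's own argument: the paper also realizes $\mu_{K}$ as a push-off of a curve $\gamma$ whose positive twist already sits in the monodromy of an $\Sthree$ open book containing $L\setminus\mu_{K}$ in its pages (there $\gamma$ arises as an elementary cycle via Theorem \ref{Thm:Mono} rather than via an explicit stabilization, but this is the same mechanism), and then uses exactly your conjugation identity to exhibit the result as a positive, respectively negative, stabilization of the open book with the handle $H$ removed. The only bookkeeping you elide is the height ordering needed to justify the composition $\Phi\circ D^{+}_{\mu}\circ D^{\pm}_{K}\circ D^{-}_{\mu}$ when other surgery components are present; since the stabilizing arc can be taken in a small ball meeting only $K$, the twist $D^{+}_{\mu}$ commutes with the remaining factors and this is harmless.

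The existence direction of (2) is where your proposal has a genuine gap, and it is not quite the one you flagged. Even in the ideal situation where the negative-stabilization curve $\gamma$ \emph{is} the Lickorish generator $\alpha_{g+1}$ and the rest of the monodromy is supported away from the last handle, Algorithm 1 unavoidably places \emph{three} $+1$-framed components near that handle: the monodromy-correction curves $\alpha_{g+1}(0)$ and $\beta_{g+1}(-1)$ from Step 2, plus $\alpha_{g+1}(j/n)$ from the factor $D^{-}_{\gamma}$. In this configuration no component is a standard meridian of another: $\beta_{g+1}(-1)$ links both parallel copies of $\alpha_{g+1}$, and the two copies $\alpha_{g+1}(0)$ and $\alpha_{g+1}(j/n)$ are contact-framed push-offs of one another, hence have linking number $tb(\alpha_{g+1})=-1\neq 0$, so neither is unknotted from the other. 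Since the ``not knotted with any other component'' clause is exactly what makes the converse direction of (2) true (a meridian that links other components is merely a handle-slide configuration and need not detect overtwistedness), this cannot be waved away by the conjugation argument you identify as the hard step. The paper sidesteps the whole issue: it observes that a negative stabilization is a Murasugi sum with $\OBOT$, hence a contact connected sum with $\SOT$, takes a surgery diagram for the unstabilized summand via Corollary \ref{Thm:ContLickWall}, and adjoins a completely \emph{split} two-component diagram for $\SOT$ -- a $tb=-1$ unknot and its standard meridian, both with coefficient $+1$, unlinked from everything else. If you want to salvage your route, you would need to argue that the three curves near the last handle can be traded (e.g.\ by ribbon moves) for such a split pair, which is essentially re-deriving the connected-sum decomposition.
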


\begin{rmk} Theorem \ref{Thm:Destab} (a) can also be recovered from Proposition 2 in \cite{DG:Handles} (a handle slide) together with the authors' well known cancelation lemma.  See also Section \ref{Sec:KirbyEx}.  Their proof, which provides a more general result, relies on a convex surface argument.
\end{rmk}

First we consider the existence result in part (b).  This is obtained by a simple application of Theorem \ref{Thm:GirouxStab}.

\begin{lemma}
Suppose that $\Mxi$ is overtwisted.  Then it admits a surgery diagram $L\subset\Sthree$ for which the underlying link is split.  One of these split components is an unknot whose Thurston-Bennequin number is $-1$ together with its standard meridian, both of which have surgery coefficient $+1$.
\end{lemma}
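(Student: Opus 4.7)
The plan is to use Theorem \ref{Thm:GirouxStab}(2) to decompose $\Mxi$ as a contact connected sum, and then produce a surgery diagram as a disjoint union in which the prescribed split piece represents a specific overtwisted contact 3-sphere. Since $\Mxi$ is overtwisted, Theorem \ref{Thm:GirouxStab}(2) supplies a supporting open book $(\Sigma^+,\Phi^+)$ which is the negative stabilization of some $(\Sigma_0,\Phi_0)$. This negative stabilization is precisely the Murasugi sum of $(\Sigma_0,\Phi_0)$ with the negative Hopf-band open book $(A,D^-_\delta)$, where $A$ is an annulus and $\delta$ its core curve. Because Murasugi sum realizes contact connected sum at the level of contact manifolds, we obtain $\Mxi\cong(M_0,\xi_0)\,\#\,(S^3,\xi_{ot})$, with $(M_0,\xi_0)$ supported by $(\Sigma_0,\Phi_0)$ and $(S^3,\xi_{ot})$ the overtwisted contact 3-sphere supported by $(A,D^-_\delta)$.

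Next, I convert this decomposition into a disjoint-union surgery diagram. Corollary \ref{Thm:ContLickWall} produces a contact $\pm1$-surgery diagram $L_0\subset\Sthree$ for $(M_0,\xi_0)$, and placing $L_0$ and a separate surgery diagram for $(S^3,\xi_{ot})$ in disjoint Darboux balls presents the contact connected sum $\Mxi$ as a split diagram. It therefore suffices to exhibit $(S^3,\xi_{ot})$ by contact surgery on a $tb=-1$ Legendrian unknot with coefficient $+1$ together with its standard meridian, also with coefficient $+1$.

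To build this two-component diagram I positively stabilize $(A,D^-_\delta)$ once so that the binding becomes connected: the page becomes a once-punctured torus $T_1$ and the monodromy factors as $D^+_\eta\circ D^-_\delta$, where $\eta$ is the stabilizing curve. Taking $\alpha_1=\delta$ and $\beta_1=\eta$ as the Lickorish generators of $T_1$, I apply the algorithm of Theorem \ref{Thm:Alg}. The monodromy correction (Step 2) produces $\alpha_1$ at level $0$ and $\beta_1$ at level $-1$, each with surgery coefficient $+1$; the factor $D^-_\delta$ contributes a parallel copy of $\alpha_1$ at level $1/2$ with coefficient $+1$, and the factor $D^+_\eta$ contributes a parallel copy of $\beta_1$ at level $1$ with coefficient $-1$. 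The two $\beta_1$-parallel curves have opposite surgery coefficients and cancel via the Ding--Geiges cancellation lemma, leaving two parallel $tb=-1$ Legendrian unknots along $\alpha_1$, each with coefficient $+1$. Under the embedding of Lemma \ref{Lemma:Embed}, $\alpha_1$ is a $tb=-1$ Legendrian unknot in $\Sthree$, and its contact-framing pushoff is topologically a meridian of the original, so the two residual curves are a $tb=-1$ Legendrian unknot together with its standard meridian, as required.

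The main obstacle is the verification in the previous paragraph: that the contact-framing pushoff of a $tb=-1$ Legendrian unknot in $\Sthree$ is genuinely Legendrian isotopic to a standard meridian, and that the Ding--Geiges cancellation is compatible with the split-diagram construction from the second paragraph. Both reduce to framing computations — the contact framing on a $tb=-1$ unknot differs from the Seifert framing by $-1$, so its Legendrian pushoff has linking number $-1$ with the original and the pair forms a Hopf link topologically — combined with the classification of $tb=-1$ Legendrian unknots up to Legendrian isotopy.
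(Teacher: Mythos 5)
Your overall strategy is exactly the paper's: invoke Theorem \ref{Thm:GirouxStab}(2) to write the supporting open book as a negative stabilization, interpret this Murasugi sum as a contact connected sum $(M,\xi')\,\#\,\SOT$, apply Corollary \ref{Thm:ContLickWall} to the first summand, and place a two-component $(+1,+1)$ Hopf pair for $\SOT$ in a disjoint Darboux ball. The one place you go beyond the paper is in \emph{proving} that the $tb=-1$ unknot together with its standard meridian, both with coefficient $+1$, presents $\SOT$; the paper simply asserts this. Your derivation reaches the correct two-component diagram, but the step you use to get there is not a legitimate application of the Ding--Geiges cancellation lemma as stated: the curves $\beta_{1}(-1)$ (coefficient $+1$) and $\beta_{1}(1)$ (coefficient $-1$) are push-offs of one another, but the annulus $\beta_{1}\times[-1,1]$ joining them is punctured once by each of $\alpha_{1}(0)$ and $\alpha_{1}(1/2)$ (since $\alpha_{1}\cdot\beta_{1}=1$ on the page), so the pair is not an innermost canceling pair and cannot be deleted without first performing handle slides. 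The conclusion survives, but it is cleaner to bypass the punctured torus entirely: the $tb=-1$ Legendrian unknot $U$ is the core of the annular page of the open book $(A,D^{+}_{c})$ supporting $\Sthree$ (Figure \ref{Fig:UnknotMovie}), its contact push-off $U'$ is a parallel copy of $c$ on a nearby page, and contact $+1$ surgery on both yields, by Theorem \ref{Thm:TwistSurgery}, the monodromy $D^{+}_{c}\circ D^{-}_{c}\circ D^{-}_{c}=D^{-}_{c}$, i.e.\ exactly $\SOT=\OBOT$; here all curves are parallel, so no cancellation lemma is needed. Your final framing computation identifying the contact push-off of a $tb=-1$ unknot with a standard meridian is correct and is the same observation needed in either route.
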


\begin{proof}
By Theorem \ref{Thm:GirouxStab} there is an open book $\AOB$ whose negative stabilization is equivalent to $\Mxi$.  Let $(M,\xi ')$ be the contact structure on $M$ determined by $\AOB$, and let $\SOT = \OBOT$ be the overtwisted contact structure on the sphere determined by the open book whose page is an annulus and whose monodromy is a negative Dehn twist about a curve parallel to one of its boundary components.  From the open book perspective, $\Mxi$ is a Murasugi sum of $\AOB$ and $\OBOT$, and hence from the 3-manifold perspective is the contact connected sum of $(M,\xi ')$ with $\SOT$.  Apply Corollary \ref{Thm:ContLickWall} to $(M,\xi ')$ to obtain an equivalent surgery diagram $L'$.  $\SOT$ may be presented by Legendrian $+1$ surgery on the split component as described in the statement of the theorem.  Then contact surgery on the disjoint union of $L'$ and the split component will yield $(M,\xi ')\# \SOT =\Mxi$.
\end{proof}

The proof of the preceding lemma along with Theorem $\ref{Thm:GirouxStab}$ indicate that the existence of the type of surgery diagram described is also equivalent to overtwistedness.  This type of diagram may be likened to negatively stabilizing an open book by boundary connect summing its page with $\OBOT$.  Much wilder negative stabilizations are of course possible, by the fact that there is freedom in the choice of arc that can be used to perform a Murasugi sum.

\begin{lemma}
Suppose that $\Mxi$ is given by contact surgery on $L=L^{+}\cup L^{-} \subset \Sthree$.  Assume that there is a knot $K\subset L^{+}$ for which a standard meridian of $K$ is also in $L^{+}$.  Then there is a compatible open book decomposition for $\Mxi$ which is a negative stabilization of some other open book decomposition of $M$.  Hence, $\Mxi$ is overtwisted.
\end{lemma}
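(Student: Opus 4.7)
The plan is to apply Algorithm~2 of Theorem~\ref{Thm:AlgLink} to $L$ and then extract a negative stabilization structure from the resulting open book; Theorem~\ref{Thm:GirouxStab}(2) will then conclude the proof.

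First I would apply Algorithm~2 to $L$, producing a supporting open book $(\Sigma,\Phi_{S^3})$ of $\Sthree$ with $L$ contained in a single page. Theorem~\ref{Thm:Mono} gives the factorization $\Phi_{S^3}=D^+_{C_1}\circ\cdots\circ D^+_{C_n}$ into positive Dehn twists, one for each elementary disk of the contact cell decomposition constructed by the algorithm. Applying Theorem~\ref{Thm:TwistSurgery} once per surgery component, $\Mxi$ is supported by $(\Sigma,\Phi)$, where $\Phi$ is obtained from $\Phi_{S^3}$ by precomposing with $D^-_J$ for each $J\in L^+$ (in particular $D^-_K$ and $D^-_{\mu_K}$) and with $D^+_J$ for each $J\in L^-$.

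Next I would analyze the local picture near $\mu_K$. Because $\mu_K$ is a Legendrian $tb=-1$ unknot that topologically meridians $K$ and is unknotted from every other component of $L$, its front diagram is an oval with two cusps and two crossings with $K$, and with no other crossings. Exploiting the freedom in Steps~1--3 of Algorithm~2, I would arrange that $\mu_K$ itself is the boundary of a single elementary disk $\disk_{\mu_K}$ whose interior meets $K$ transversely once. Correspondingly, the portion of the ribbon $\Sigma$ along $\mu_K$ appears as a 1-handle $h$ attached to the rest of the page $\Sigma':=\Sigma\setminus h$, in which $\mu_K$ is a core curve and an arc of $K$ traverses the co-core of $h$ exactly once. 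Theorem~\ref{Thm:Mono} then contributes a factor $D^+_{\mu_K}$ to $\Phi_{S^3}$, and by choosing the linear extension of the partial order $\disk_j>\disk_i$ so that $\disk_{\mu_K}$ is minimal, I can take $D^+_{\mu_K}$ to be the outermost (leftmost) factor, with the remaining factor $\Phi'_{S^3}$ supported entirely in $\Sigma'$.

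Finally I would identify the stabilization. The monodromy becomes
\[
\Phi=D^+_{\mu_K}\circ\Phi'_{S^3}\circ D^-_{\mu_K}\circ D^-_K\circ(\text{remaining surgery twists}).
\]
Since $\Phi'_{S^3}$ and the remaining surgery twists are supported in $\Sigma'$, they commute with $D^-_{\mu_K}$, which then cancels with the outermost $D^+_{\mu_K}$. After conjugating so that $D^-_K$ becomes a negative Dehn twist about a curve $K'$ still crossing the co-core of $h$ exactly once, the monodromy reads $\Phi=D^-_{K'}\circ\Phi''$ with $\Phi''$ supported in $\Sigma'$. This exhibits $(\Sigma,\Phi)$ as the negative stabilization of $(\Sigma',\Phi'')$ by the 1-handle $h$ with stabilizing curve $K'$, so by Theorem~\ref{Thm:GirouxStab}(2) the contact manifold $\Mxi$ is overtwisted.

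\textbf{Main obstacle.} The delicate point is the arrangement in the local analysis: using the flexibility in Algorithm~2 to force $\mu_K$ to bound a \emph{single} elementary disk whose associated factor $D^+_{\mu_K}$ can be isolated at one end of the Dehn twist factorization of $\Phi_{S^3}$. This requires both checking that the partial order $\disk_j>\disk_i\Rightarrow j>i$ of Theorem~\ref{Thm:Mono} admits a linear extension making $\disk_{\mu_K}$ minimal, and verifying that the ribbon structure near the standard meridian genuinely produces the band $h$ with all ambient twists supported away from it except $D^-_K$ and the already-cancelled $D^-_{\mu_K}$.
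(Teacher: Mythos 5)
Your overall strategy (run Algorithm~2, identify a $1$-handle whose co-core is crossed only by $K$, cancel the twists coming from $\mu_K$, and invoke Theorem~\ref{Thm:GirouxStab}(2)) is the right one, but the step you yourself flag as the main obstacle is a genuine gap, and the resolution you propose is not available. You want $\mu_K$ to bound a \emph{single} elementary disk $\disk_{\mu_K}$ ``whose interior meets $K$ transversely once,'' so that Theorem~\ref{Thm:Mono} hands you a factor $D^{+}_{\mu_K}$ to cancel against the surgery twist $D^{-}_{\mu_K}$. This is impossible on two counts. First, elementary disks are $2$-cells of a contact cell decomposition whose $1$-skeleton contains $K$: by definition their interiors are disjoint from $\bar{L}$, so no elementary disk can meet $K$ in its interior. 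Second, since $\mu_K$ is a meridian, $D(\mu_K)$ crosses $D(K)$ in (at least) two points, and Step~1 of Algorithm~2 is forced, not optional: those crossings are completed to vertices, and the strand of $D(K)$ running through the oval subdivides the region bounded by $D(\mu_K)$ into at least two small disks $\disk_1,\disk_2$. Theorem~\ref{Thm:Mono} therefore contributes $D^{+}_{\partial\disk_1}$ and $D^{+}_{\partial\disk_2}$, whose product is not $D^{+}_{\mu_K}$, and the cancellation your argument hinges on never happens. Relatedly, the ``portion of the ribbon along $\mu_K$'' is an annulus plumbed to the ribbon of $K$ at two points, not a $1$-handle with $\mu_K$ as core, so the identification of the stabilizing handle $h$ is also not set up correctly.

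The paper's proof sidesteps exactly this by \emph{not} putting $\mu_K$ into the Legendrian graph. Instead, one isotops $L\setminus\mu_K$ so that $z$ attains its minimum on $K$, adjoins a short horizontal arc to the sub-arc $(z|_K)^{-1}[0,\epsilon]$ to form a $tb=-1$ unknot $\gamma$, and builds the cell decomposition from $(L\setminus\mu_K)\cup\gamma$. Now $\gamma$ genuinely is an elementary cycle, so $D^{+}_{\gamma}$ is a factor of the monodromy of $\Sthree$, and $\mu_K$ is Legendrian isotopic to a page-pushoff of $\gamma$, so the $+1$ surgery on $\mu_K$ contributes $D^{-}_{\gamma}$; these cancel after a conjugation. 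The stabilizing handle is the ribbon $H$ of the sub-arc of $K$ (not anything attached to $\mu_K$): its co-core is crossed once by $K$ and by nothing else among the remaining twist curves, so what is left is a negative stabilization with stabilizing curve $K$. If you want to salvage your write-up, replace the arrangement ``$\mu_K$ bounds one elementary disk'' by this construction of $\gamma$ and the handle $H$ along $K$; as written, the proof does not go through.
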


\begin{proof}
Let $\mu_{K}$ be a standard meridian of $K$.  After possibly applying a series of Legendrian Reidemeister moves and isotopies, we may assume that the height function $z$ on $L\setminus\mu_{K}$ (considered as living in $\Rthree$) takes on its absolute minimum at some point on $K$.  We may also work under the assumption that $\mu_{K}$ is contained in a small ball near this point as depicted in Figure \ref{Fig:Stabs}.

Without loss of generality, the point on $K$ for which $z$ achieves its absolute minimum has $z$-value equal a zero.  Draw a straight, horizontal line connecting the two points on $K$ which take on the $z$-value $\epsilon > 0$.  Note that for $\epsilon$ sufficiently small, these two points are uniquely determined, and that such an arc will not cross any other components of $L\setminus\mu$ in the diagram.  Isotop this arc, while fixing its endpoints so that its union with $(z|_{K})^{-1}[0,\epsilon]$ is a Legendrian unknot $\gamma$ with $tb=-1$, and still does not cross any other component of $L\setminus\mu$.  Call this arc $\gamma$.

Follow the proof of Theorem \ref{Thm:AlgLink} to complete $(L\setminus\mu_{K})\cup\gamma$ to a contact cell decomposition of $\Sthree$.  In taking the ribbon of the 1-skeleton of this cell decomposition, we have the page, $\Sigma$ of an open book for $\Sthree$ which contains $L\setminus\mu_{K}$.  We are interested especially in the 1-handle of this surface which is the ribbon of $(z|_{K})^{-1}[0,\epsilon]$.  Denote this 1-handle by $H$.  By Theorem \ref{Thm:Mono} the monodromy of this open book may be written
\begin{equation*}
\Phi = D^{+}_{\gamma} \circ \prod D^{+}_{\zeta_{j}}
\end{equation*}
where each of the $\zeta_{j}$ are curves which do not intersect the co-core of $H$.  Note also by our construction that $K$ is the only component of $L\setminus\mu_{K}$ which intersects the co-core of $H$.

Write $\Sigma_{t}$ and $\zeta_{t}$ for the image of $\Sigma$ and an arc $\zeta$ contained in it under the time-$t$ flow of $\partial_{z}$.  Note that for $\delta >0$ sufficiently small we may realize $\mu$ as $\gamma_{-\delta}$.  Then the monodromy, $\Phi_{L}$ of the open book obtained by performing contact Dehn surgery on $L$ may be written
\begin{equation*}
D^{+}_{\gamma} \circ \prod D^{+}_{\zeta_{j}} \circ \prod_{L\setminus(\mu\cup K)} D^{\delta_{j}}_{\eta_{j}} \circ D^{-}_{K} \circ D^{-}_{\gamma}
\end{equation*}
as $\mu_{K}$ projects to $\gamma$ in $\Sigma$.  The open book is then the same as is given by the mapping class
\begin{equation*}
\prod D^{+}_{\zeta_{j}} \circ \prod_{L\setminus(\mu\cup K)} D^{\delta_{j}}_{\eta_{j}} \circ D^{-}_{K}
\end{equation*}
as the two are conjugate.  Then by the properties established concerning the intersections of the $\zeta_{j}$ and $\eta_{j}$ with $H$, we have that this open book decomposition is a negative stabilization of the open book whose page is $\Sigma\setminus H$ and whose monodromy is
\begin{equation*}
\prod D^{+}_{\zeta_{j}} \circ \prod_{L\setminus(\mu\cup K)} D^{\delta_{j}}_{\eta_{j}}.
\end{equation*}
This concludes the proof.
\end{proof}

\begin{proof}[Proof of Theorem \ref{Thm:Destab}]
It only remains to prove part (a).  This can be seen by following the proof of the previous lemma word-for-word with the exception that now $K\subset L^{-}$.  Construct an open book decomposition for $L$ as instructed.  Eliminating the twists about $K$ and $\mu$ provides a new open book decomposition whose monodromy is
\begin{equation*}
D^{+}_{\gamma} \circ \prod D^{+}_{\zeta_{j}} \circ \prod_{L\setminus(\mu\cup K)} D^{\delta_{j}}_{\eta_{j}}
\end{equation*}
which is conjugate to
\begin{equation*}
\prod D^{+}_{\zeta_{j}} \circ \prod_{L\setminus{\mu\cup K}} D^{\delta_{j}}_{\eta_{j}} \circ D^{+}_{\gamma}.
\end{equation*}
Then deleting $D^{+}_{\gamma}$ from the monodromy and $H$ from the page amounts to a positive destabilization which does not alter the contact manifold.  By construction, this is equivalent to removing $K$ and $\mu$ from the surgery diagram as we may build the same open book for $L\setminus (K\cup\mu)$ by taking the page to be the same minus the handle $H$.
\end{proof}



\section{Mapping class relations as Kirby moves}\label{Sec:Kirby}

In this section we introduce a method of modifying contact surgery diagrams by constructing Kirby moves  associated to mapping class relations.  We call these moves \emph{ribbon moves}, which were briefly described in the introduction.  After defining these operations, we give examples of ribbon moves analogous to the conjugacy, braid, and chain relations between Dehn twists on a surface.  An example of a lantern relation type ribbon move was given in Figure \ref{Fig:Lantern}.  In Section \ref{Sec:KirbyProof} we show that any two contact surgery diagrams for the same contact manifold are related by a sequence of Legendrian isotopies and ribbon moves.  This may be thought of as Theorem \ref{Thm:GirCor}(1) interpreted in the language of contact surgery by Theorem \ref{Thm:TwistSurgery}.

The following conventions will be used throughout the remainder of the paper:
\be
\item  $R_{G}$ will denote the ribbon of a Legendrian graph $G\subset\Sthree$.
\item  We assume that $R_{G}\times[-1,1]$ is embedded in $\Sthree$ in such a way that a coordinate on $[-1,1]$ coincides with the function $z$ on $\Rthree$ up to multiplication by a positive number.
\item  If $\gamma\subset R_{G}$ is a Legendrian realizable, simple, closed curve, then we write $\gamma(t)$ for the Legendrian realization of $\gamma\times\{t\}\subset R_{G}\times\{t\}$ for $t\in[-1,1]$.
\item  When $L\subset\Sthree$ is a Legendrian link and we write $L\subset R_{G}\times[-1,1]$, it will be assumed that each connected component $L_{j}$ of $L$ is contained in $R_{G}\times\{t_{j}\}$ for some $t_{j}\in[-1,1]$.  We also assume that the $L_{j}$ are indexed so that $j>i$ implies $t_{j}>t_{i}$.
\item  $MCG(R_{G},\partial R_{G})$ will refer to the \emph{mapping class group} of $R_{G}$, i.e. the group of orientation preserving diffeomorphisms of $R_{G}$ which restrict to the identity on a neighborhood of $\partial R_{G}$, considered up to isotopy.
\ee

\subsection{Ribbon equivalence and ribbon moves}

In this section we give a more careful definition of the ribbon moves which were described in the introduction.  We begin with some definitions which will help to translate surgery data into mapping class data and vice versa.

\begin{defn}
Suppose that $R_{G}$ is the ribbon of a Legendrian graph $G\subset\Sthree$ and that
\begin{equation*}
L=\bigcup^{n}_{1} L_{j}^{\delta_{j}}\subset R_{G}\times[-1,1],\quad \delta_{j}\in\{+1,-1\}
\end{equation*}
is a surgery link.  Then each $L_{j}$ may be considered as a simple closed curve in $R_{G}$ by projecting $R_{G}\times[-1,1]$ to $R_{G}$.  The \emph{mapping class determined by $L$} is
\begin{equation*}
D_{L}:=D_{L_{n}}^{-\delta_{n}}\circ\cdots\circ D_{L_{1}}^{-\delta_{1}}\in MCG(R_{G},\partial R_{G}).
\end{equation*}
\end{defn}

\begin{defn}
Suppose that $R_{G}$ is the ribbon of a Legendrian graph in $\Sthree$ and that $L,L'\subset R_{G}\times[-1,1]$ are two surgery links.  We say that $L$ and $L'$ are \emph{$R_{G}$ equivalent} if $D_{L}=D_{L'}$ in $MCG(R_{G},\partial R_{G})$.  Surgery links $L$ and $L'$ in $\Sthree$ are \emph{ribbon equivalent} if there exists a Legendrian graph $G\subset\Sthree$ for which $L$ and $L'$ are $R_{G}$ equivalent.
\end{defn}

\begin{prop}\label{Prop:RibbonEquiv}
If $L$ and $L'$ are ribbon equivalent, then they determine the same contact 3-manifold.
\end{prop}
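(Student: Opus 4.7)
My plan is to exhibit contact surgery on $L$ and on $L'$ as producing contact 3-manifolds supported by identical open books, and then invoke Theorem \ref{Thm:GirCor}. The central observation is that the mapping class $D_L$ records precisely the modification to the monodromy induced by the surgery, so the equality $D_L = D_{L'}$ in $MCG(R_G, \partial R_G)$ will translate directly into equality of supporting open books.

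First I would adjoin Legendrian arcs to $G$, via the extension of Algorithm 2 to Legendrian graphs, to form a larger Legendrian graph $G^{*} \supset G$ whose ribbon $\Sigma := R_{G^{*}} \supset R_G$ is the page of an open book $(\Sigma, \Phi_0)$ supporting $\Sthree$. After a small isotopy one may arrange that the collar $R_G \times [-1,1]$ embeds in the handlebody of this open book in such a way that each slice $R_G \times \{t_j\}$ lies in a distinct page corresponding to some angular coordinate $\theta_j$, with $\theta_1 < \dots < \theta_n$. By the Legendrian realization principle of Section \ref{Sec:LegRel}, each component $L_j$ may be taken to be a Legendrian curve on its page. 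Iterated application of Theorem \ref{Thm:TwistSurgery}, tracking the composition order carefully, then shows that the contact manifold resulting from surgery on $L$ is supported by
\begin{equation*}
(\Sigma,\; \Phi_0 \circ D_{L_n}^{-\delta_n} \circ \dots \circ D_{L_1}^{-\delta_1}) = (\Sigma,\; \Phi_0 \circ D_L),
\end{equation*}
where $D_L$ is viewed as an element of $MCG(\Sigma, \partial \Sigma)$ by extension by the identity on $\Sigma \setminus R_G^{\circ}$.

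The same construction applied to $L'$ yields the supporting open book $(\Sigma, \Phi_0 \circ D_{L'})$. Because the inclusion $R_G \hookrightarrow \Sigma$ induces a well-defined homomorphism $MCG(R_G, \partial R_G) \to MCG(\Sigma, \partial \Sigma)$ (extend by identity), the hypothesis $D_L = D_{L'}$ upstairs passes to equality in $MCG(\Sigma, \partial \Sigma)$, so the two monodromies agree. Hence the two surgered contact 3-manifolds admit the same supporting open book and coincide by Theorem \ref{Thm:GirCor}. I expect the main obstacle to be pinning down the compositional order in the iterated application of Theorem \ref{Thm:TwistSurgery}: the height convention $t_1 < \dots < t_n$ must correspond to the angular ordering of the open book in a way that accumulates the Dehn twists as the outer-to-inner composition $D_{L_n}^{-\delta_n} \circ \cdots \circ D_{L_1}^{-\delta_1}$ exactly matching the definition of $D_L$. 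A secondary technicality is to ensure that the Legendrian realization of each $L_j$ onto a page is achieved by a $C^{\infty}$-small isotopy that preserves the contact framing, so that the surgery coefficient $\delta_j$ remains unchanged.
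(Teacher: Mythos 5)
Your proposal is correct and follows essentially the same route as the paper: extend $G$ to a larger Legendrian graph whose ribbon is the page of an open book supporting $\Sthree$, apply Theorem \ref{Thm:TwistSurgery} to exhibit both surgered manifolds as supported by that page with monodromy $\Phi_0\circ D_L$ and $\Phi_0\circ D_{L'}$ respectively, and conclude from $D_L=D_{L'}$. Your additional care about the height-to-angular-ordering correspondence and the extension-by-identity homomorphism $MCG(R_G,\partial R_G)\to MCG(\Sigma,\partial\Sigma)$ only makes explicit what the paper leaves implicit.
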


\begin{proof}
Suppose that $L$ and $L'$ are $R_{G}$ equivalent for some Legendrian graph $G\subset\Sthree$.  By adjoining Legendrian arcs to $G$, we can find a Legendrian graph $\bar{G}$ containing $G$ whose ribbon $R_{\bar{G}}$ is the page of an open book $(R_{\bar{G}},\Phi_{\bar{G}})$ supporting $\Sthree$.  Then $R_{G}$ embeds into $R_{\bar{G}}$ and the mapping classes associated to $L$ and $L'$ extend to mapping classes of $R_{\bar{G}}$ in the obvious way.

The contact manifolds determined by $L$ and $L'$ are then supported by the open books
\begin{equation*}
(R_{\bar{G}},\Phi_{\bar{G}}\circ D_{L})\quad\text{and}\quad (R_{\bar{G}},\Phi_{\bar{G}}\circ D_{L'}),
\end{equation*}
respectively.  Therefore $L$ and $L'$ determine the same contact manifold.
\end{proof}

\begin{defn}
Let $L$ be a contact surgery link in $\Sthree$ with surgery sublink $\ell$.  A \emph{ribbon move} performed on $\ell$ consists of finding a Legendrian graph $G$ such that $\ell\subset R_{G}\times[-1,1]$ and replacing $\ell$ with another surgery link $\ell'\subset R_{G}\times[-1,1]$ for which $\ell$ and $\ell'$ are $R_{G}$ equivalent.  When $G$ is specified, we will call such a move an \emph{$R_{G}$ move}.
\end{defn}

Note that by Proposition \ref{Prop:RibbonEquiv}, performing a ribbon move does not change that contact manifold determined by surgery.

The simplest types of ribbon moves are \emph{insertions and deletions of canceling pairs}, first observed in \cite{DG:Surgery}.  Insertion of a canceling pair consists of adding a Legendrian knot $K$ with surgery coefficient $\pm 1$ to a surgery diagram, together with a Reeb pushoff of $K$ decorated with surgery coefficient $\mp 1$.  A canceling pair determines the mapping class $D^{\mp}_{K}\circ D^{\pm}_{K}=Id_{R_{K}}$ of the ribbon $R_{K}$.  Therefore a canceling pair is ribbon equivalent to the empty surgery link.

Ribbon moves performed along a ribbon $R_{G}$ may be alternatively characterized as the insertion of a surgery link $L\subset R_{G}\times[-1,1]$ into a surgery diagram satisfying $D_{L}=Id_{R_{G}}$, followed by deletions of canceling pairs.

\subsection{Examples of ribbon moves}\label{Sec:KirbyEx}

In this section we give examples of ribbon moves.  We begin by describing a method which allows us to consider distinct Legendrian knots in $\Sthree$ as sharing a transverse intersection on the ribbon of some Legendrian graph.  This will be useful in providing examples of ribbon moves, as mapping class relations often relate Dehn twists along curves which intersect nontrivially.

\subsubsection{Plumbing ribbons of Legendrian knots}

Consider two Legendrian knots $A$ and $B$ in $\Sthree$ whose front projections are in one of the two configurations shown in Figure \ref{Fig:KnotConfig}.

\begin{figure}[h]
	\begin{overpic}[scale=.7]{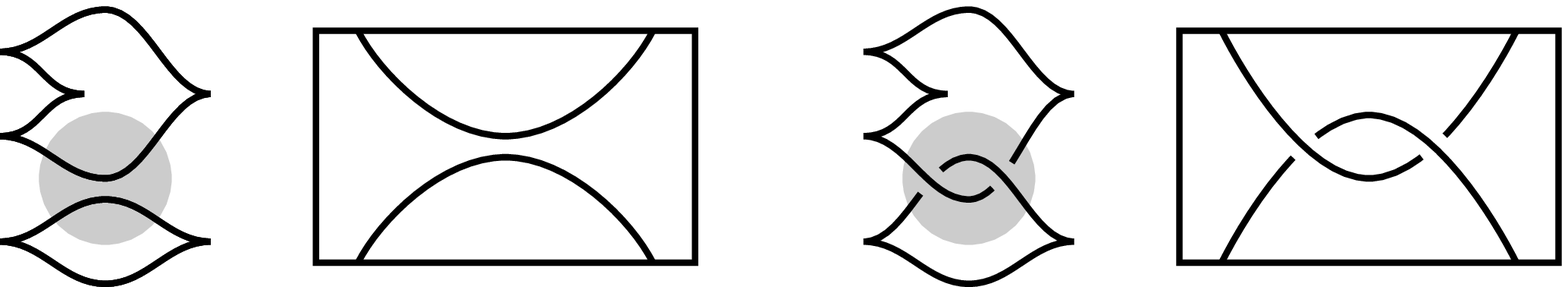}
        \put(-3,9){$A$}
        \put(-3,2){$B$}
        \put(11.5,6){\vector(1,0){7}}
        \put(10,-3){Configuration 1}
        \put(52.5,9){$A$}
        \put(52.5,2){$B$}
        \put(67.5,6){\vector(1,0){7}}
        \put(66,-3){Configuration 2}
    \end{overpic}
    \vspace{3mm}
	\caption{The boxes show the front projection of the knots $A$ and $B$ ``zoomed in'' at the gray disks.}
    \label{Fig:KnotConfig}
\end{figure}

By pinching $A$ and $B$ together along a chord of the vector field $\partial_{z}$ we obtain a Legendrian graph $A\vee B$ which is homeomorphic to a wedge of two circles.  Then the ribbon $R_{A\vee B}$ of $A\vee B$ is a plumbing of $R_{A}$ of $R_{B}$ and is diffeomorphic to a punctured torus.  See Figure \ref{Fig:Pinching}.  We have $A\subset R_{A\vee B}\times[-1,1]$ and $B\subset R_{A\vee B}\times[-1,1]$.

\begin{figure}[h]
	\begin{overpic}[scale=.7]{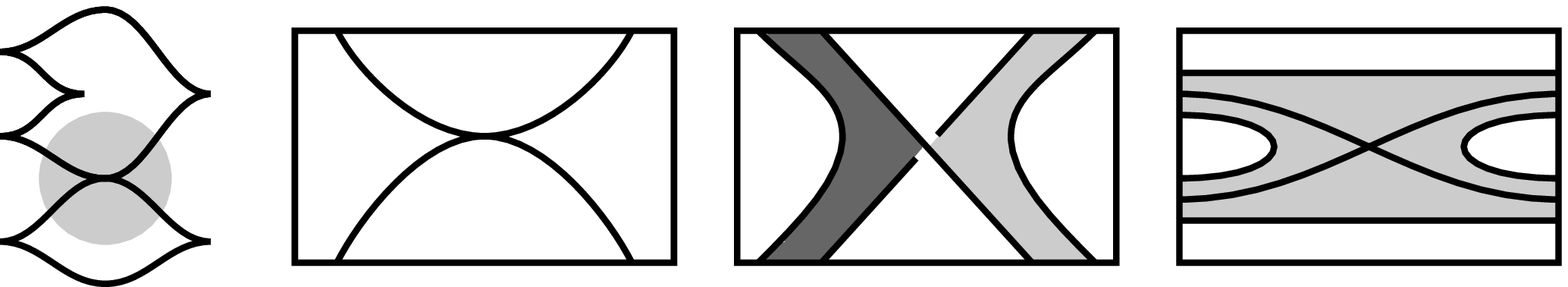}
        \put(3.5,-3){$A\vee B$}
        \put(11.5,6){\vector(1,0){7}}
        \put(47,-2){$R_{A\vee B}$: Front projection}
        \put(76,-2){$R_{A\vee B}$: $(x,y)$ projection}
        \put(100.5,10.5){$A$}
        \put(100.5,5){$B$}
    \end{overpic}
    \vspace{3mm}
	\caption{The graph $A\vee B$ and its ribbon in the front and $(x,y)$ projections.  The right-most figure shows the curves $A$ and $B$ projected to $R_{A\vee B}$.}
    \label{Fig:Pinching}
\end{figure}

Suppose that $A$ and $B$ are decorated with surgery coefficients $\delta_{A}$ and $\delta_{B}$.  Then in the first configuration of Figure \ref{Fig:KnotConfig}, $D_{A^{\delta_{A}}\cup B^{\delta_{B}}} = D_{A}^{-\delta_{A}}\circ D_{B}^{-\delta_{B}}$.  In the second configuration of Figure \ref{Fig:KnotConfig}, $D_{A^{\delta_{A}}\cup B^{\delta_{B}}} = D_{B}^{-\delta_{B}}\circ D_{A}^{-\delta_{A}}$.

\begin{figure}[h]
	\begin{overpic}[scale=.7]{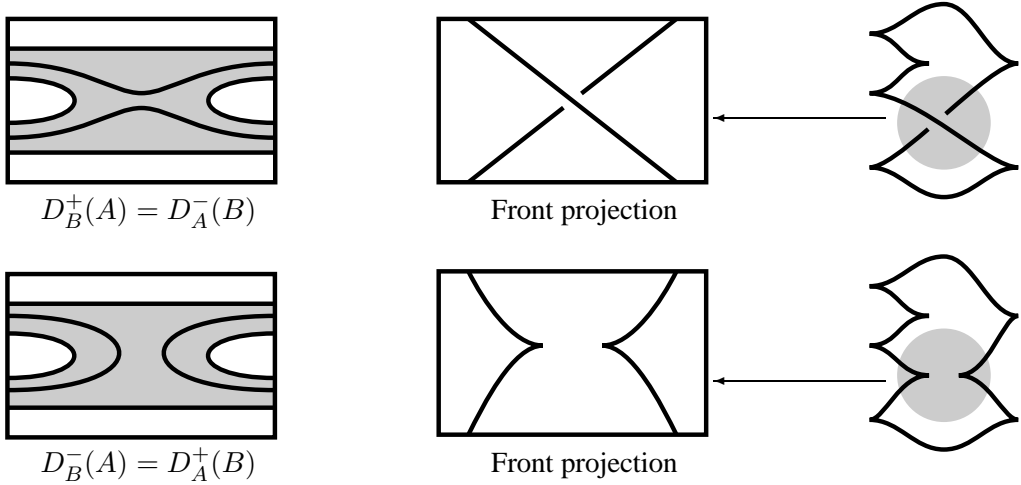}
        \put(3.5,23){$D^{+}_{B}(A)=D^{-}_{A}(B)$}
        \put(48,23){Front projection}
        \put(3.5,-2){$D^{-}_{B}(A)=D^{+}_{A}(B)$}
        \put(48,-2){Front projection}
        \put(87,33){\vector(-1,0){17}}
        \put(87,7){\vector(-1,0){17}}
    \end{overpic}
    \vspace{1.5mm}
	\caption{The left-most column shows the curves $D^{+}_{B}(A)=D^{-}_{A}(B)$ and $D^{-}_{B}(A)=D^{+}_{A}(B)$ embedded into $R_{A\vee B}$ in the $(x,y)$ projection.  The remaining columns show the Legendrian realizations of these curves in the front projection.}
    \label{Fig:Realize}
\end{figure}

The curves $D^{+}_{B}(A)=D^{-}_{A}(B)$ and $D^{-}_{B}(A)=D^{+}_{A}(B)$ can also be Legendrian realized in $R_{A\vee B}\times[-1,1]$ as in Figure \ref{Fig:Realize}.

\subsubsection{Handle slides as ribbon moves}\label{Sec:HandleSlide}

Using the local pictures above, we can interpret the handle slides of \cite{DG:Handles} as ribbon moves.  Suppose that $\alpha$ and $\beta$ are two simple closed curves on an oriented surface $\Sigma$ which share a single transverse intersection.  The \emph{conjugacy relation} tells us that
\begin{equation*}
D^{+}_{\alpha}\circ D^{\pm}_{\beta} = D^{\pm}_{D^{+}_{\alpha}(\beta)}\circ D^{+}_{\alpha}.
\end{equation*}

Using the curves shown in Figure \ref{Fig:Realize} we can associate a ribbon move to the conjugacy relation and its variants.  Examples are given in Figure \ref{Fig:HandleSlide}.

\begin{figure}[h]
	\begin{overpic}[scale=.7]{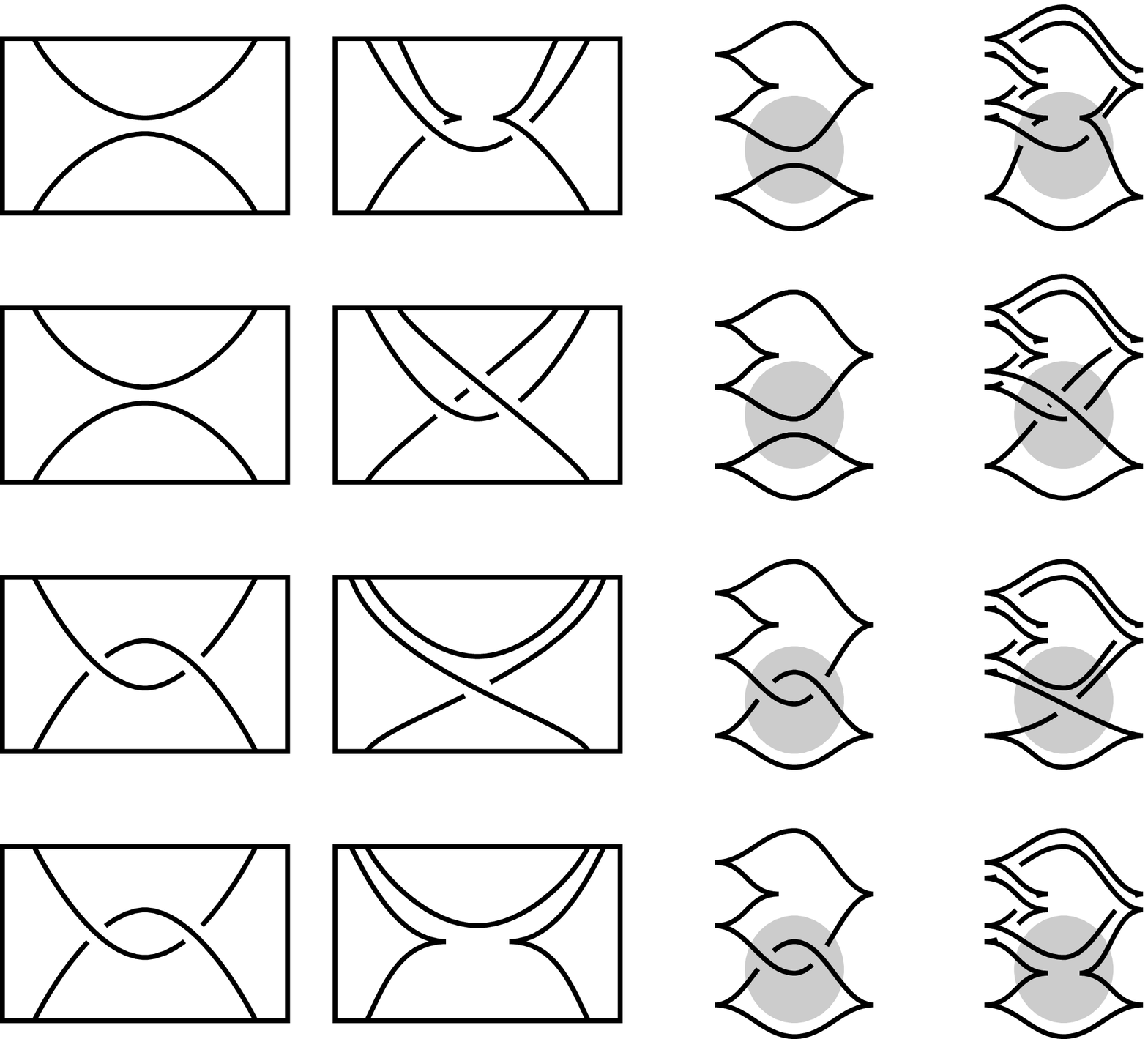}
            \put(7,69){$D^{+}_{A}\circ D^{\pm}_{B}$}
        \put(34,69){$D^{\pm}_{D^{+}_{A}(B)}\circ D^{+}_{A}$}
        \put(11,85){$A^{-1}$}
        \put(58.5,85){$A^{-1}$}
        \put(10,86){\vector(-1,0){5}}
        \put(11,73){$B^{\mp 1}$}
        \put(58.5,73){$B^{\mp 1}$}
        \put(100.5,73){$(D^{+}_{A}(B))^{\mp 1}$}
        \put(10,74){\vector(-1,0){5}}
            \put(7,45.5){$D^{-}_{A}\circ D^{\pm}_{B}$}
        \put(34,45.5){$D^{\pm}_{D^{-}_{A}(B)}\circ D^{-}_{A}$}
        \put(11,61.5){$A^{+1}$}
        \put(58.5,61.5){$A^{+1}$}
        \put(10,50.5){\vector(-1,0){5}}
        \put(11,49.5){$B^{\mp 1}$}
        \put(58.5,49.5){$B^{\mp 1}$}
        \put(100.5,49.5){$(D^{-}_{A}(B))^{\mp 1}$}
        \put(10,62.5){\vector(-1,0){5}}
            \put(7,22){$D^{\pm}_{B}\circ D^{+}_{A}$}
        \put(34,22){$D^{+}_{A}\circ D^{\pm}_{D^{+}_{A}(B)}$}
        \put(11,37.5){$A^{-1}$}
        \put(58.5,37.5){$A^{-1}$}
        \put(10,38.5){\vector(-1,0){5}}
        \put(11,26){$B^{\mp 1}$}
        \put(58.5,26){$B^{\mp 1}$}
        \put(100.5,26){$(D^{+}_{A}(B))^{\mp 1}$}
        \put(10,27){\vector(-1,0){5}}
            \put(7,-1){$D^{\pm}_{B}\circ D^{-}_{A}$}
        \put(34,-1){$D^{-}_{A}\circ D^{\pm}_{D^{-}_{A}(B)}$}
        \put(11,14){$A^{+1}$}
        \put(58.5,14){$A^{+1}$}
        \put(10,15){\vector(-1,0){5}}
        \put(11,2){$B^{\mp 1}$}
        \put(58.5,2){$B^{\mp 1}$}
        \put(100.5,2){$(D^{-}_{A}(B))^{\mp 1}$}
        \put(10,3){\vector(-1,0){5}}
    \end{overpic}
	\caption{Each row gives an example of handle slide type ribbon move.  In each example the surgery curve $B$ is slid over the curve $A$.  The two boxes in each row show ribbon equivalent surgery curves inside of a Darboux ball as in Figures \ref{Fig:KnotConfig} and \ref{Fig:Realize}.  Below each box is the mapping class of $R_{A\vee B}$ associated to the surgery link.  The two surgery diagrams in each row show how the surgery curve $B$ is modified by the ribbon move.}
    \label{Fig:HandleSlide}
\end{figure}

\subsubsection{A braid relation for surgery links}

Again suppose that $\alpha$ and $\beta$ are simple, closed curves on an oriented surface $\Sigma$ sharing a single transverse intersection.  The \emph{braid relation} is the equation
\begin{equation*}
D^{\pm}_{\alpha}\circ D^{\pm}_{\beta}\circ D^{\pm}_{\alpha} = D^{\pm}_{\beta}\circ D^{\pm}_{\alpha} \circ D^{\pm}_{\beta}.
\end{equation*}

For two Legendrian knots $A$ and $B$ which share a chord as in Figure \ref{Fig:KnotConfig}, we can use the surface $R_{A\vee B}$ to define a ribbon move associated to the braid relation.  The braid relation, stated in terms of contact surgery, says that the surgery links
\begin{equation*}
A^{\mp 1}(-1)\cup B^{\mp 1}(0) \cup A^{\mp 1}(+1)\subset R_{A\vee B}\times [-1,1]\quad\text{and}\quad B^{\mp 1}(-1)\cup A^{\mp 1}(0) \cup B^{\mp 1}(+1)\subset R_{A\vee B}\times [-1,1]
\end{equation*}
are $R_{A\vee B}$ equivalent.  An example is given in Figure \ref{Fig:Braid}.

\begin{figure}[h]
	\begin{overpic}[scale=.7]{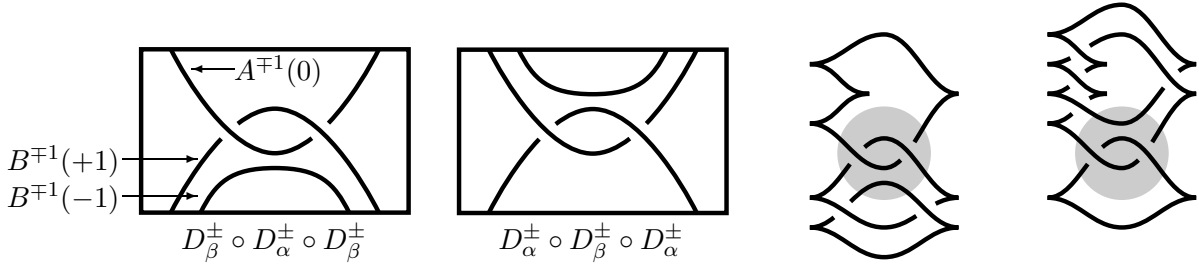}
        \put(4,1){$D^{\pm}_{\beta}\circ D^{\pm}_{\alpha} \circ D^{\pm}_{\beta}$}
        \put(34,1){$D^{\pm}_{\alpha}\circ D^{\pm}_{\beta}\circ D^{\pm}_{\alpha}$}
        \put(-12.5,5){$B^{\mp 1}(-1)$}
        \put(-1.5,6){\vector(1,0){7}}
        \put(-12.5,8.5){$B^{\mp 1}(+1)$}
        \put(-1.5,9.5){\vector(1,0){7}}
        \put(9,17){$A^{\mp 1}(0)$}
        \put(9,18){\vector(-1,0){4}}
    \end{overpic}
    \vspace{1.5mm}
	\caption{The left-most box shows the surgery link $B^{\mp 1}(-1)\cup A^{\mp 1}(0) \cup B^{\mp 1}(+1)\subset R_{A\vee B}\times [-1,1]$.  The center-left box shows the surgery link $A^{\mp 1}(-1)\cup B^{\mp 1}(0) \cup A^{\mp 1}(+1)\subset R_{A\vee B}\times [-1,1]$.  Again, the mapping class of $R_{A\vee B}$ associated to the surgery links are shown below each box.  An explicit example of the corresponding surgery diagrams are shown on the right.  For the two surgery diagrams, all surgery coefficients are the same and are either $-1$ or $+1$.}
    \label{Fig:Braid}
\end{figure}

\subsubsection{A 3-chain relation}

For our final example, we present a ribbon move associated to the 3-chain relation.

\begin{figure}[h]
	\begin{overpic}[scale=.7]{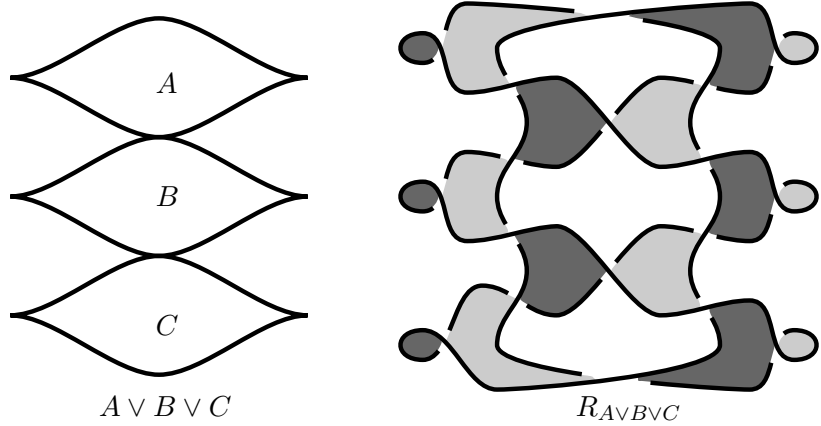}
        \put(18,37){$A$}
        \put(18,23){$B$}
        \put(18,7){$C$}
        \put(11,-3){$A\vee B\vee C$}
        \put(70,-3){$R_{A\vee B\vee C}$}
    \end{overpic}
    \vspace{3mm}
	\caption{On the left is a Legendrian graph $A\vee B\vee C$, whose cycles consist of Legendrian unknots $A,B$ and $C$, each of which has Thurston-Bennequin number $-1$.  On the right is the ribbon $R_{A\vee B\vee C}$ of the graph.  Note that $R_{A\vee B\vee C}$ has the topological type of a twice punctured torus.}
    \label{Fig:2Punctured}
\end{figure}

Consider the Legendrian graph $A\vee B\vee C$ shown in Figure \ref{Fig:2Punctured}.  The ribbon $R_{A\vee B\vee C}$ of the graph has the topological type of a twice punctured torus.  Then $A,B$ and $C$ may be considered as representing simple, closed curves in $R_{A\vee B\vee C}$.  Let $X$ and $Y$ denote curves in $R_{A\vee B\vee C}$ which are isotopic to the boundary components of $R_{A\vee B\vee C}$.  The \emph{3-chain relation} is the mapping class relation
\begin{equation*}
(D^{+}_{C}\circ D^{+}_{B}\circ D^{+}_{A})^{4} = D^{+}_{X}\circ D^{+}_{Y}.
\end{equation*}

\begin{figure}[h]
	\begin{overpic}[scale=.7]{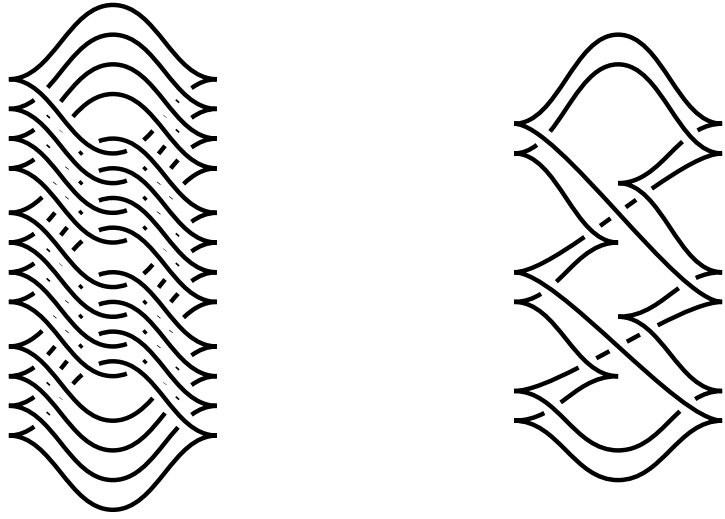}
    \end{overpic}
    \vspace{1.5mm}
	\caption{On the left is the surgery link L.  On the right is the surgery link $L'$.  In this picture all surgery coefficients are $-1$.}
    \label{Fig:Chain}
\end{figure}

We can associate surgery links to each sides of the above equation using the curves $A,B,C$ and Legendrian representatives of $X$ and $Y$.  Consider the surgery link
\begin{equation*}
L=\bigcup_{j=0}^{3}(C^{-1}(1-\frac{j}{2})\cup B^{-1}(1-\frac{j}{2}-\frac{1}{6}) \cup A^{-1}(1-\frac{j}{2}-\frac{1}{3})).
\end{equation*}
Then the mapping class of $R_{A\vee B\vee C}$ associated to $L$ is
\begin{equation*}
D_{L}=(D^{+}_{C}\circ D^{+}_{B}\circ D^{+}_{A})^{4}.
\end{equation*}
Consider also the surgery link
\begin{equation*}
L'=X^{-1}(1)\cup Y^{-1}(-1)\quad\text{with}\quad D_{L'}=D^{+}_{X}\circ D^{+}_{Y}.
\end{equation*}
Then by the 3-chain relation, $L\subset R_{A\vee B\vee C}$ and $L'\subset R_{A\vee B\vee C}$ are $R_{A\vee B\vee C}$ equivalent.  Front projection diagrams of the links $L$ and $L'$ are shown in Figure \ref{Fig:Chain}.

\subsection{The proof of Theorem \ref{Thm:Kirby}}\label{Sec:KirbyProof}

In this section we prove Theorem \ref{Thm:Kirby}.  Throughout $\Mxi$ will be a fixed contact manifold.  $X=X^{+}\cup X^{-}$ and $Y=Y^{+}\cup Y^{-}$ will denote surgery links in $\Sthree$, each of which determines $\Mxi$.

\begin{proof}[Proof of Theorem \ref{Thm:Kirby}]

As stated in the introduction, we would like to use Theorem \ref{Thm:TwistSurgery} to interpret this result as Theorem \ref{Thm:GirCor} for mapping classes with specified Dehn twist factorizations.  With this motivation, the proof of Theorem \ref{Thm:Kirby} will be broken up into the following steps:
\be
\item Describe positive stabilization and monodromy conjugation operations for surgery diagrams.  Show that these operations can be recovered by Legendrian isotopies and ribbon moves.
\item Stabilize $X$ and $Y$ as in (1) to obtain surgery links $\widetilde{X}$ and $\widetilde{Y}$ such that there is an open book $(\widetilde{\Sigma},\Phi_{\widetilde{\Sigma}})$ supporting $\Sthree$, constructed via Algorithm 2 such that
    \be
    \item $\widetilde{X}$ and $\widetilde{Y}$ are contained in a neighborhood $\widetilde{\Sigma}\times[-1,1]$ of a page of $(\widetilde{\Sigma}, \Phi_{\widetilde{\Sigma}})$, and
    \item $\Phi_{\widetilde{\Sigma}}\circ D_{\widetilde{X}}$ is conjugate to $\Phi_{\widetilde{\Sigma}}\circ D_{\widetilde{Y}}$ by some diffeomorphism $\Psi\in MCG(\widetilde{\Sigma},\partial \widetilde{\Sigma})$.
    \ee
\item Describe $\Psi$ in terms of surgery as in (1).  This will relate $\widetilde{X}$ and $\widetilde{Y}$ by ribbon moves and Legendrian isotopies, and so will complete the proof.
\ee

\noindent\textbf{Step 1.} The positive stabilization operation for surgery diagrams was described in the proof of Theorem \ref{Thm:Destab}(1).  It is easy to see that the insertion of a positive stabilization can be performed by inserting a canceling pair, a Legendrian isotopy, and a handle slide as in Section \ref{Sec:HandleSlide}.  See Figure \ref{Fig:StableSlide}.  Now we must describe an operation for surgery diagrams analogous to conjugating the monodromy of an open book.  Throughout, $\epsilon$ will be an arbitrarily small positive constant.

\begin{figure}[h]
	\begin{overpic}[scale=.7]{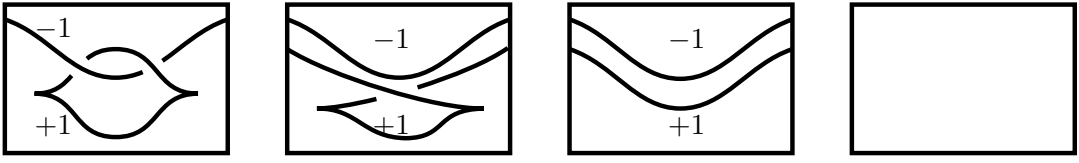}
        \put(3,2){$+1$}
        \put(3, 11){$-1$}
        \put(34.5,10){$-1$}
        \put(34.5,2){$+1$}
        \put(62,10){$-1$}
        \put(62,2){$+1$}
    \end{overpic}
    \vspace{1.5mm}
	\caption{Deleting a positive stabilization, as described in Theorem \ref{Thm:Destab}, can be performed by a sequence of ribbon moves and Legendrian isotopies.  From left to right we perform a handle slide (see Section \ref{Sec:HandleSlide}), then a Legendrian isotopy, and finally a deletion of a canceling pair.  The boxes indicate that we are working in a Darboux ball.}
    \label{Fig:StableSlide}
\end{figure}

\begin{defn}\label{Def:Conjugate}
Suppose that $\Sigma$ is the page of an open book $\AOB$ of $\Sthree$ and $L\subset\Sigma\times[-1+\epsilon,1-\epsilon]\subset\Sigma\times[-1,1]$ is a surgery link.  Let $K\subset\Sigma$ be a Legendrian realizable, simple, closed curve in $\Sigma$, and let $\delta\in\{
+1,-1\}$.  Define the surgery link $L_{(\Sigma,K^{\delta})}$ by
\begin{equation*}
L_{(\Sigma,K^{\delta})} = (\Phi^{-1}(K))^{\delta}(1)\;\cup\; L\;\cup\; K^{-\delta}(-1).
\end{equation*}
\end{defn}

\begin{lemma}\label{Lemma:Conjugate}
In the notation of the above definition, if $L\subset\Sthree$ presents $\Mxi$, then so does $L_{(\Sigma,K^{\delta})}$.
\end{lemma}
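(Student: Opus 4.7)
The plan is to compute the monodromy of the open book that supports the contact manifold presented by $L_{(\Sigma,K^{\delta})}$, and show that it is a conjugate of the monodromy that supports the contact manifold presented by $L$. Since conjugate monodromies give contactomorphic abstract open books, this will establish the lemma.

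First I would invoke Theorem~\ref{Thm:TwistSurgery} together with the ordering convention from Section~\ref{Sec:Kirby} (components at greater height correspond to later Dehn twists in the composition). The three pieces of $L_{(\Sigma,K^{\delta})}$ sit at heights $-1$, in $[-1+\epsilon,1-\epsilon]$, and $+1$ respectively, so
\begin{equation*}
D_{L_{(\Sigma,K^{\delta})}} \;=\; D_{\Phi^{-1}(K)}^{-\delta}\circ D_{L}\circ D_{K}^{\delta},
\end{equation*}
and the surgered manifold is supported by the open book with page $\Sigma$ and monodromy $\Phi\circ D_{L_{(\Sigma,K^{\delta})}}$.

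Next I would apply the standard naturality identity $D_{\Phi^{-1}(K)}^{\epsilon}=\Phi^{-1}\circ D_{K}^{\epsilon}\circ\Phi$ to rewrite
\begin{equation*}
\Phi\circ D_{\Phi^{-1}(K)}^{-\delta}\circ D_{L}\circ D_{K}^{\delta} \;=\; D_{K}^{-\delta}\circ\bigl(\Phi\circ D_{L}\bigr)\circ D_{K}^{\delta}.
\end{equation*}
By hypothesis, $\Phi\circ D_{L}$ is the monodromy of the open book presenting $\Mxi$; conjugation of the monodromy by any diffeomorphism of $\Sigma$ fixing $\partial\Sigma$ produces an equivalent abstract open book, hence a contactomorphic supporting contact 3-manifold. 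Therefore $L_{(\Sigma,K^{\delta})}$ also presents $\Mxi$.

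The only subtlety I foresee is verifying that the three pieces of $L_{(\Sigma,K^{\delta})}$ can simultaneously be realized as honest Legendrian knots in $\Sthree$ with the correct framings and that the Dehn twist factorization above is literally the one Theorem~\ref{Thm:TwistSurgery} produces. This is handled by the Legendrian realization principle applied to $K$ and $\Phi^{-1}(K)$ on the appropriate copies of $\Sigma$ in the neighborhood $\Sigma\times[-1,1]$, together with the height-ordering convention set up at the start of Section~\ref{Sec:Kirby}; once this is in place, the algebra above is immediate.
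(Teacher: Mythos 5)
Your proposal is correct and follows essentially the same route as the paper: compute $D_{L_{(\Sigma,K^{\delta})}}=D^{-\delta}_{\Phi^{-1}(K)}\circ D_{L}\circ D^{\delta}_{K}$, use $\Phi\circ D^{-\delta}_{\Phi^{-1}(K)}=D^{-\delta}_{K}\circ\Phi$ to rewrite the resulting monodromy as $D^{-\delta}_{K}\circ\Phi\circ D_{L}\circ D^{\delta}_{K}$, and conclude by conjugation-invariance of the supported contact manifold. The realization and ordering points you flag are exactly the conventions already fixed at the start of Section~\ref{Sec:Kirby}, so nothing further is needed.
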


\begin{proof}
$\Mxi$ is supported by the open book $(\Sigma,\Phi\circ D_{L})$.  The contact manifold determined by $L_{(\Sigma,K^{\delta})}$ is supported by the open book with page $\Sigma$ and monodromy
\begin{equation*}
\begin{aligned}
\Phi\circ D^{-\delta}_{\Phi^{-1}(K)}\circ D(L)\circ D^{\delta}_{K} &=\Phi\circ\Phi^{-1}\circ D_{K}^{-\delta}\circ\Phi\circ D_{L}\circ D_{K}^{\delta} \\
&= D^{-\delta}_{K}\circ\Phi\circ D_{L}\circ D_{K}^{\delta}.
\end{aligned}
\end{equation*}
As these two mapping classes are conjugate, they determine the same contact manifold.  Hence, surgery on $L_{(\Sigma,K^{\delta})}$ produces $\Mxi$.
\end{proof}

Note that in the proof of the preceding lemma, it is essential that the surface $\Sigma$ used in Definition \ref{Def:Conjugate} is the page of an open book decomposition, and not just the ribbon of some Legendrian graph.

\begin{lemma}
The link $L_{(\Sigma,K^{\delta})}$ of Definition \ref{Def:Conjugate} can be obtained from $L$ by the insertion of a canceling pair and a Legendrian isotopy.
\end{lemma}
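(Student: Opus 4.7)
The plan is to insert a canceling pair consisting of two copies of $K$ near the bottom of the local neighborhood $\Sigma\times[-1,1]$ and then Legendrian-isotope one of these copies around the supporting open book to become $\Phi^{-1}(K)$ at the top. Specifically, I would first insert the canceling pair $K^{-\delta}(-1) \cup K^{\delta}(-1+\eta)$ for some arbitrarily small $\eta>0$. Since the $t$-coordinate coincides with $z$ in the front projection and $\partial_{z}$ is the Reeb vector field for $\lambda_{std}$, the knot $K^{\delta}(-1+\eta)$ is genuinely a Reeb pushoff of $K^{-\delta}(-1)$, so this is a valid canceling pair insertion in the sense of the paragraph following Proposition \ref{Prop:RibbonEquiv}.

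The essential step is then to exhibit a Legendrian isotopy in $\Sthree$ carrying $K^{\delta}(-1+\eta)$ to $(\Phi^{-1}(K))^{\delta}(+1)$ while fixing the remaining components. The geometric input is that for any contact form $\alpha$ supporting $\AOB$, the first-return map of the Reeb flow of $\alpha$ on the page $\Sigma$ agrees with $\Phi$ up to isotopy rel $\partial\Sigma$; this follows from the identification $(x,1)\sim(\Phi(x),0)$ in the definition of the open book. Consequently, flowing $K$ via this Reeb field backward for slightly less than one full period produces a Legendrian curve whose $\Sigma$-image is $\Phi^{-1}(K)$, and the whole flow is a Legendrian isotopy since the Reeb flow is a family of contactomorphisms. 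A short additional push within $\Sigma\times[-1,1]$ at the end brings the curve to lie at level $+1$, and the surgery coefficient $\delta$ is preserved throughout. After these two operations the resulting surgery link is precisely $L_{(\Sigma,K^{\delta})}$, as required.

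The main obstacle I would expect is arranging the isotopy above to remain disjoint from $L$ and from the partner knot $K^{-\delta}(-1)$; a naive backward Reeb push starting at level $-1+\eta$ immediately runs into $K^{-\delta}(-1)$ at level $-1$. I would handle this by first applying a short preliminary Legendrian isotopy that pushes $K^{\delta}(-1+\eta)$ entirely out of $\Sigma\times[-1,1]$ into the complement of the neighborhood, using the codimension-two flexibility of Legendrian isotopies in the complement of a Legendrian link. The wrap-around Reeb-flow isotopy can then be executed in the region $\Sthree\setminus(\Sigma\times[-1,1])$, which by hypothesis contains no surgery components, and a symmetric short push at the end repositions the curve onto the page $\Sigma$ at level $+1$ in the final position $(\Phi^{-1}(K))^{\delta}(+1)$.
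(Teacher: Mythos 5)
Your overall strategy is exactly the paper's: insert a canceling pair consisting of $K^{-\delta}(-1)$ and a Reeb pushoff of it, then carry the pushoff once backward around the open book so that the monodromy identification converts it into $(\Phi^{-1}(K))^{\delta}(1)$. The observation that the first-return map of an adapted Reeb flow realizes $\Phi$ is also the mechanism the paper relies on. The problem is the step you yourself flag as the main obstacle, and your resolution of it does not work as stated. Having placed the second component at level $-1+\eta$, you must move it past $K^{-\delta}(-1)$ at level $-1$ before you can exit the neighborhood $\Sigma\times[-1,1]$, and you justify this by ``codimension-two flexibility of Legendrian isotopies in the complement of a Legendrian link.'' That principle is false for knots: an isotopy of a knot in a 3-manifold sweeps out a 2-dimensional trace, and its intersection with a fixed disjoint knot is a codimension-zero phenomenon in a 1-parameter family (equivalently, the locus of embeddings meeting a fixed knot has codimension one in the space of embeddings), so generic isotopies \emph{do} cross other components at isolated times. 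A crossing through $K^{-\delta}(-1)$ or through a component of $L$ changes the surgery link, and hence potentially the surgered manifold, so this step cannot be waved away. One can in fact exhibit a specific isotopy exchanging the positive Reeb pushoff for the negative one in the complement of $K\cup L$ (both are longitudes of the same contact-framing slope on $\partial N(K)$), but that requires an argument you have not supplied.

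The paper sidesteps the issue entirely by a different placement of the canceling pair: it realizes the ambient manifold as $(\Sigma\times[-1-\epsilon,1+\epsilon])/\!\sim$ with $(x,1+\epsilon)\sim(\Phi(x),-1-\epsilon)$ and $L\subset\Sigma\times[-1+\epsilon,1-\epsilon]$, and inserts the pair $K^{-\delta}(-1)\cup K^{\delta}(-1-\epsilon/2)$, i.e.\ with the pushoff \emph{below} level $-1$. The negative Reeb flow applied to $K^{\delta}(-1-\epsilon/2)$ then traverses only the levels $[-1-\epsilon,-1-\epsilon/2]\cup[1,1+\epsilon]$, which are disjoint from $\{-1\}$ and from $[-1+\epsilon,1-\epsilon]$, so the isotopy is automatically supported in the complement of $L\cup K^{-\delta}(-1)$ and lands on $(\Phi^{-1}(K))^{\delta}(1)$. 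If you replace your choice of $-1+\eta$ by $-1-\eta$ your argument goes through without the problematic preliminary isotopy.
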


\begin{proof}
Let $\AOB$ be the open book of $\Sthree$ used to define $L_{(\Sigma,K^{\delta})}$.  This manifold can be described by
\begin{equation*}
\begin{gathered}
(\Sigma\times[-1-\epsilon,1+\epsilon])/\sim\quad\text{where}\\
(x,1+\epsilon)\sim(\Phi(x),-1-\epsilon)\;\forall x\in\Sigma\quad\text{and}\quad (x,t)\sim(x,t')\;\forall x\in\partial\Sigma;\;t,t'\in[-1-\epsilon,1+\epsilon]
\end{gathered}
\end{equation*}
where $L\subset \Sigma\times[-1+\epsilon,1-\epsilon]$.  Consider the canceling pair $K^{-\delta}(-1)\cup K^{\delta}(-1-\epsilon/2)$.  Applying the flow of the negative Reeb vector field of a contact form compatible with $\AOB$, we can isotop $K^{\delta}(-1-\epsilon/2)$ to live in the page $\Sigma\times \{1\}$ in the complement of $L\cup K(-1)$.  The image of $K^{\delta}(-1-\epsilon/2)$ under this isotopy will be $(\Phi^{-1}(K))^{\delta}(1)$.
\end{proof}

This completes Step 1.\\

\noindent\textbf{Step 2.}
Possibly after a Legendrian isotopy, we may assume that $X\subset\{x<0\}\subset\mathbb{R}^{3}$ and $Y\subset\{ x>0\}\subset\mathbb{R}^{3}$ and consider $X$ and $Y$ as being simultaneously embedded in $\mathbb{R}^{3}$.  Apply Algorithm 2 to $X\cup Y$ to obtain a Legendrian graph $G$ whose ribbon $\Sigma$ contains $X\cup Y$, and is the page of an open book $(\Sigma,\Phi_{\Sigma})$ supporting $\Sthree$.  Here $\Phi_{\Sigma}$ is determined by Theorem \ref{Thm:Mono}.  By Lemma \ref{Thm:ConnectedBinding} we may assume that the boundary of $\Sigma$ is connected.  Then $\Mxi$ is supported by the open books
\begin{equation*}
(\Sigma,\Phi_{\Sigma}\circ D_{X})\quad\text{and}\quad (\Sigma,\Phi_{\Sigma}\circ D_{Y}).
\end{equation*}

By Theorem \ref{Thm:GirCor}, these open books can be positively stabilized some number of times so that their monodromies will be conjugate.  We will assume that these stabilized open books have connected binding.  More precisely, there is a surface $\widetilde{\Sigma}$ with a single boundary component, two collections  $\{ \alpha_{j}\}_{1}^{2g}$ and $\{ \beta_{j} \}_{1}^{2g}$ of Legendrian realizable, simple, closed curves on $\widetilde{\Sigma}$, and a map $\Psi\in MCG(\widetilde{\Sigma},\partial\widetilde{\Sigma})$ such that
\begin{equation}\label{Eq:Conjugate}
\Phi_{\Sigma}\circ D_{X} \circ(\prod_{1}^{2g}D^{+}_{\alpha_{j}})= \Psi\circ\Phi_{\Sigma}\circ D_{Y} \circ(\prod_{1}^{2g}D^{+}_{\beta_{j}})\circ\Psi^{-1}.
\end{equation}
Here $\Phi_{\Sigma}$, $D_{X}$, and $D_{Y}$ extend to elements of $MCG(\widetilde{\Sigma},\partial\widetilde{\Sigma})$ via the inclusion $\Sigma\rightarrow\widetilde{\Sigma}$.

As both $\Sigma$ and $\widetilde{\Sigma}$ have connected boundary, we can write $\widetilde{\Sigma}$ as a boundary connected sum of $\Sigma$ and another surface $\Sigma'$ which has the topological type of a once-punctured genus $g$ surface.  We will embed $\widetilde{\Sigma}$ into $\Sthree$ so that it is the page of a supporting open book decomposition and extends the inclusion $\Sigma\subset\Sthree$.

Let $B$ be a Darboux ball in the complement of the 2-skeleton of the contact cell decomposition of $\Sthree$ associated to the Legendrian graph $G$ and open book $(\Sigma,\Phi_{\Sigma})$.  Embed $\Sigma'$ into $B$ via Algorithm 1.  Then $\Sigma'$ is the page of an open book $(\Sigma', \Phi_{\Sigma'})$ supporting $\Sthree$. Connect $\Sigma$ to $\Sigma'$ with the ribbon of a Legendrian arc as in Step 2 of Algorithm 2.  This gives rise to an embedding of $\widetilde{\Sigma}$ into $\Sthree$ as the page of a supporting open book $(\widetilde{\Sigma},\Phi_{\Sigma}\circ\Phi_{\Sigma'})$.

Step 2 of Algorithm 1 provides a surgery link $L\subset\Sigma'\times[-1,0]$ for which $D_{L}=\Phi_{\Sigma'}^{-1}$.  As $L$ is disjoint from $X\cup Y$ when projected to $\widetilde{\Sigma}$ we can Legendrian-isotop $L$ in $\Sigma'\times[-1,1]$ so that $L\subset\Sigma'\times[1/2,1]$ and the projection to $\Sigma'$ is unchanged.  Note that $D_{L}$ commutes with $D_{X}$ and $D_{Y}$.  Performing a similar isotopy we can move the link $X\cup Y$ from $\Sigma\times\{ 0\}$ to $\Sigma\times\{ 1\}$.

Now we can find surgery links $A$ and $B$ in $\widetilde{\Sigma}\times[-1,0]$ in the complement of $L\cup X\cup Y$ for which
\be
\item $D_{A}=\prod_{1}^{2g}D^{+}_{\alpha_{j}}$ and $D_{B}=\prod_{1}^{2g}D^{+}_{\beta_{j}}$,
\item the components of the link $A$ projected to $\widetilde{\Sigma}$ give the $\alpha_{j}$, and
\item the components of the link $B$ projected to $\widetilde{\Sigma}$ give the $\beta_{j}$.
\ee
Indeed, we can define
\begin{equation*}
\begin{gathered}
A=\alpha_{2g}^{-1}(0)\cup\alpha_{2g-1}^{-1}(\frac{-1}{2g})\cup\cdots\cup\alpha_{1}^{-1}(-1),\quad\text{and}\\
B=\beta_{2g}^{-1}(0)\cup\beta_{2g-1}^{-1}(\frac{-1}{2g})\cup\cdots\cup\beta_{1}^{-1}(-1).
\end{gathered}
\end{equation*}

It is easy to check by induction on $g$ that the embeddings of $\Sigma'$ can be chosen so that the surgery links $X\cup A\cup L$ and $Y\cup B\cup L$ account for $2g$ positive stabilizations - as described in Step 1 - of the surgery diagrams $X$ and $Y$.  Modifying the embedding of $\Sigma'$ may conjugate the mapping $\Psi$ of Equation \ref{Eq:Conjugate} but will leave $X$ and $Y$ unaffected.  Define
\begin{equation*}
\widetilde{X} := X\cup A\cup L\quad\text{and}\quad \widetilde{Y}:=Y\cup B\cup L.
\end{equation*}
Then $\widetilde{X}$ and $\widetilde{Y}$ are obtained from $X$ and $Y$, respectively from a sequence of ribbon moves and Legendrian isotopies.  Moreover, $\widetilde{X},\widetilde{Y}\subset\widetilde{\Sigma}\times[-1,1]$ and
\begin{equation*}
D_{\widetilde{X}}= \Phi_{\Sigma'}^{-1}\circ D_{X} \circ(\prod_{1}^{2g}D^{+}_{\alpha_{j}}),\quad\text{and}\quad D_{\widetilde{Y}}= \Phi_{\Sigma'}^{-1}\circ D_{Y} \circ(\prod_{1}^{2g}D^{+}_{\beta_{j}}).
\end{equation*}

This concludes Step 2 of the proof.\\

\noindent\textbf{Step 3.}
To finish the proof, we must show that $\widetilde{X}$ and $\widetilde{Y}$ are related by a sequence of ribbon moves and Legendrian isotopies.  By Lemma \ref{Lemma:Conjugate}, it suffices to show that we can modify $\widetilde{X}$ by a sequence of ``surgery conjugations'' as in Definition \ref{Def:Conjugate} to obtain a surgery link which is $\widetilde{\Sigma}$ equivalent to $\widetilde{Y}$.  This is what we will show.

Consider a collection of curves $\{\gamma_{j}\}_{1}^{k}$ on $\widetilde{\Sigma}$ which represent Lickorish generators of the mapping class group of $\widetilde{\Sigma}$.  Then each $\gamma_{j}$ is Legendrian realizable.  Express the mapping $\Psi$ of Equation \ref{Eq:Conjugate} as a product of Dehn twists on the $\gamma_{j}$:
\begin{equation*}
\Psi = D^{s_{1}}_{\zeta_{1}} \circ \cdots \circ D^{s_{l}}_{\zeta_{l}}
\end{equation*}
for some $l\in\mathbb{N}$, $s_{j}\in\{+,-\}$, and $\zeta_{j}\in\{\gamma_{j}\}$.  Now inductively define the surgery links
\begin{equation*}
\widetilde{X}_{0}:=\widetilde{X},\quad \widetilde{X}_{j}:=(\widetilde{X}_{j-1})_{(\widetilde{\Sigma},\zeta_{j}^{-s_{j}})}
\end{equation*}
using the notation of Definition \ref{Def:Conjugate}.  Then
\begin{equation*}
D_{\widetilde{X}_{l}} = \Psi^{-1}\circ \Phi_{\Sigma}\circ D_{X} \circ(\prod_{1}^{2g}D^{+}_{\alpha_{j}})\circ \Psi
\end{equation*}
implying that $\widetilde{X}_{l}$ is $\widetilde{\Sigma}$ equivalent to $\widetilde{Y}$ by Equation \ref{Eq:Conjugate}.
\end{proof}

\noindent\emph{Acknowledgements.}  I would like to thank my advisor Ko Honda for his support and guidance during the completion of this project, Patrick Massot for helping to correct some erroneous definitions -- in particular, Definition \ref{Def:ContactCell}(2) -- and attributions in a previous version of this paper, as well as an anonymous referee for their useful remarks.


\end{document}